\theoremstyle{plain}
\newtheorem{prop}{Proposition}[section]
\newtheorem{lem}{Lemma}[section] \newtheorem{cor}{Corollary}[section]
\newtheorem{defi}{Definition}[section]
\theoremstyle{remark}
\newtheorem{rmk}{Remark}
\newcommand {\R} {\mathbb{R}} 
 \newcommand {\N} {\mathbb{N}}
\newcommand {\p} {\partial}
\newcommand {\dt} {\partial_t}
\newcommand {\dr} {\partial_r}
\newcommand {\ve} {v^{\epsilon}}
\newcommand {\D} {\Delta}
\newcommand {\supp} {\text{supp}}
\DeclarePairedDelimiter{\floor}{\lfloor}{\rfloor}
\DeclareMathOperator {\dist} {dist}
\DeclareMathOperator {\Ree} {Re}
\title[The Variable Coefficient Thin Obstacle Problem]{The Variable Coefficient Thin Obstacle Problem: Carleman Inequalities}
\author{Herbert Koch}
\author{Angkana R\"uland }
\author{Wenhui Shi}
\address{
Mathematisches Institut, Universit\"at Bonn, Endenicher Allee 60, 53115 Bonn, Germany }
\email{koch@math.uni-bonn.de}
\address{
Mathematical Institute of the University of Oxford, Andrew Wiles Building, Radcliffe Observatory Quarter, Woodstock Road, OX2 6GG Oxford, United Kingdom }
\email{ruland@maths.ox.ac.uk}
\address{
Mathematisches Institut, Universit\"at Bonn, Endenicher Allee 64, 53115 Bonn, Germany  }
\email{wenhui.shi@hcm.uni-bonn.de}
\begin{document}

\begin{abstract}
In this article we present a new strategy of addressing the (variable coefficient) thin obstacle problem. Our approach is based on a (variable coefficient) Carleman estimate. This yields semi-continuity of the vanishing order, lower and uniform upper growth bounds of solutions and sufficient compactness properties in order to carry out a blow-up procedure. Moreover, the Carleman estimate implies the existence of homogeneous blow-up limits along certain sequences and ultimately leads to an almost optimal regularity statement. As it is a very robust tool, it allows us to consider the problem in the setting of Sobolev metrics, i.e. the coefficients are only $W^{1,p}$ regular for some $p>n+1$.\\
These results provide the basis for our further analysis of the free boundary, the optimal ($C^{1,1/2}$-) regularity of solutions and a first order asymptotic expansion of solutions at the regular free boundary which is carried out in a follow-up article, \cite{KRSI}, in the framework of $W^{1,p}$, $p>2(n+1)$, regular coefficients. 
\end{abstract}

 \subjclass[2010]{Primary 35R35}

\keywords{Variable coefficient Signorini problem, variable coefficient thin obstacle problem, thin free boundary, Carleman estimates}

\thanks{A.R. acknowledges that the research leading to these results has received funding from the European Research Council under the European Union's Seventh Framework Programme (FP7/2007-2013) / ERC grant agreement no 291053 and a Junior Research Fellowship at Christ Church.
W.S. is supported by the Hausdorff Center of Mathematics.}
\maketitle

\tableofcontents

\section{Introduction}

In this article we present a new, very robust strategy of analyzing solutions of the (variable coefficient) \emph{thin obstacle} or \emph{Signorini problem}. Let $a^{ij}: B_1^+ \rightarrow \R^{(n+1)\times (n+1)}_{sym}$ be a symmetric, uniformly elliptic tensor field which is $W^{1,p}$, $p>n+1$, regular and let $B_1^+:=\{x\in B_1\subset \R^{n+1}| \ x_{n+1}\geq 0\}$ denote the upper half-ball. Then consider local minimizers of the constrained Dirichlet energy:
\begin{align*}
J(w) = \int\limits_{B_1^+} a^{ij} (\p_i w) (\p_j w) dx,
\end{align*}
where we use the Einstein summation convention and assume that
\begin{align*}
 w\in \mathcal{K}:=\{v\in H^1(B_1^+)| \ v \geq 0 \mbox{ on } B_1':= B_1^+\cap \{x_{n+1}=0\} \}.
\end{align*}
Thus, local minimizers of this variational problem solve a uniformly elliptic divergence form equation in the interior of the upper half-ball, on this set they are ``free''. However, on the codimension one surface $B_1'$ they obey the convex constraint $w\geq 0$. In this sense the obstacle is ``thin''.\\

Due to results of Caffarelli \cite{Ca79}, Kinderlehrer \cite{Ki81} (who work in the setting with $C^{1,\gamma}$ coefficients) and Uraltseva \cite{U87}, local minimizers are $C^{1,\alpha}$ regular for some $\alpha \in (0,1/2]$ and satisfy a second order elliptic equation with \emph{Signorini} (or \emph{complementary}) boundary conditions: 
\begin{equation}
\begin{split}
\label{eq:varcoef'}
\p_i  a^{ij} \p_j  w & = 0 \mbox{ in } B^+_{1}, \\
 w \geq 0,   -a^{n+1,j} \p_{j}w \geq 0 \mbox{ and } w ( a^{n+1,j} \p_j w) &= 0 \mbox{ on } B_{1}'.
\end{split}
\end{equation}
In the sequel we study (\ref{eq:varcoef'}) with the aim of obtaining optimal regularity estimates for its solutions as well as a better understanding of the free boundary, i.e. the set $\Gamma_w = \partial_{B'_1} \{x\in B_1'| \ w(x)>0 \}$, which separates the coincidence set, $\Lambda_w:=\{x\in B_1'| \ w(x)=0\}$, in which the solution coincides with the obstacle $w=0$, from the positivity set, $\Omega_w:=\{x\in B_1'| \ w(x)>0\}$. 

\subsection{Main results}
In studying the variable coefficient thin obstacle problem (\ref{eq:varcoef'}) in a low regularity framework (we only assume that the coefficients $a^{ij}$ are in an appropriate Sobolev class), we introduce \emph{Carleman estimates} as a key ingredient of obtaining first information on the solutions. Here the Carleman estimate replaces a variable coefficient frequency function approach. \\

Carleman estimates have the advantage of being very flexible with respect to perturbations:
After deriving a constant coefficient Carleman inequality, it is often possible to deduce a variable coefficient analogue by perturbative techniques. Comparing the Carleman estimate with monotonicity arguments, we note that monotonicity of the frequency function (for harmonic functions) is equivalent to logarithmic convexity of the $L^2$ norm. It depends on rigid calculations. In contrast, Carleman inequalities provide relaxed convexity statements (c.f. Corollary \ref{cor:consequence_Carl}). The conjugated equation is central (c.f. (\ref{eq:op})), and one stays in the more flexible context of PDEs.\\

Making use of the Carleman estimate, we obtain the upper semi-continuity of the vanishing order (c.f. Proposition \ref{prop:semi_cont}), lower and uniform upper growth bounds for solutions (c.f. Corollary \ref{cor:growth1} and Lemma \ref{lem:growthimp}),
compactness of $L^2$ normalized blow-up sequences (c.f. Proposition \ref{prop:doubling}). 
Moreover, we obtain that along certain sequences, the blow-up limits are homogeneous (c.f. Proposition \ref{lem:homo}). This in particular allows us to classify the blow-up limits when the vanishing order is less than $2$. Combining this information then allows us to prove the following almost optimal regularity result:

\begin{prop}[Almost optimal regularity]
\label{prop:almost_opti1}
Let $a^{ij}: B_1^+ \rightarrow \R^{(n+1)\times (n+1)}_{sym}$ be a uniformly elliptic, symmetric $W^{1,p}$ tensor field with $p \in (n+1,\infty]$.
Assume that $w$ is a solution of the variable coefficient thin obstacle problem (\ref{eq:varcoef'}). Then, with $\gamma = 1 - \frac{n+1}{p}$,
\begin{align*}
&|\nabla w(x) - \nabla w(y)| \\
&\leq \left\{ 
\begin{array}{ll}
C(\gamma) \left\| w \right\|_{L^2(B_1^+)} |x-y|^{\gamma} &\mbox{ for } p \in (n+1,2(n+1)),\\
C\left\| w \right\|_{L^2(B_1^+)} |x-y|^{1/2}\ln(|x-y|)^{2} & \mbox{ for } p \in [ 2(n+1),\infty],
\end{array}
\right.
\end{align*}
for all $x,y \in B_{\frac{1}{2}}^+$. Apart from the dependence on $\gamma$ for the first case, the constants $C$ are also functions of $\left\|a^{ij}\right\|_{W^{1,p}(B_1^+)}, p, n$.
\end{prop}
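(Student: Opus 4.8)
The plan is to patch two pieces of information whose interplay fixes the H\"older exponent $\min\{\gamma,\tfrac12\}$: \emph{interior} elliptic regularity, which away from $\Gamma_w$ produces $\nabla w\in C^{0,\gamma}$ and nothing better, and a \emph{uniform} growth bound of order $\tfrac32$ at points of $\Gamma_w$ (with a logarithmic defect), which is what the Carleman machinery collected in Propositions \ref{prop:semi_cont}, \ref{lem:homo}, \ref{prop:doubling} and Lemma \ref{lem:growthimp} delivers; the logarithmic correction comes only from the latter and is invisible when $p<2(n+1)$, since there $\tfrac32$ exceeds $1+\gamma$ by a definite amount. Since the problem is local and invariant under the normalisations used earlier, I assume the coefficients to be in that normalised form (so that $e_{n+1}$ is conormal along $B_1'$) and I use the known $C^{1,\alpha}$-regularity of $w$ freely. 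On $\Omega_w$ the Signorini conditions in \eqref{eq:varcoef'} reduce to the homogeneous conormal condition, so the even reflection of $w$ across $\{x_{n+1}=0\}$ solves a uniformly elliptic divergence-form equation with coefficients again in $W^{1,p}$ and of controlled norm; on the interior of $\Lambda_w$ one has $w\equiv0$ and the odd reflection does the same. For $p>n+1$, $L^p$ Calder\'on--Zygmund theory together with $W^{2,p}\hookrightarrow C^{1,\gamma}$, $\gamma=1-\tfrac{n+1}{p}$, then give $\nabla w\in C^{0,\gamma}_{loc}$ off $\Gamma_w$ with the scale-invariant bounds
\begin{align*}
\|\nabla w\|_{L^\infty(B_{\rho/2}(z))}\le C\rho^{-1}\|w\|_{L^\infty(B_\rho(z))}, \qquad [\nabla w]_{C^{0,\gamma}(B_{\rho/2}(z))}\le C\rho^{-1-\gamma}\|w\|_{L^\infty(B_\rho(z))},
\end{align*}
valid on every ball $B_\rho(z)$ whose reflection lies in $B_1$ and avoids $\Gamma_w$, with $C=C(n,p,\|a^{ij}\|_{W^{1,p}})$.

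\emph{Growth at the free boundary.} The Carleman inequality gives upper semi-continuity of the vanishing order (Proposition \ref{prop:semi_cont}), compactness of $L^2$-normalised blow-ups (Proposition \ref{prop:doubling}) and homogeneity of blow-up limits along suitable radii (Proposition \ref{lem:homo}); such a limit solves the constant-coefficient (Laplace) thin obstacle problem obtained by freezing $a^{ij}$ at the base point, and a homogeneous global solution of homogeneity $\kappa<2$ must have $\kappa\in\{1,\tfrac32\}$, with $\kappa=1$ incompatible with the base point lying on $\Gamma_w$ (the only homogeneous degree-one solutions are $-c\,x_{n+1}$, $c\ge0$, whose free boundary is empty). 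Hence the vanishing order is $\ge\tfrac32$ on $\Gamma_w$, and in its quantitative, uniform form (Lemma \ref{lem:growthimp}, upgraded from $L^2$ to $L^\infty$ by a De Giorgi--Nash--Moser bound for the reflected equation) this reads $\|w\|_{L^\infty(B_r^+(x_0))}\le C r^{3/2}(\ln(1/r))^{2}\|w\|_{L^2(B_1^+)}$ for all $x_0\in\Gamma_w$ and $r$ small, with $C=C(n,p,\|a^{ij}\|_{W^{1,p}})$.

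\emph{Patching.} Combining the two: for $z\in B_{1/2}^+$ with $d_z:=\dist(z,\Gamma_w)$ small, apply the growth bound at the point of $\Gamma_w$ nearest $z$ on $B_{d_z/2}(z)$ — where $|w|\le C d_z^{3/2}(\ln(1/d_z))^2\|w\|_{L^2(B_1^+)}$ — and then the rescaled interior estimates to obtain $|\nabla w(z)|\le C d_z^{1/2}(\ln(1/d_z))^2\|w\|_{L^2(B_1^+)}$ and $[\nabla w]_{C^{0,\gamma}(B_{d_z/4}(z))}\le C d_z^{1/2-\gamma}(\ln(1/d_z))^2\|w\|_{L^2(B_1^+)}$, while $\nabla w\equiv0$ on $\Gamma_w$ and the region $d_z\gtrsim1$ is handled by plain interior estimates. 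Given $x,y\in B_{1/2}^+$, set $\rho=|x-y|$ and assume $d_x\ge d_y$. If $\rho\ge d_x/4$, the two pointwise gradient bounds together with the monotonicity of $t\mapsto t^{1/2}(\ln(1/t))^2$ near $0$ give $|\nabla w(x)-\nabla w(y)|\le C\rho^{1/2}(\ln(1/\rho))^2\|w\|_{L^2(B_1^+)}$. If $\rho<d_x/4$, then $x,y\in B_{d_x/4}(x)$ and the H\"older bound gives $|\nabla w(x)-\nabla w(y)|\le C d_x^{1/2-\gamma}(\ln(1/d_x))^2\rho^{\gamma}\|w\|_{L^2(B_1^+)}$; for $p\ge 2(n+1)$ one has $\gamma\ge\tfrac12$, hence $d_x^{1/2-\gamma}\le\rho^{1/2-\gamma}$ (since $\rho<d_x$) and $(\ln(1/d_x))^2\le C(\ln(1/\rho))^2$, producing the claimed $|x-y|^{1/2}(\ln(1/|x-y|))^2$ bound; for $p<2(n+1)$ one has $\gamma<\tfrac12$, so $t\mapsto t^{1/2-\gamma}(\ln(1/t))^2$ is bounded on $(0,\tfrac14]$ and one gets the clean $|x-y|^\gamma$ bound. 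This is the assertion.

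\emph{Main obstacle.} The genuine work is the quantitative uniform growth bound invoked in the second step: converting ``vanishing order $\ge\tfrac32$'' into an honest estimate with only a power-of-logarithm loss (rather than an $r^{-\epsilon}$ loss) holding uniformly over all $x_0\in\Gamma_w$, and tracking through the iteration of the Carleman inequality exactly which power of $\ln(1/r)$ survives — this is where the exponent $2$ originates. A secondary, purely technical point is to ensure scale invariance of all interior estimates and uniform control of the reflected coefficients' $W^{1,p}$-norms, so that the various rescalings in the last two steps do not accumulate constants.
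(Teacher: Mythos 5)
Your proposal is correct and follows essentially the same route as the paper's proof of Proposition \ref{prop:almost}: derive a uniform $r^{3/2}(\ln(1/r))^2$ growth bound at the free boundary from Lemma \ref{lem:growthimp} and the classification in Corollary \ref{cor:kappa}, reflect the solution evenly/oddly away from $\Gamma_w$ to get scale-invariant interior $C^{1,\gamma}$ estimates, and patch via a dichotomy on $|x-y|$ versus $\dist(x,\Gamma_w)$; the only cosmetic differences are that you rule out $\kappa=1$ by direct classification of degree-one homogeneous solutions rather than via the $C^{1,\alpha}$ continuity argument in the remark following Proposition \ref{prop:homo2}, and you merge the paper's three-case dichotomy into two cases, both of which are entirely harmless.
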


In particular, this improves the regularity estimates of Caffarelli \cite{Ca79}, Kinderlehrer \cite{Ki81} and Uraltseva \cite{U87}, by coming logarithmically close to the expected optimal threshold of $C^{1,1/2}$ regularity if $p\geq 2(n+1)$ (or even attaining the optimal $C^{1,\gamma}$ regularity if $p\in(n+1,2(n+1))$).\\

The Carleman estimate however permits us to obtain more information: The existence of homogeneous blow-up solutions directly allows us to classify the lowest possible vanishing rate at free boundary points without invoking the Friedland-Hayman inequality \cite{FH76} (c.f. the proof of Proposition \ref{prop:homo2}). Since blow-up solutions satisfy the constant coefficient thin obstacle problem,
it is not surprising that this lowest blow-up homogeneity coincides with the lowest possible homogeneity of solutions of the constant coefficient thin obstacle problem, $\kappa = 3/2$. As in the case of the constant coefficient problem, we show that there is a gap to the next possible blow-up homogeneity which is, $\kappa=2$.\\

In our follow-up article \cite{KRSI}, we use this and the upper semi-continuity of the mapping $\Gamma_w \ni x \mapsto \kappa_x$ to separate the free boundary into
$$\Gamma_w = \Gamma_{3/2}(w)\cup \bigcup\limits_{\kappa \geq 2}\Gamma_{\kappa}(w),$$  
where $\Gamma_{3/2}(w)$ is a relatively open set of the free boundary, the so-called \emph{regular} free boundary and $\bigcup\limits_{\kappa \geq 2}\Gamma_{\kappa}(w)$ consists of all free boundary points which have a higher order of vanishing. Working in the framework of $W^{1,p}$ metrics with $p\in(2(n+1),\infty]$, we prove that the regular free boundary is locally a $C^{1,\alpha}$ graph for some $\alpha\in(0,1)$ in \cite{KRSI}. This then allows us to improve the almost optimal regularity result from Proposition \ref{prop:almost_opti1} to an optimal regularity result. Moreover, we identify the leading term in the asymptotic expansion of solutions of (\ref{eq:varcoef'}) at the regular free boundary and provide explicit error bounds.

\subsection{Context and literature}
Let us comment on the context of our problem: Apart from the (constant coefficient) two-dimensional problem, which was completely solved by \cite{Le72} and \cite{Ri78}, and despite various impressive results on the higher dimensional problem \cite{Ca79}, \cite{Fr77}, \cite{Ki81}, \cite{U87}, the optimal regularity of the thin obstacle problem was only resolved relatively recently by Caffarelli et al. \cite{AC06}, \cite{ACS08}. In these seminal papers a frequency function was introduced as the key tool in studying solutions of the thin obstacle problem. \\

Following this there has been great progress in various directions for the \emph{constant} coefficient problem: Relying on frequency function methods, the ground breaking papers of Caffarelli et al. \cite{AC06}, \cite{ACS08} establish the optimal regularity of solutions, as well as the $C^{1,\alpha}$ regularity of the regular free boundary. This has been further extended to the related obstacle problem for the fractional Laplacian \cite{Si}, \cite{CSS} and parabolic analogues \cite{DGPT}. Relying neither on frequency functions nor on comparison arguments, Andersson \cite{An} has shown that similar results also hold for the full Lam\'e system. Recently, Koch, Petrosyan and Shi \cite{KPS} as well as De Silva and Savin \cite{DSS14} have proved smoothness (\cite{KPS} proved analyticity) of the regular free boundary. Moreover, Garofalo and Petrosyan \cite{GP09} give a structure theorem for the singular set of the thin obstacle problem. \\

The \emph{variable} coefficient thin obstacle problems is much less understood. Here the best regularity result in the literature in the setting of Sobolev regularity is given by Uraltseva's $C^{1,\alpha}$ regularity result: In \cite{U87} she proves that for the thin obstacle problem with a $W^{1,p}$, $p>n+1$ metric, $a^{ij}$, there exists a Hölder coefficient $\alpha \in (0,1/2]$, depending only on the Sobolev exponent $p$ and the ellipticity constants of $a^{ij}$, such that the corresponding solution is in $C^{1,\alpha}$ (c.f. also \cite{U89}). Recently there has been important progress in deriving an improved understanding in the low regularity setting: In \cite{G09} Guillen derives \emph{optimal} regularity results for solutions of the variable coefficient thin obstacle problem. He works in the setting of $C^{1,\gamma}$, $\gamma>0$ coefficients. This was generalized by Garofalo and Smit Vega Garcia \cite{GSVG14} to Lipschitz continuous coefficients by using the frequency function approach. In a recent work in progress Garofalo, Petrosyan and Smit Vega Garcia  \cite{GPSVG15} further extend this result and prove Hölder continuity of the regular free boundary.

\subsection{Difficulties and strategy}
Let us elaborate a bit further on the main difficulties in investigating the variable coefficient thin obstacle problem: The central problem that has to be overcome and that is reflected in all stages of our arguments is the low regularity of the metric. This requires robust tools. \\

In this first part of our discussion of the variable coefficient thin obstacle problem, Carleman estimates, which are well-known from unique continuation and the study of inverse problems \cite{Carl}, \cite{JK}, \cite{KT1}, \cite{I3}, \cite{Rue14}, are used to handle this low regularity setting: Although they are usually employed in the setting of Lipschitz metrics (as for instance the unique continuation principle in general fails for less regular coefficients), it is possible to extend these to the low regularity framework we are interested in.  \\

This allows us to carry out a blow-up argument and exploit information from the constant coefficient setting. For this we argue in two steps:
\begin{itemize}
\item \emph{A Carleman estimate for solutions of the variable coefficient thin obstacle problem (\ref{eq:varcoef}).} This yields sufficiently strong compactness properties in order to carry out a blow-up procedure at the free boundary points. In particular, doubling inequalities (c.f. Proposition \ref{prop:doubling}) are immediate consequences of the Carleman estimate.
\item \emph{A blow-up procedure.} The good compactness properties deduced in the previous step permit to carry out a blow-up procedure with a non-trivial limit satisfying a \emph{constant} coefficient thin obstacle problem (c.f. Proposition \ref{prop:blowup}). Thus, (for an appropriately normalized metric $a^{ij}$) the blown-up solution is of the form:
\begin{equation}
\begin{split}
\label{eq:constcoef}
\D w & = 0\mbox{ in } B^{+}_{1}, \\
w\geq 0,  -\p_{{n+1}}w\geq 0 \mbox{ and } w\p_{{n+1}}w&= 0 \mbox{ on } B_{1}'.
\end{split}
\end{equation}
However, the Carleman estimate yields further information: It is possible to show that the there are blow-up sequences such that the blow-up limits are homogeneous solutions of (\ref{eq:constcoef}). Moreover, the homogeneity is given by the order of vanishing (c.f. Proposition \ref{lem:homo}). For a solution of the variable coefficient thin obstacle problem this then allows to exploit the existing information on solutions of the constant coefficient thin obstacle problem and thus, for instance, obtain an almost optimal regularity result (c.f. Proposition \ref{prop:almost}).
\end{itemize}

\subsection{Organization of the paper}
Let us finally comment on the structure of the remainder of the article: In the next section, we first briefly recall auxiliary results (c.f. Proposition \ref{prop:change}) and then introduce our notational conventions in Section \ref{sec:not}. Following this, Section \ref{sec:Carl} is dedicated to our Carleman estimate, Proposition \ref{prop:varmet}. In particular, we derive an important corollary, Corollary \ref{cor:consequence_Carl}, from it. In Section \ref{sec:reg} we then deduce crucial consequences of the Carleman estimate: We derive compactness properties (c.f. Proposition \ref{prop:doubling}), carry out a blow-up procedure (c.f. Proposition \ref{prop:blowup}) and prove the existence of homogeneous of blow-up solutions (c.f. Proposition \ref{lem:homo}). This is then applied in proving the first (almost optimal) regularity result, Proposition \ref{prop:almost}. \\

In our second article dealing with the thin obstacle problem we then use these results to further analyze the regular free boundary, to derive optimal regularity estimates and to obtain a first order expansion of solutions at the regular free boundary.

\section{Preliminaries}

\subsection{Auxiliary results and assumptions}
\label{sec:aux}

In the sequel we recall certain auxiliary results which allow us to reformulate and simplify the problem (\ref{eq:varcoef'}) in a particularly useful way. Moreover, we collect all our assumptions on the involved quantities.\\

We start by recalling a (slight modification of a) result due to Uraltseva \cite{U89}, p.1183 which allows to simplify the complementary boundary conditions:
\begin{prop}
\label{prop:change}
Let $a^{ij}:B_{1}^+ \rightarrow \R^{(n+1)\times (n+1)}_{sym}$ be a uniformly elliptic $W^{1,p}$, $p > n+1$, tensor field and $w\in H^1(B_1^+)$ be a solution to \eqref{eq:varcoef'}. Then for each point $x\in B_{\frac{1}{2}}'$ there exist a neighborhood, U(x), and a $W^{2,p}$ diffeomorphism
\begin{align*}
T:U(x)\cap B_{1}^+ \rightarrow V\cap B_{1}^+, \ x \mapsto T(x)=:y,
\end{align*}
such that in the new coordinates $\tilde{w}(y):= w(T^{-1}y)$ (weakly) solves a new thin obstacle problem with
\begin{equation}
\label{eq:varcoef2}
\begin{split}
\p_k b^{k\ell}(y)\p_{\ell} \tilde{w}=0 \mbox{ in } V\cap B_{1}^+,\\
\p_{n+1} \tilde{w} \leq 0, \ \tilde{w} \geq 0, \ \tilde{w}(\p_{n+1} \tilde{w} ) = 0 \mbox{ on } V\cap B_{1}',
\end{split}
\end{equation}
where $B(y)=(b^{k\ell}(y))=|\det (DT(x))|^{-1}(DT(x))^tA(x)DT(x)\big |_{x=T^{-1}(y)}$ satisfies 
\begin{align*}
b^{n+1,\ell}=0 \mbox{ on } B'_1 \mbox{ for all } \ell\in\{1,\ldots,n\},
\end{align*}
and remains a uniformly elliptic $W^{1,p}$ tensor field.
\end{prop}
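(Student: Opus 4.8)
The plan is to construct the diffeomorphism $T$ explicitly as a "boundary straightening" of the conormal vector field associated to $A$. Fix $x_0 \in B_{1/2}'$. Near $x_0$ the complementary condition in \eqref{eq:varcoef'} involves the conormal derivative $-a^{n+1,j}\p_j w$ on $B_1'$; the goal is to flatten the conormal direction to the vertical direction $e_{n+1}$ while keeping $B_1'$ fixed. Concretely, I would write points as $x = (x', x_{n+1})$ and seek $T$ of the form $T(x) = (x', \psi(x))$ (or its inverse of this form), where $\psi$ is chosen so that in the new variables the mixed coefficients $b^{n+1,\ell}$, $\ell = 1,\dots,n$, vanish on $\{y_{n+1}=0\}$. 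Since $\psi$ must solve (or its level sets must be integral manifolds of) a first-order PDE with coefficients built from $a^{ij}$, and $a^{ij} \in W^{1,p}$ with $p > n+1$, the relevant ODE/PDE theory (Cauchy–Lipschitz for the flow, or direct solution of a transport equation) applies because $W^{1,p} \hookrightarrow C^{0,\gamma}$ with $\gamma = 1 - \frac{n+1}{p} > 0$; this gives $\psi \in W^{2,p}$ locally, hence $T$ is a $W^{2,p}$ diffeomorphism on a small enough neighborhood $U(x_0)$.

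The key steps, in order, are: (i) record the transformation rule for divergence-form operators under a $W^{2,p}$ change of variables, namely that $\p_i a^{ij}\p_j w = 0$ becomes $\p_k b^{k\ell}\p_\ell \tilde w = 0$ with $B = |\det DT|^{-1} (DT)^t A\, (DT)\circ T^{-1}$, and check that this identity holds weakly at the low regularity in play (this is the content one borrows from Uraltseva and the "auxiliary results" of Section~\ref{sec:aux}); (ii) verify that $T$ as constructed preserves the hyperplane $\{x_{n+1}=0\}$, so that $B_1'$ is mapped into a piece of $\{y_{n+1}=0\}$ and the obstacle constraint $w \geq 0$ on $B_1'$ transfers to $\tilde w \geq 0$ on $V \cap B_1'$ unchanged; (iii) compute $b^{n+1,\ell}$ on $\{y_{n+1}=0\}$ from the formula in (i) and confirm that the defining ODE for $\psi$ makes these entries vanish — this is where the conormal vector of $A$ gets aligned with $e_{n+1}$; (iv) translate the complementary boundary condition: since the conormal derivative $-a^{n+1,j}\p_j w$ pulls back, up to a positive scalar factor coming from $|\det DT|$ and the normalization, to $-b^{n+1,n+1}\p_{n+1}\tilde w$ on the flat boundary, and $b^{n+1,n+1} > 0$ by ellipticity, the sign conditions $-a^{n+1,j}\p_j w \geq 0$ and $w(a^{n+1,j}\p_j w) = 0$ become exactly $\p_{n+1}\tilde w \leq 0$ and $\tilde w \p_{n+1}\tilde w = 0$; (v) check that uniform ellipticity and the $W^{1,p}$ bound are preserved, with constants depending only on the original ones and on $\|DT\|_{W^{1,p}}$, which in turn is controlled on the small neighborhood.

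The main obstacle is step (iii) together with the regularity bookkeeping in step (i): one must solve the first-order system defining $\psi$ (equivalently, flow along the conormal field or integrate the appropriate vector field) with only $W^{1,p}$ coefficients and still land in $W^{2,p}$, and then justify that the divergence-form equation transforms correctly in the weak sense — a subtle point because the chain rule for $W^{2,p}\circ W^{1,p}$-type compositions requires $p > n+1$ precisely so that everything embeds into continuous (indeed Hölder) functions and products of the relevant factors remain in $W^{1,p}$. A secondary technical nuisance is that the conormal vector field $(a^{n+1,1},\dots,a^{n+1,n+1})$ is only $W^{1,p}$, so its "integral hypersurfaces" must be understood via the transport formulation rather than classical ODE flows; once $\psi$ is obtained, however, the verification that $b^{n+1,\ell} = 0$ on $B_1'$ is an algebraic identity, and the rest of the argument — obstacle constraint, complementary conditions, ellipticity, Sobolev class — is a routine pull-back computation. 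I would isolate the construction of $\psi$ and the transformation law as two short lemmas, cite \cite{U89} for the precise form of the latter, and then assemble the proposition in a paragraph.
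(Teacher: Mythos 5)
The paper itself offers no proof of Proposition~\ref{prop:change}: it is quoted from Uraltseva \cite{U89}, p.~1183. So there is no in-paper argument to compare against. Nevertheless your sketch contains a concrete error in the \emph{form} of the diffeomorphism, and as written the construction cannot succeed.

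You take $T(x)=(x',\psi(x))$, a map that fixes the tangential variables and reparametrizes the normal one, together with $\psi(x',0)=0$ so that $B_1'$ is preserved. This form is incompatible with the target $b^{n+1,\ell}=0$ on $\{y_{n+1}=0\}$. Indeed, with $T^\ell(x)=x_\ell$ for $\ell\le n$ the transformation rule gives
\begin{align*}
b^{n+1,\ell}=|\det DT|^{-1}\,\nabla T^{n+1}\cdot A\nabla T^\ell
=|\det DT|^{-1}\sum_i a^{i\ell}\,\partial_i\psi,
\end{align*}
by the symmetry of $B$. Since $\psi(x',0)\equiv 0$ forces $\nabla'\psi(x',0)=0$, on the boundary the sum collapses to $|\det DT|^{-1}a^{n+1,\ell}\,\partial_{n+1}\psi$, which vanishes for all $\ell\le n$ only if $a^{n+1,\ell}(x',0)=0$ already holds — the very condition you are trying to produce. (In $n=1$ with $A=\left(\begin{smallmatrix}1&a\\a&1\end{smallmatrix}\right)$ constant, the off-diagonal entry of the transformed matrix is $|\psi_2|^{-1}(\psi_1+a\psi_2)$, and on $\{x_2=0\}$ this is $|\psi_2|^{-1}a\psi_2\neq 0$ unless $a=0$.) The correct map is the \emph{dual} normalization, a tangential shear $T(x)=(\Psi(x),x_{n+1})$ with $\Psi(x',0)=x'$ and $\partial_{n+1}\Psi_\ell(x',0)=-a^{n+1,\ell}(x',0)/a^{n+1,n+1}(x',0)$; then $\nabla T^{n+1}=e_{n+1}$ and $b^{n+1,\ell}=|\det DT|^{-1}\sum_j a^{n+1,j}\partial_j\Psi_\ell$ vanishes on $\{y_{n+1}=0\}$ by construction. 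Concretely $\Psi_\ell(x)=x_\ell-\int_0^{x_{n+1}}a^{n+1,\ell}(x',s)/a^{n+1,n+1}(x',s)\,ds$, or a suitably regularized variant, realizes this; your ``flow along the conormal'' picture also produces a tangential shear once one parametrizes the flow time by $x_{n+1}$ (after normalizing the conormal so that its last component is $1$), not a normal reparametrization.

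Two secondary cautions on regularity. First, with $a^{ij}\in W^{1,p}\hookrightarrow C^{0,\gamma}$ and $\gamma<1$, the conormal field is not Lipschitz, so Cauchy--Lipschitz does not apply and the flow need not be unique; you flag this but it should be dealt with, not merely noted. Second, the explicit integral formula above gives $\partial_{n+1}\Psi_\ell\in W^{1,p}$ but second \emph{tangential} derivatives of $\Psi_\ell$ involve second derivatives of $a^{ij}$ under the integral, which are not available at $W^{1,p}$ regularity; thus the claimed $W^{2,p}$ regularity of $T$ does not follow from this formula without further work (e.g.\ an appropriate mollification of the conormal that still matches it to sufficient order on the boundary). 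Both issues are handled in \cite{U89} and are genuinely part of the argument, so the sketch should address them rather than cite the ODE theory generically. The remaining steps — the weak transformation rule, the pull-back of the sign conditions once $b^{n+1,\ell}=0$ so that the conormal derivative reduces to a positive multiple of $\partial_{n+1}\tilde w$, and the preservation of ellipticity and the Sobolev class — are correct in spirit and would go through once the form of $T$ is fixed.
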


Here the boundary data in (\ref{eq:varcoef'}) and (\ref{eq:varcoef2}) are interpreted in a distributional sense: It is known that one can see from the equation by local arguments that, if $ w \in H^1$ is a weak solution in the interior, then $a^{n+1,j} \partial_j w|_{x_{n+1}=0} \in H^{-1/2}$. 
This remains true in the new coordinates: $b^{n+1,n+1} \partial_{n+1} \tilde w|_{x_{n+1}=0}  \in H^{-1/2}$. Since $b^{n+1,n+1}$ is bounded from below and in $W^{1,p}$, the multiplication by its inverse defines a bounded operator on $H^{1/2}$ and $H^{-1/2}$. Thus, $\tilde{w} \partial_{n+1} \tilde{w} |_{x_{n+1}=0}$ defines a distribution on the boundary which we require to be zero. Vice versa: If $\tilde{w} \in H^1$, $\partial_k b^{kl} \partial_l \tilde{w}= 0 $ in $V \cap B_1^+$ and $\partial_{n+1} \tilde w \le 0$, $\tilde w \ge 0$ and $\tilde w \partial_{n+1} \tilde w=0$, then $\tilde w$ is a local solution to the thin obstacle problem. In any of the two formulations the regularity theory
of Uraltseva \cite{U87} applies.
\\

By a further (affine) change of coordinates we may, without loss of generality, assume that
\begin{align*}
|b^{k \ell}(x)-\delta^{k \ell}|\leq C|x|^{\gamma},
\end{align*}
for $\gamma = 1 - \frac{n+1}{p}$ and $\delta^{k \ell}:= \left\{ \begin{array}{ll} 1 & \mbox{ for } k=\ell, \\ 0 & \mbox{ else}, \end{array} \right.$ denotes the Kronecker delta.
In the sequel, for convenience, we will always assume that we are in a sufficiently small coordinate patch such that our thin obstacle problem is formulated in this way. This allows us to exploit the boundary conditions very efficiently. So, without loss of generality, we pass from (\ref{eq:varcoef'}) to considering

\begin{equation}
\begin{split}
\label{eq:varcoef}
\p_i  a^{ij} \p_j  w & = 0 \mbox{ in } B^+_{1}, \\
w \geq 0  \ -\p_{{n+1}}w  \geq 0 \mbox{ and } w (\p_{n+1} w)&= 0 \mbox{ on } B_{1}',
\end{split}
\end{equation}
with 
\begin{itemize}
\item[(A0)] $\left\| w \right\|_{L^2(B_1^+(0))}=1$,
\item[(A1)] $a^{i, n+1}(x',0)=0 \mbox{ on } \R^{n} \times \{0\}$ for $i=1,\ldots, n$,
\item[(A2)] $a^{ij}$ is symmetric and uniformly elliptic with eigenvalues in the interval $[1/2, 2]$,
\item[(A3)] $a^{ij}\in W^{1,p}(B_1^+(0))$ for some $p\in (n+1,\infty]$,
\item[(A4)] $a^{ij}(0)= \delta^{ij}$.
\end{itemize}
Moreover, throughout the paper we assume that the solution, $w$, of (\ref{eq:varcoef}) satisfies
\begin{itemize}
\item[(W)] there exists $\alpha \in (0,\frac{1}{2}]$ such that $w\in C^{1,\alpha}$.
\end{itemize}
Assumption (A0) is a normalization, which is no restriction of generality, since positive multiples of solutions are solutions. Property (A1) may be assumed because of Proposition \ref{prop:change}. By Morrey's inequality and assumption (A3)
\[   |a^{ij}(x) -\delta^{ij}| \leq C\left\| \nabla a^{ij}\right\|_{L^p(B_1^+(0))}|x|^{\gamma},\]  where $\gamma = 1- \frac{n+1}{p}$ and hence the condition (A2) on the eigenvalues holds after rescaling if necessary.
The regularity assumption (W) does not pose any restrictions on the class of solutions, as Uraltseva \cite{U87} proves that this property is true for any $H^1$ solution of (\ref{eq:varcoef}). We however stress that most of the paper is independent of this regularity assumption in the sense that assumption (W) is only invoked in Proposition \ref{prop:homo2}. Apart from this, all the other necessary regularity results are proved ``by hand'' in the paper.

\subsection{Notation}
\label{sec:not}
In this subsection we briefly introduce some notation. We set:
\begin{itemize}
\item $\R^{n+1}_+ := \{x\in \R^{n+1}| \ x_{n+1}\geq 0\}$,  $\R^{n+1}_- := \{x\in \R^{n+1}| \ x_{n+1}\leq 0\}$.
\item Let $x_0=(x_0',0) \in \R^{n+1}_+$. For an upper half-ball of radius $r>0$ around $x_0$ we write $B_r^+(x_0):=\{x\in \R^{n+1}_+| \ |x-x_0|< r \}$; the projection onto the boundary of $\R^{n+1}_+$ is correspondingly denoted by $B_r'(x_0):=\{x\in \R^{n}| \ |x-x_0|< r \}$. If $x_0 = (0,0)$ we also write $B_r^+$ and $B_r'$. Analogous conventions are used for balls in the lower half sphere: $B^-_r(x_0)$. Moreover, we use the notation $B_r''(x_0) = B_r'(x_0)\cap\{x_n=0\}$.
\item Annuli around a point $x_0=(x_0',0)$ in the upper half-space with radii $0<r<R<\infty$ as well as their projections onto the boundary of $\R^{n+1}_+$ are denoted by by $A_{r,R}^+(x_0):= B_{R}^+(x_0)\setminus B_{r}^+(x_0)$ and $A_{r,R}'(x_0):= B_{R}'(x_0)\setminus B_{r}'(x_0)$ respectively. For annuli around $x_0=(0,0)$ we also omit the center point. Furthermore, we set $A_{r,R}(x_0):= A_{r,R}^+(x_0)\cup A_{r,R}^-(x_0)$.
\item $\Omega_w:= \{x\in \R^{n}\times \{0\}| \ w(x)>0\}$ denotes the positivity set. 
\item $\Gamma_w:=\partial_{B'_1}  \Omega_w$ is the free boundary.
\item $\Lambda_w:= B_1'\setminus \Omega_w$ is the coincidence set which we also denote by $\Lambda_w$.
\item  We use the symbol $A \lesssim B$ to denote that there exists an only dimension dependent constant, $C=C(n)$, such that $A\leq C(n)B$. Similar conventions are used for the symbol $\gtrsim$.
\end{itemize}

\section{A Carleman Estimate}
\label{sec:Carl}

\subsection{A Carleman estimate and variations of it}
In this section we introduce a central tool of our argument: The Carleman estimate from Proposition \ref{prop:varmet} allows us to obtain compactness properties for blow-up solutions (c.f. Proposition \ref{prop:blowup}), to deduce the existence of \emph{homogeneous} blow-up solutions (c.f. Proposition \ref{lem:homo}) and to derive the openness of the regular part of the free boundary (c.f. Proposition \ref{prop:semi_cont}). It is a very flexible and robust tool replacing a variable coefficient frequency function argument.

\begin{prop}[Variable coefficient Carleman estimate]
\label{prop:varmet}
Let $n\geq 2$ and $0<\rho<r<1$. 
Let $a^{ij}: A_{\rho,r}^+ \rightarrow \R^{n\times n}_{sym}$ be a tensor field which satisfies 
\begin{itemize}
\item[(i)] $a^{ij}\in W^{1,n+1}(A_{\rho,r}^+)$, 
\item[(ii)] the off-diagonal assumption (A1),
\item[(iii)] the uniform ellipticity assumption (A2),
\item[(iv)] the following smallness condition: There exists $\delta = \delta(n)>0$ such that 
\begin{equation}
\label{eq:small}
\sup\limits_{\rho \leq \tilde{r} \leq r} \left\| \nabla a^{ij} \right\|_{L^{n+1}(A_{\tilde{r},2 \tilde{r}}^+)} \leq \delta.
\end{equation}
\end{itemize}
Assume that $w\in H^1(B_1^+)$ with $\supp(w) \subset \overline{A^+_{\rho, r}}$ is a weak solution of the divergence form equation
\begin{equation}
\label{eq:problem}
\begin{split}
\partial_i a^{ij}\partial_j w &=f \text{ in } A_{\rho, r}^+,\\
w \geq 0, \ \p_{n+1}w \leq 0, \ w \p_{n+1}w &=0 \text{ on } A'_{\rho, r},
\end{split}
\end{equation}
where $f:A_{\rho,r}^+ \rightarrow \R$ is an in $\overline{A_{\rho,r}^+}$ compactly supported $L^2(A_{\rho,r}^+)$ function.
Let $\phi$ be the following radial weight function: 
$$\phi(x)=\tilde{\phi}(\ln|x|) \text{ with } \tilde{\phi}:\R\rightarrow \R, \ \tilde{\phi}(t)=-t+c_0\left(t\arctan t-\frac{1}{2}\ln(1+t^2)\right),$$
where $0<c_0\ll 1$ is an arbitrarily small but fixed constant. 
Then for any $\gamma \in (0,1)$ and any $\tau>1$ we have
\begin{equation}
\label{eq:vCarl}
\begin{split}
&\tau^{\frac{3}{2}} \left\|  e^{\tau \phi}|x|^{-1} (1+\ln(|x|)^2)^{-\frac{1}{2}}  w \right\|_{L^2(A_{\rho, r}^+)} 
+  \tau^{\frac{1}{2}}\left\| e^{\tau \phi} (1+\ln(|x|)^2)^{-\frac{1}{2}}  \nabla w \right\|_{L^2(A_{\rho,r}^+)}\\
&\leq c_0^{-1}C(n) \left(  
 \tau^2 C(a^{ij}) \left\| e^{\tau\phi}|x|^{\gamma-1} w \right\|_{L^2(A_{\rho,r}^+)}  + \left\|e^{\tau \phi} |x| f\right\|_{L^2(A_{\rho,r}^+)} \right),
\end{split}
\end{equation}
where 
\begin{align}
\label{eq:coef_dep}
C(a^{ij}) = \sup\limits_{A_{\rho,r}^+} \left| \frac{a^{ij}(x)-\delta^{ij}}{|x|^{\gamma}} \right| + \sup\limits_{\rho\leq \tilde{r}\leq r/2} \left\| |x|^{-\gamma} \nabla a^{ij} \right\|_{L^{n+1}(A_{\tilde{r},2\tilde{r}}^+)}.
\end{align}
\end{prop}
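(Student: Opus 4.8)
The plan is to prove the variable coefficient estimate \eqref{eq:vCarl} as a perturbation of a constant coefficient ($a^{ij}=\delta^{ij}$, i.e. the Laplacian) Carleman estimate with the same radial weight $\phi$. First I would establish the constant coefficient case: passing to logarithmic polar coordinates $t=\ln|x|$, $\theta\in\mathbb S^n_+$, conjugating $\Delta$ by $e^{\tau\phi}|x|^{(n-1)/2}$ and rescaling in $t$, one obtains a conjugated operator $L_\tau$ whose symbol, thanks to the concavity properties built into $\tilde\phi$ (note $\tilde\phi''(t) = c_0/(1+t^2) < 0$ after the $-t$ part, giving the crucial pseudoconvexity/ellipticity in the commutator), is elliptic in a suitable weighted sense. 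The key algebraic point here is that $\tilde\phi'(t) = -1 + c_0\arctan t$ is strictly monotone and stays close to $-1$, so the conjugated equation never resonates: the "Hardy-type" term $\tau^{3/2}\||x|^{-1}(1+\ln^2|x|)^{-1/2}e^{\tau\phi}w\|$ and the gradient term $\tau^{1/2}\|(1+\ln^2|x|)^{-1/2}e^{\tau\phi}\nabla w\|$ are controlled by $\|e^{\tau\phi}|x|\Delta w\|$ plus boundary contributions. Crucially, the boundary terms on $A'_{\rho,r}$ produced by integration by parts come with the correct sign because of the Signorini conditions $w\ge 0$, $\partial_{n+1}w\le0$, $w\partial_{n+1}w=0$ together with the off-diagonal assumption (A1) (which kills the cross terms $a^{i,n+1}\partial_i w\,\partial_{n+1}w$ on the boundary), exactly as in the constant coefficient thin obstacle literature.

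Next I would handle the variable coefficient perturbation. Writing $\partial_i a^{ij}\partial_j w = \Delta w + \partial_i\big((a^{ij}-\delta^{ij})\partial_j w\big)$, so that $\Delta w = f - \partial_i\big((a^{ij}-\delta^{ij})\partial_j w\big)$, I would apply the constant coefficient estimate with right-hand side $f - \partial_i\big((a^{ij}-\delta^{ij})\partial_j w\big)$. The term $\|e^{\tau\phi}|x|\,\partial_i((a^{ij}-\delta^{ij})\partial_j w)\|_{L^2}$ then splits, via the product rule, into a piece with $(a^{ij}-\delta^{ij})\partial_i\partial_j w$ — better handled by first integrating by parts to move a derivative onto the weight, producing terms like $|x|^{\gamma}\tau\,|\nabla\phi|\,|\nabla w|$ and $|x|^{\gamma}|\nabla w|$ — and a piece $\nabla a^{ij}\cdot\nabla w$, which by the Sobolev exponent $p=n+1$ (via a Hölder/Sobolev interpolation on dyadic annuli $A_{\tilde r,2\tilde r}$, using the trace-free normalization and the dyadic decomposition, exactly matching the form of $C(a^{ij})$ in \eqref{eq:coef_dep}) is controlled by $\delta$ times the gradient term on the left. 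The smallness \eqref{eq:small} lets all perturbative gradient contributions be absorbed into the left-hand side for $\tau$ large, while the remaining contributions carry the factor $\sup|x|^{-\gamma}|a^{ij}-\delta^{ij}|$ and $\tau^2$, producing precisely the $\tau^2 C(a^{ij})\|e^{\tau\phi}|x|^{\gamma-1}w\|$ term on the right after one more Hardy-type bound trading $\nabla w$ against $|x|^{-1}w$.

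I expect the main obstacle to be twofold. First, organizing the boundary integrals: one must integrate by parts on the half-ball keeping careful track of every boundary term on $A'_{\rho,r}$, verify each has a favorable sign from the Signorini conditions, and ensure the low regularity ($a^{ij}\in W^{1,n+1}$, $w\in H^1$ with only the distributional interpretation of the boundary condition from Proposition \ref{prop:change}) makes all these manipulations legitimate — this presumably requires a density/approximation argument, approximating $w$ by smoother functions respecting the obstacle, or working with difference quotients. Second, the dyadic bookkeeping: the $L^{n+1}$ norms of $\nabla a^{ij}$ appear localized to dyadic annuli both in the smallness hypothesis \eqref{eq:small} and in $C(a^{ij})$, so the perturbative estimates must be carried out annulus-by-annulus and then summed, using that $e^{\tau\phi}$ and the polynomial weights are comparable on each dyadic scale; controlling the overlap and the $\tau$-powers through this summation is where the delicate analysis lies. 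The concavity constant $c_0$ and its appearance as $c_0^{-1}$ on the right-hand side is the natural price: $c_0$ measures the strength of the pseudoconvexity, and the estimate degenerates as $c_0\to 0$, which is consistent with the fact that at $c_0=0$ the weight $\phi(x)=-\ln|x|$ is only borderline (logarithmically convex $L^2$ norm), the classical frequency-function threshold.
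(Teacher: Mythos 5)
Your overall strategy matches the paper's: a Carleman estimate for the Laplacian via conformal (logarithmic polar) coordinates and conjugation by $e^{\tau\tilde\phi}$, with the Signorini conditions (together with the off-diagonal normalization (A1) and the radial dependence of $\phi$) killing the boundary contributions; then a perturbative treatment of $\partial_i(a^{ij}-\delta^{ij})\partial_j w$ with dyadic bookkeeping on annuli to match the structure of $C(a^{ij})$ and (\ref{eq:small}).

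There are two problems. The first is a sign slip that is worth correcting because it inverts the key mechanism: from $\tilde\phi(t)=-t+c_0(t\arctan t-\tfrac12\ln(1+t^2))$ you get $\tilde\phi'(t)=-1+c_0\arctan t$ and $\tilde\phi''(t)=c_0/(1+t^2)>0$. The weight is \emph{convex}, not concave, and it is precisely this positivity of $\tilde\phi''$ that makes the commutator $([S,A]u,u)$ bounded from below; calling it ``concavity'' and writing $\tilde\phi''<0$ is backwards, even if your subsequent description of what the pseudoconvexity does is consistent with the correct sign.

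The second and more serious issue is the handling of the second-derivative term in the perturbation. After writing $\Delta w = f-\partial_i((a^{ij}-\delta^{ij})\partial_j w)$ and applying the Laplacian Carleman estimate, you need to bound $\|e^{\tau\phi}|x|(a^{ij}-\delta^{ij})\partial_{ij}w\|_{L^2}$. You propose to ``integrate by parts to move a derivative onto the weight'' so that only $\nabla w$ appears. But this quantity is an $L^2$ norm, not a pairing against a test function: there is nothing to integrate by parts against, and a duality argument would require controlling a derivative of the dual test function, which is not available in $L^2$ duality. The left-hand side of the Carleman inequality gives you control of $w$ and $\nabla w$ but not of $\nabla^2 w$, so the second-order term cannot simply be absorbed. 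The paper's resolution is Lemma \ref{lem:H2}: weighted $H^2$ estimates for the \emph{conjugated} function $v=e^{\tau\phi}w$ on dyadic annuli, proved by a penalization and difference-quotient argument using the Signorini sign condition ($\beta'_\epsilon\ge 0$ makes the boundary term nonpositive). Those estimates, applied annulus by annulus and summed, turn the second-derivative term into $\tau^2 C(a^{ij})\|e^{\tau\phi}|x|^{\gamma-1}w\|$ plus an $f$-term, which is exactly what appears on the right of (\ref{eq:vCarl}). This step is genuinely necessary and nontrivial, and your sketch does not supply a substitute for it.
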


\begin{rmk}[Restrictions on $\tau$]
\label{rmk:tau}
In order to derive the Carleman estimate, it would not have been necessary to assume that $\tau>1$. In fact the estimate is valid for arbitrary $\tau>0$ if we replace the right hand side of (\ref{eq:vCarl}) by
\begin{align*}
&\tau^{\frac{3}{2}} \left\|  e^{\tau \phi}|x|^{-1} (1+\ln(|x|)^2)^{-\frac{1}{2}}  w \right\|_{L^2(A_{\rho, r}^+)} 
+  \tau^{\frac{1}{2}}\left\| e^{\tau \phi} (1+\ln(|x|)^2)^{-\frac{1}{2}}  \nabla w \right\|_{L^2(A_{\rho,r}^+)}\\
&\leq c_0^{-1}C(n) \left(  
 \max\{1,\tau^2 \} C(a^{ij}) \left\| e^{\tau\phi}|x|^{\gamma-1} w \right\|_{L^2(A_{\rho,r}^+)}  + \left\|e^{\tau \phi} |x| f\right\|_{L^2(A_{\rho,r}^+)} \right.\\
& \quad \left. + \tau^{\frac{1}{2}}\left\| e^{\tau \phi}|x|^{-1} (1 + \ln(|x|)^4)^{-\frac{1}{2}}w \right\|_{L^2(A_{\rho,r}^+)} \right).
\end{align*}
Choosing $0<r<1$ depending on $\tau$ sufficiently small, then allows to absorb the error terms in this case as well (c.f. Remark \ref{rmk:Carl_application}).
\end{rmk}

\begin{rmk}[Applications]
\label{rmk:Carl_application}
In the sequel we will apply the Carleman estimate (\ref{eq:vCarl}) with a metric $a^{ij}$ such that $a^{ij}(0)=\delta^{ij}$ and $a^{ij}\in W^{1,p}(B_1^+)$, $p\in (n+1,\infty]$. Thus, by Morrey's and Hölder's inequalities, for $\gamma=1-\frac{n+1}{p}$, $C(a^{ij})$ is bounded by a constant which depends only on $n$ and $\|\nabla a^{ij}\|_{L^p(B_1^+)}$.

In our applications, we will always consider $r\leq R_0=R_0(\tau,n,p,\|\nabla a^{ij}\|_{L^p(B_1^+)})$ sufficiently small, such that the smallness condition  \eqref{eq:small} is satisfied and moreover, the first term on the right hand side of \eqref{eq:vCarl} can be absorbed by the left hand side. More precisely, we consider $R_0$ such that
\begin{align*}
C(a^{ij})R_0^{\gamma}\leq \delta, \quad \tau ^{\frac{1}{2}}c_0^{-1}C(n)C(a^{ij})R_0^\gamma |\ln R_0| \leq 1/4.
\end{align*}
Then the Carleman inequality \eqref{eq:vCarl} can be rewritten as
\begin{equation}
\label{eq:vCarl_modified}
\begin{split}
&\tau^{\frac{3}{2}} \left\|  e^{\tau \phi}|x|^{-1} (1+\ln(|x|)^2)^{-\frac{1}{2}}  w \right\|_{L^2(A_{\rho, r}^+)} 
+  \tau^{\frac{1}{2}}\left\| e^{\tau \phi} (1+\ln(|x|)^2)^{-\frac{1}{2}}  \nabla w \right\|_{L^2(A_{\rho,r}^+)}\\
&\leq c_0^{-1}C(n) 
 \left\|e^{\tau \phi} |x| f\right\|_{L^2(A_{\rho,r}^+)} \quad \text{for } 0<\rho<r\leq R_0 \mbox{ and } \tau \geq 1.
\end{split}
\end{equation}
\end{rmk}

\begin{rmk}[Carleman estimate for $n=1$]
\label{rmk:Carl_n=1}
In one dimension it is possible to prove a similar Carleman estimate as above. However, slight modifications are necessary. Instead of working with the $\dot{W}^{1,2}$ semi-norm (which would be the scale invariant norm in $(1+1)$-dimensions) of the metric and the smallness condition (\ref{eq:small}), we consider $L^p$ norms, $\left\| \nabla a^{ij} \right\|_{L^p}$ with $p>2$. Moreover, all Sobolev embedding arguments are then replaced by interpolation inequalities of the form
\begin{equation}
\label{eq:interpol}
\left\| w \right\|_{L^{\frac{2p}{p-2}}} \leq c(p,n) \left\| \nabla w \right\|_{L^2}^{\frac{2}{p}} \left\| w \right\|_{L^2}^{1-\frac{2}{p}}.
\end{equation}
This then yields the following Carleman estimate 
\begin{equation}
\label{eq:vCarl_2D}
\begin{split}
&\tau^{\frac{3}{2}} \left\|  e^{\tau \phi}|x|^{-1} (1+\ln(|x|)^2)^{-\frac{1}{2}}  w \right\|_{L^2(A_{\rho, r}^+)} 
+ \tau^{\frac{1}{2}}\left\| e^{\tau \phi} (1+\ln(|x|)^2)^{-\frac{1}{2}}  \nabla w \right\|_{L^2(A_{\rho,r}^+)}\\
&\leq c_0^{-1}C(n,p) \left(  
 \tau^2 \left\| \nabla a^{ij} \right\|_{L^p} \left\| e^{\tau\phi}|x|^{\gamma-1} w \right\|_{L^2(A_{\rho,r}^+)} 
 + \left\|e^{\tau \phi} |x| f\right\|_{L^2(A_{\rho,r}^+)} \right).
\end{split}
\end{equation}
\end{rmk}

\begin{rmk}[Generalizations]
\begin{itemize}
\item With slight modifications, the Carleman estimate is valid for more general (radial) pseudoconvex weight functions, $\phi(x) = \tilde{\phi}(\ln(|x|))$. In this more general setting (but with $\tilde{\phi}(t)\sim -t$ and $| \tilde{\phi}'(t)| \sim 1$ asymptotically as $t \rightarrow -\infty$) the Carleman estimate reads
\begin{equation}
\label{eq:vCarl'}
\begin{split}
& \tau^{\frac{3}{2}} \left\|  e^{\tau \phi}|x|^{-1} \tilde{\phi}' (\tilde{\phi}'')^{\frac{1}{2}}  w \right\|_{L^2(A_{\rho, r}^+)} + \tau^{\frac{1}{2}} \left\|  e^{\tau \phi}   (\tilde{\phi}'')^{\frac{1}{2}}  \nabla w \right\|_{L^2(A_{\rho, r}^+)} \\
&\leq    c_0^{-1}C(n)\left( \tau^2  C(a^{ij}) \left\| e^{\tau\phi}|x|^{\gamma -1} w \right\|_{L^2(A_{\rho,r}^+)} 
+  \left\|e^{\tau \phi}|x| f\right\|_{L^2(A_{\rho,r}^+)} \right),
\end{split}
\end{equation}
where $C(a^{ij})$ is the constant from (\ref{eq:coef_dep}).
\item As we aim at absorbing the error terms on the right hand side of (\ref{eq:vCarl}) or (\ref{eq:vCarl'}) into the left hand side of the inequality if $\tau>1$ is uniformly bounded by some $\tau_0<\infty$, it is in general not possible to work with ``linear'' weight functions in (\ref{eq:vCarl'}). As a consequence, in the sequel we will always consider strictly pseudoconvex weight functions with the exception of Lemma \ref{lem:varmet1}.
\item In contrast to the setting of unique continuation, where the metrics under consideration are \emph{Lipschitz}, our metric is \emph{only $W^{1,p}$, $p>n+1$, regular}. On the one hand, this requires additional care in the commutator estimates of the Carleman inequality. On the other hand, the low regularity of the metric also implies that we cannot hope for the unique continuation principle to hold in the setting of (\ref{eq:varcoef}). Hence, the Carleman estimate can, for instance, \emph{not} exclude free boundary points with infinite order of vanishing. On a technical level these observations are manifested in the error contributions on the right hand side of the inequality (\ref{eq:vCarl}) which carry \emph{higher} powers of $\tau$ than the analogous contributions on the left hand side.
\item It is possible to derive an analogous Carleman inequality for the setting of the \emph{interior} thin obstacle problem and the case in which inhomogeneities are present in the thin obstacle problem. For a detailed discussion of this we refer to Section 5 in \cite{KRSI}.
\end{itemize}
\end{rmk}

Before proving the Carleman estimate, we state $H^2$ estimates for the associated conjugated operator in dyadic annuli which will be proved in Section \ref{sec:H2} at the end of this Section. 

\begin{lem}
\label{lem:H2}
Let $n \geq 2$. Then there exists $\delta= \delta(n)>0$ such that the following is true:
Let $a^{ij}:B_{1}^+ \rightarrow \R^{(n+1)\times (n+1)}_{sym}$ be a uniformly elliptic, symmetric tensor field which satisfies (\ref{eq:small}).
Further, assume that $v: B_1^+ \rightarrow \R$ with $v\in H^{1}(B_1^+)$ is a weak solution of
\begin{equation}
\label{eq:H2_est}
\begin{split}
&\p_i a^{ij} \p_j v + \tau^2 (\p_{i} \phi) a^{ij} (\p_j \phi)v 
- \tau a^{ij}(\p_i \phi) \p_j v \\
& \quad \quad \quad - \tau (\p_j \phi)\p_i(a^{ij}v)
- \tau a^{ij} (\p_{ij}^2 \phi) v = f \mbox{ in } B_{1}^+, \\
&\p_{n+1} v \leq 0, \ v\geq 0, \ v\p_{n+1} v =0  \mbox{ on } B_{1}',\\
&v= 0 \mbox{ on } \partial B_1^+,
\end{split}
\end{equation}
where $\phi(x)= \tilde{\phi}(\ln(|x|))$ is defined as in Proposition \ref{prop:varmet} and $f:B_1^+ \rightarrow \R$ is an $L^2(B_1^+)$ function.
Then on each dyadic (half-)annulus $ A_m^{+}:=B_{2^{-m}}^+ \setminus B_{2^{-m-1}}^+$, $m\in \N$, $m \geq 1$, and for each $\tau>1$ 
\begin{equation}
\begin{split}
\label{eq:H2}
\left\| \nabla^2 v \right\|_{L^2(A_m^+)}  \lesssim &\ \tau^2  \left\| |x|^{-2} v \right\|_{L^2(A_{m-1}^+\cup A_m^+ \cup A_{m+1}^+)} + \left\| f \right\|_{L^2(A_{m-1}^+\cup A_m^+ \cup A_{m+1}^+)}.
\end{split}
\end{equation}
\end{lem}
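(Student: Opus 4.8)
The plan is to derive the interior $H^2$ bound on each dyadic annulus $A_m^+$ by a standard localization and difference-quotient argument, treating the lower-order $\tau$-terms in \eqref{eq:H2_est} as inhomogeneities and carefully tracking the scaling in $|x|$ and the powers of $\tau$. First I would fix $m$ and introduce a smooth cutoff $\chi = \chi_m$ which equals $1$ on $A_m^+$ and is supported in $A_{m-1}^+\cup A_m^+\cup A_{m+1}^+$, with $|\nabla\chi|\lesssim 2^{m}\sim |x|^{-1}$ and $|\nabla^2\chi|\lesssim 2^{2m}\sim|x|^{-2}$ on the support. The point of the three-annulus neighborhood on the right-hand side of \eqref{eq:H2} is precisely to accommodate the support of $\chi$ and its derivatives. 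Rewriting \eqref{eq:H2_est} as $\p_i a^{ij}\p_j v = g$ with
\[
g := f - \tau^2 (\p_i\phi) a^{ij}(\p_j\phi) v + \tau a^{ij}(\p_i\phi)\p_j v + \tau(\p_j\phi)\p_i(a^{ij}v) + \tau a^{ij}(\p_{ij}^2\phi) v,
\]
I would estimate $\|g\|_{L^2}$ on the support of $\chi$ using the explicit bounds $|\nabla\phi|\lesssim |x|^{-1}$ and $|\nabla^2\phi|\lesssim |x|^{-2}$ coming from $\phi(x)=\tilde\phi(\ln|x|)$ with $\tilde\phi'$ bounded and $\tilde\phi''\sim(1+t^2)^{-1}$; since $|x|\sim 2^{-m}$ on $A_m^+$, all these weights are comparable to constants times powers of $2^m$. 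The terms involving $\nabla v$ (and $\nabla(a^{ij}v)$, which also produces $\nabla a^{ij}\cdot v$) cost one derivative of $v$, so they cannot be absorbed into the first-derivative norm for free; I would control $\|\nabla v\|_{L^2}$ on the enlarged annulus by a Caccioppoli-type estimate (testing the equation against $\chi^2 v$ and using the boundary condition $v\p_{n+1}v=0$, which has the favorable sign on $B_1'$), bounding it by $\tau^2\||x|^{-2}v\|_{L^2}$ plus $\|f\|_{L^2}$ on the three-annulus neighborhood — this uses the Sobolev/Morrey control of $\nabla a^{ij}$ via the smallness \eqref{eq:small} so that the $W^{1,n+1}$ coefficient multiplying $v$ contributes an $L^2$ term after Hölder and Sobolev embedding.

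With $g$ under control, the core step is the Calderón–Zygmund / difference-quotient estimate for the divergence-form operator $\p_i a^{ij}\p_j$ with $W^{1,n+1}$ coefficients, applied to $\chi v$, together with the reflection argument that handles the Signorini condition. Here the complementary boundary condition $v\p_{n+1}v=0$, $v\ge 0$, $\p_{n+1}v\le 0$ together with the off-diagonal assumption (A1) ($a^{i,n+1}=0$ on $B_1'$) is what lets one use the standard trick of splitting $B_1'$ into $\{v=0\}$ (where $v$ extends by zero reflection) and $\{v>0\}$ (where $\p_{n+1}v=0$, i.e. a Neumann condition), so that tangential difference quotients of $\chi v$ remain admissible test functions and one gets full $\|\nabla^2(\chi v)\|_{L^2}\lesssim \|g\|_{L^2}+\|\nabla(\chi v)\|_{L^2}+\ldots$ after using the equation to recover $\p_{n+1,n+1}v$ from the other second derivatives. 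I expect this is essentially the argument in Uraltseva \cite{U87} (or in the references on the Signorini problem), so I would cite it and only indicate the modifications needed for $W^{1,n+1}$ (rather than Lipschitz) coefficients, namely that the commutator $[\text{difference quotient}, a^{ij}]$ is handled by Hölder's inequality $\|(\nabla a^{ij})\,\nabla v\|_{L^2}\le \|\nabla a^{ij}\|_{L^{n+1}}\|\nabla v\|_{L^{2(n+1)/(n-1)}}$ and the Sobolev embedding $\dot H^1\hookrightarrow L^{2(n+1)/(n-1)}$, with the smallness \eqref{eq:small} ensuring the resulting $\|\nabla^2 v\|_{L^2}$ term can be absorbed on the left.

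Finally I would collect the pieces: rescaling everything to the unit annulus $A_1^+$ (so that $|x|\sim 1$ and $\tau$-weights are genuinely constants), apply the rescaled $H^2$ estimate with the rescaled $g$, then undo the rescaling to restore the $|x|^{-2}$ weights, obtaining \eqref{eq:H2} on $A_m^+$. The main obstacle, and the step I would spend the most care on, is the boundary regularity at the thin set: making the reflection/difference-quotient argument rigorous for the Signorini condition while simultaneously only assuming $W^{1,n+1}$ coefficients, since one must verify that the low regularity does not obstruct the splitting of $B_1'$ and that all commutator terms genuinely close up under the smallness hypothesis. The interior part and the $\tau$-bookkeeping are routine by comparison.
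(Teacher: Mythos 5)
Your proposal captures the right skeleton of the proof and would, if carried out carefully, yield the stated estimate; however, the paper handles the Signorini boundary condition by a genuinely different device than the one you sketch. You propose to close the argument via a ``reflection/difference-quotient'' estimate at the thin boundary, reflecting $v$ oddly on $\{v=0\}$ and evenly on $\{v>0\}$. As stated this is not a valid global reflection: gluing an odd reflection on one part of $\{x_{n+1}=0\}$ to an even reflection on the rest does not produce an $H^1$ extension across the hyperplane, precisely because the free boundary sits in between. What does work, and what you clearly have in mind when you speak of ``tangential difference quotients ... remaining admissible test functions,'' is Nirenberg's translation method applied directly to the variational inequality, using that tangential shifts preserve the constraint $v\geq 0$ on $B_1'$.

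The paper avoids this bookkeeping by regularizing: it replaces the Signorini condition by the smooth penalized boundary condition $\partial_{n+1}v^{\epsilon}=\beta_{\epsilon}(v^{\epsilon})$ with $\beta_{\epsilon}'\geq 0$, regularizes $a^{ij}$ and the right-hand side, and then differentiates the equation tangentially; the sign condition on the boundary becomes simply $-\int \beta_\epsilon'(v^\epsilon)(\partial_\mu v^\epsilon)^2\zeta^2 \leq 0$, after which one integrates by parts, recovers $\partial_{n+1,n+1}^2v^\epsilon$ from the equation, and passes to the limit by weak lower semi-continuity. The two routes converge on identical error handling: both estimate the $\tau$-weighted terms in $g$ using $|\nabla\phi|\lesssim|x|^{-1}$, $|\nabla^2\phi|\lesssim|x|^{-2}$, both use the Caccioppoli bound for $\nabla v$, and both use Hölder plus the Sobolev embedding $\dot H^1\hookrightarrow L^{2(n+1)/(n-1)}$ together with the smallness \eqref{eq:small} to absorb the commutator $\nabla a^{ij}\cdot\nabla v$. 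What the penalization buys is a clean way to make the tangential differentiation rigorous in the $W^{1,n+1}$ setting; your rescaling to a unit annulus is a harmless alternative to the paper's direct tracking of the $2^m$ weights via the cutoff $\zeta$.
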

 
\begin{rmk}[Modifications for $n=1$] Analogous to the Carleman estimate, Lemma \ref{lem:H2} has a similar extension to $n=1$. In that case, we replace the smallness condition (\ref{eq:small}) and all Sobolev embedding arguments by interpolation arguments as in (\ref{eq:interpol}).
\end{rmk}

\begin{proof}[Proof of Proposition \ref{prop:varmet}]
We interpret the equation as a perturbation of the Laplacian
\begin{align}
\label{eq:perturb}
\D w = \p_{i}(a^{ij} - \delta^{ij})\p_j w + f,
\end{align}
and argue in two steps: First, we derive a Carleman inequality for the Laplacian with Signorini boundary conditions. In a second step, we explain how to treat the right hand side of (\ref{eq:perturb}). However, before coming to this, we recall that the boundary contributions in (\ref{eq:problem}) are well-defined as distributions.\\

\emph{Step 1: Regularity of $w$.}
In order to make sense of the boundary contributions, we refer to the discussion following (\ref{eq:varcoef2}). Moreover, using the $H^2$ regularity of $w$ (c.f. Lemma \ref{lem:H2}), we may also deal with second derivatives of $w$ in $L^2(B_1^+)$ as well as $L^2$ gradient contributions on the boundary. An alternative method to deal with, for example, boundary contributions would have been a regularization mechanism (c.f. Remark \ref{rmk:regularization})\\

\emph{Step 2: Carleman for the Laplacian with Signorini boundary conditions.} We will show that
\begin{equation}
\label{eq:vCarl00}
\begin{split}
&\tau^{\frac{3}{2}} \left\|  e^{\tau \phi}|x|^{-1} (1+\ln(|x|)^2)^{-\frac{1}{2}}  w \right\|_{L^2(A_{\rho, r}^+)} +  \tau^{\frac{1}{2}}\left\| e^{\tau \phi} (1+\ln(|x|)^2)^{-\frac{1}{2}}  \nabla w \right\|_{L^2(A_{\rho,r}^+)}\\
&\leq  c_0^{-1}C(n)\left\|e^{\tau \phi}|x|\D w \right\|_{L^2(A_{\rho,r}^+)}.
\end{split}
\end{equation}
As in the article of Koch and Tataru, \cite{KT1}, we carry out a change into radial conformal coordinates.\\
To this end, we first consider the $(n+1)$-dimensional Laplacian in polar coordinates
\begin{align*}
\p_{r}^2+\frac{n}{r}\p_{r}+\frac{1}{r^2}\D_{S^{n}},
\end{align*}
and then carry out a change into radial conformal coordinates. Setting $r=e^{t}$, yields $\dr = e^{-t}\dt$, which in turn transforms the operator into 
\begin{align*}
e^{-2t}( \dt^2 + (n-1)\dt + \D_{S^{n}}).
\end{align*}
Conjugating with $e^{-\frac{n-1}{2}t}$ (which corresponds to setting $w=e^{- \frac{n-1}{2}t}\tilde{u}$) and multiplying the operator with $e^{2t}$, results in an operator on $\R\times S^{n}_+$ which is of the form
\begin{align}
\label{eq:op1}
\dt^2 - \frac{(n-1)^2}{4} + \D_{S^n}.
\end{align}
In the sequel, we only work with this modified operator and prove a Carleman estimate for it. Under the described change of coordinates and using the complementary boundary conditions combined with the $H^2$ regularity of $w$ (which follows from Lemma \ref{lem:H2}), the original Cartesian boundary conditions 
\begin{align*}
 w\geq 0, \ -\p_{n+1}w \geq 0 \mbox{ and } w \p_{n+1}w = 0 \mbox{ on } \R^n,
\end{align*}
now read
\begin{align*}
 \tilde{u} \geq 0, \  \nu\cdot \nabla_{S^n}\tilde{u} \geq 0 \mbox{ and } \tilde{u} (\nu\cdot \nabla_{S^n}\tilde{u}) = 0 \mbox{ on }  \R\times S^{n-1},
\end{align*}
where $\nu=(0,...,0,-1)$ denotes the outward unit normal to $S^{n}_+$. \\
In these new coordinates \eqref{eq:vCarl00} can be phrased as
\begin{align*}
&\tau^{\frac{3}{2}}\left\| e^{\tau \tilde{\phi}} (1+t^2)^{-\frac{1}{2}} \tilde{u} \right\|_{L^2((-\infty,0] \times S^n_+)}
+ \tau^{\frac{1}{2}}\left\| e^{\tau \tilde{\phi}} (1+t^2)^{-\frac{1}{2}} \nabla_{(t,S^n)} \tilde{u} \right\|_{L^2((-\infty,0] \times S^n_+)}\\
&\leq c_0^{-1}C(n) \left\| e^{\tau \tilde{\phi}}\left(\dt^2 - \frac{(n-1)^2}{4} + \D_{S^n}\right) \tilde{u} \right\|_{L^2((-\infty,0] \times S^n_+)}.
\end{align*}

We conjugate the operator (\ref{eq:op1}) with the only $t$-dependent weight, $e^{\tau \tilde{\phi}}$ (and in particular, we define $u=e^{\tau \tilde{\phi}}\tilde{u}$). This leads to the following (up to boundary contributions) ``symmetric'' and ``antisymmetric'' parts of the operator:
\begin{equation}
\label{eq:op}
\begin{split}
S & = \dt^2 + \tau^2(\dt \tilde{\phi})^2 - \frac{(n-1)^2}{4} + \D_{S^n},\\
A & = -2\tau (\dt \tilde{\phi})\dt - \tau \dt^2 \tilde{\phi}.
\end{split}
\end{equation}
In order to derive the desired Carleman estimate, we expand the $L^2$-norm of $(A+S)u$:
\begin{align*}
\left\| (S+A)u \right\|_{L^2(M)}^2 = \left\| S u \right\|_{L^2(M)}^2 + \left\| A u \right\|_{L^2(M)}^2 + 2(S u,A u)_{L^2(M)},
\end{align*}
where $M=\R_- \times S^n_+$. In order to bound this from below, we have to control the last contribution $2(S u,A u)_{L^2(M)}$. To this end, we integrate by parts and reformulate it as a commutator. As our ``symmetric'' and ``antisymmetric'' operators are only symmetric and antisymmetric up to boundary contributions, this also leads to boundary integrals:
\begin{align}
\label{eq:comm}
2(S u, A u)_{L^2(M)} = ([S,A] u,u)_{L^2(M)} + \mbox{ boundary contributions.}
\end{align}
A calculation shows that the boundary contributions are of the form
\begin{equation}
\label{eq:boundary_cont}
\begin{split}
- 8\tau \int\limits_{\R_- \times S^{n-1}} \dt \tilde{\phi} (\nu\cdot \nabla_{S^n}u )\dt u d\mathcal{H}^{n-1}dt - 4\tau \int\limits_{\R_- \times S^{n-1}} \dt^2 \tilde{\phi} (\nu\cdot \nabla_{S^n}u ) u d\mathcal{H}^{n-1}dt,
\end{split} 
\end{equation}
where $\nu=(0,\dots,0,-1)$ denotes the outer unit normal field associated with $S^{n}_+$. \\
We begin by discussing these boundary contributions:  Due to the radial dependence of $\tilde{\phi}$ (which in Cartesian variables implies that all the terms involving $\p_{n+1} \phi$ vanish on $\supp(w)\cap B_1'$), the integrands in (\ref{eq:boundary_cont}) correspond to contributions of the form
\begin{align*}
(x\cdot \nabla w )\p_{n+1}w \mbox{ and } w\p_{n+1}w,
\end{align*}
in Cartesian variables. By the complementary boundary conditions and the $H^2$ regularity of $w$, both of these vanish on $B_{1}'$: In fact, $w\in H^1(A_{\rho,r}')$ and thus $x\cdot \nabla w = 0$ almost everywhere where $w(x)=0$ (c.f. for instance Theorem 6.17 in \cite{LL01}). In particular, $x\cdot \nabla w=0$ for almost every $x \in \Lambda_w$. Combined with the vanishing of $\p_{n+1}w$ on $ \Omega_w$, this implies that the boundary contributions in the Carleman estimate vanish. \\
We hence proceed to the bulk commutator contribution in (\ref{eq:comm}): As the weight $\tilde{\phi}$ is sufficiently pseudoconvex, the commutator is bounded from below. Indeed,
\begin{align*}
([S,A]u,u)_{L^2(M)}&=  4\tau ((\dt^2 \tilde{\phi}) \dt u, \dt u)_{L^2(M)} + 4 \tau^3 ((\dt^2 \tilde{\phi})(\dt \tilde{\phi})^2 u, u)_{L^2(M)}\\
&\quad   - \tau ((\dt^4 \tilde{\phi}) u,u)_{L^2(M)}.
\end{align*}
Using the explicit form of $\tilde{\phi}$, we have $\p_t^4\tilde{\phi}(t)\leq (9/8) \p_t^2\tilde{\phi}(t)$ for all $t\leq 0$. Hence, the last term can be absorbed into the other contributions if  $\tau \geq 1$:
\begin{align*}
([S,A]u,u)_{L^2(M)}& \geq  4\tau \left\| (\dt^2 \tilde{\phi})^{1/2} \dt u \right\|_{L^2(M)}^2 +  2\tau^3 \left\| (\dt^2 \tilde{\phi})^{1/2}(\dt \tilde{\phi}) u \right\|_{L^2(M)}^2.
\end{align*}
Hence, we obtain
\begin{align*}
\left\| (S+A)u \right\|_{L^2(M)}^2 & \geq  \left\| S u \right\|_{L^2(M)}^2 + \left\| A u \right\|_{L^2(M)}^2 \\
& \quad + 4\tau \left\| (\dt^2 \tilde{\phi})^{1/2} \dt u \right\|_{L^2(M)}^2 +  2\tau^3 \left\| (\dt^2 \tilde{\phi})^{1/2}(\dt \tilde{\phi}) u \right\|_{L^2(M)}^2.
\end{align*}
Last but not least, we upgrade the gradient estimate from an estimate for the radial derivative to an estimate for the full gradient: Using the symmetric part of the operator, we deduce
\begin{equation}
\label{eq:gradup}
\begin{split}
&\left\| (\dt^2 \tilde{\phi})^{\frac{1}{2}} \nabla_{S^n} u \right\|_{L^2(M)}^2 + \left\| (\dt^2 \tilde{\phi})^{\frac{1}{2}} \dt u \right\|_{L^2(M)}^2 \\
&= -((\dt^2 \tilde{\phi}) u, \D_{S^n} u)_{L^2(M)} - ((\dt^2 \tilde{\phi}) u, \dt^2 u)_{L^2(M)} \\
& \quad + \int\limits_{\R \times S^{n-1}} (\dt^2 \tilde{\phi}) (\nu \cdot \nabla_{S^n} u ) u d\mathcal{H}^{n-1} dt\\
& = -((\dt^2 \tilde{\phi})u, S u)_{L^2(M)} + \tau^2 ((\dt^2 \tilde{\phi})(\dt \tilde{\phi})^2 u, u)_{L^2(M)} \\
& \quad + 
\frac{(n-1)^2}{4}((\dt^2 \tilde{\phi})u,u)_{L^2(M)}
\\
& \leq  \tau \left\| (\dt^2 \tilde{\phi}) u \right\|_{L^2(M)}^2 + \frac{1}{\tau} \left\| Su \right\|_{L^2(M)}^2 + \tau^2 \left\| (\dt^2 \tilde{\phi})^{\frac{1}{2}}(\dt \tilde{\phi}) u \right\|_{L^2(M)}^2 \\
& \quad + \frac{(n-1)^2}{4}\left\| (\dt^2 \tilde{\phi})^{\frac{1}{2}} u\right\|_{L^2(M)}^2. 
\end{split}
\end{equation}
Again we used the complementary boundary conditions to dispose of the boundary integral. Observing that all right hand side contributions in (\ref{eq:gradup}) can be absorbed into the left hand side of the Carleman inequality and that this remains true if we multiply (\ref{eq:gradup}) with the factor $c \tau$ for a small, positive constant $c$, we obtain the desired full gradient estimate.\\
Thus, inserting the changes we made in passing to our conformal change of variables, i.e. $w = e^{\frac{n-1}{2}t} e^{-\tau \phi}u$, and recalling the changes in the volume element, results in the claimed inequality \eqref{eq:vCarl00}.\\

\emph{Step 3: Bounds for the right hand side of (\ref{eq:perturb}).}
We now proceed to estimating the right hand side of (\ref{eq:perturb}) and (\ref{eq:vCarl}). Applying the triangle inequality immediately leads to
\begin{align*}
\left\|e^{\tau \phi}|x|\D w \right\|_{L^2(A_{\rho,r}^+)} \leq \left\| e^{\tau \phi}|x|f \right\|_{L^2(A_{\rho,r}^+)} + \left\| e^{\tau \phi} |x|\p_i (a^{ij}-\delta^{ij}) \p_j w \right\|_{L^2(A_{\rho,r}^+)}.
\end{align*}
Thus, it remains to control the second contribution. Setting $v= e^{\tau \phi}w$, in Cartesian coordinates after conjugation, we have 
\begin{align*}
&e^{\tau \phi} |x|\p_i (a^{ij}-\delta^{ij}) \p_j w = |x|[(\p_i a^{ij})\p_j v - \tau (\p_j \phi)(\p_i a^{ij}) v + (a^{ij}-\delta^{ij})\p_{ij}v \\
&\quad - 2\tau (a^{ij}-\delta^{ij})(\p_i \phi)\p_j v + \tau^2 (a^{ij}-\delta^{ij})(\p_i \phi)(\p_j \phi)v - \tau (a^{ij}-\delta^{ij})(\p_{ij} \phi)v ].
\end{align*}
Since $| a^{ij}(x) -\delta^{ij}|\leq \sup\limits_{A_{\rho,r}^+}\left| \frac{a^{ij}-\delta^{ij}}{|x|^{\gamma}} \right| |x|^{\gamma} \leq C(a^{ij}) |x| ^{\gamma}$, we can directly bound all the contributions which neither contain derivatives of $a^{ij}$ nor second order derivatives of $v$. More precisely, we have
\begin{align*}
&2 \tau \left\| |x| (a^{ij}-\delta^{ij})(\p_i \phi)\p_j v\right\|_{L^2} + \tau^2 \left\|  |x| (a^{ij}-\delta^{ij})(\p_i \phi)(\p_j \phi)v \right\|_{L^2} \\
&\quad + \tau \left\| |x|(a^{ij}-\delta^{ij})(\p_{i j} \phi) v  \right\|_{L^2} \\
& \lesssim \tau \left\|  (a^{ij}-\delta^{ij})\p_j v\right\|_{L^2} + \tau^2 \left\|  |x|^{-1} (a^{ij}-\delta^{ij})v \right\|_{L^2} \\
& \lesssim \tau C(a^{ij})\left\|  |x|^{\gamma} \p_j v\right\|_{L^2} + \tau^2 C(a^{ij})  \left\|  |x|^{\gamma -1} v \right\|_{L^2},
\end{align*}
where we used that $|\nabla \phi(x)| \leq C|x|^{-1}, \ |\nabla^2 \phi(x)| \leq C|x|^{-2}$. Thus, using that the gradient term can be controlled by the $L^2$ contribution on the right hand side of the last inequality (c.f. the comments at the beginning of the proof of Lemma \ref{lem:H2}), these errors are of the form of the contributions on the right hand side of (\ref{eq:vCarl}).\\

We continue with the contribution which involves second derivatives of $v$. We first decompose the $L^2$ norm over $B_1^+$ into $L^2$ norms on the dyadic (half) annuli $ A_m^{+}:=B_{2^{-m}}^+ \setminus B_{2^{-m-1}}^+$:
\begin{align*}
\left\| |x|(a^{ij}-\delta^{ij})\p_{ij}v \right\|_{L^2(B_1^+)}
&\leq \sum\limits_{m\in \N} 2^{-m} \sup\limits_{A_m^+}|a^{ij}(x)-\delta^{ij}| \left\| \p_{ij}v \right\|_{L^2(A_m^+)} .
\end{align*}
On each annulus we invoke Lemma \ref{lem:H2}:
\begin{align*}
\left\| \p_{ij}v \right\|_{L^2(A_m^+)} 
\lesssim \tau^2 2^{2m}\left\| v\right\|_{L^2(A_{m-1}^+\cup A_m^+ \cup A_{m+1}^+)} + \left\|f \right\|_{L^2(A_{m-1}^+\cup A_m^+ \cup A_{m+1}^+)}.
\end{align*} 
Using the finite number of overlaps of the annuli in the right hand side contributions, we can sum over these. Combining this with the observation that
\begin{align*}
 \sup\limits_{A_m^+}|a^{ij}(x)-\delta^{ij}| \leq 2\quad \mbox{ and }\quad  \sup\limits_{A_m^+}|a^{ij}(x)-\delta^{ij}| \leq C(a^{ij}) 2^{- m \gamma}, 
\end{align*}
then results in 
\begin{align*}
&\left\| |x|(a^{ij}-\delta^{ij}) \p_{ij}v \right\|_{L^2(B_1^+)}\\
& \lesssim \sum\limits_{m\in \N} 2^{-m} \sup\limits_{A_m^+}|a^{ij}(x)-\delta^{ij}|\left( \tau^2 2^{2 m} \left\| v\right\|_{L^2(A_{m-1}^+\cup A_m^+ \cup A_{m+1}^+)} \right.\\
&\quad \left. + \left\|f \right\|_{L^2(A_{m-1}^+\cup A_m^+ \cup A_{m+1}^+)}\right)\\
& \lesssim \sum\limits_{m\in \N} \left(C(a^{ij})\tau^2 2^{m(1-\gamma)} \left\| v\right\|_{L^2(A_{m-1}^+\cup A_m^+ \cup A_{m+1}^+)}\right.\\
& \quad + \left. 2^{-m}\left\|f \right\|_{L^2(A_{m-1}^+\cup A_m^+ \cup A_{m+1}^+)}\right)\\
& \lesssim C(a^{ij})\tau^2 \left\| |x|^{\gamma-1} v\right\|_{L^2(B_1^+)}+  \left\||x| f \right\|_{L^2(B_1^+)}.
\end{align*}
All of these are terms as in the right hand side of (\ref{eq:vCarl}).\\
Hence, it remains to consider the terms which carry derivatives on the coefficients $a^{ij}$.
Introducing a partition of unity and using Hölder's inequality, we have
\begin{align*}
\left\| |x|(\p_i a^{ij})\p_j v \right\|_{L^2(B_1^+)}
&\leq \sum\limits_{m\in \N} \left\| \p_i a^{ij} \right\|_{L^{n+1}(A_m^+)} \left\| |x| \eta_m \p_j v \right\|_{L^{\frac{2(n+1)}{n-1}}(B_1^+)}.
\end{align*}
Here we assume that each of the functions $\eta_m$ is supported in the annulus $A_{2^{-m-1},2^{-m+1}}$ and satisfies derivative bounds of the form $|\p_j \eta_m| \lesssim 2^{m}$.
Sobolev's inequality then yields 
\begin{equation}
\label{eq:p_a_contribution}
\begin{split}
\left\| |x| \eta_m \p_j v \right\|_{L^{\frac{2(n+1)}{n-1}}(B_1^+)} &\lesssim 2^{-m}\left\| \eta_m  \p_j v \right\|_{L^{\frac{2(n+1)}{n-1}}(B_1^+)}\\
&\lesssim \left\| |x|\p_{ij} v \right\|_{L^2(A_{m-1}^+\cup A_m^+ \cup A_{m+1}^+)}  \\
& \quad +  \left\|   \p_j v \right\|_{L^2(A_{m-1}^+\cup A_m^+ \cup A_{m+1}^+)}.
\end{split}
\end{equation}
Invoking the $H^2$ estimates from Lemma \ref{lem:H2} again, we obtain
\begin{align*}
\left\| |x| \p_{ij} v \right\|_{L^2(A_m^+)}
& \lesssim \tau^2 2^{m}\left\| v\right\|_{L^2(A_{m-1}^+\cup A_m^+ \cup A_{m+1}^+)} + 2^{-m }\left\|f \right\|_{L^2(A_{m-1}^+\cup A_m^+ \cup A_{m+1}^+)}.
\end{align*}
Using a combination of
\begin{align*}
\left\| \p_i a^{ij} \right\|_{L^{n+1}(A_m^+)}  \leq \delta \mbox{ and } \left\| \p_i a^{ij} \right\|_{L^{n+1}(A_m^+)} \leq 2^{-m}C(a^{ij}),
\end{align*}
we again sum these contributions over $m$:
\begin{align*}
\left\| |x|(\p_i a^{ij})\p_j v \right\|_{L^2(B_1^+)}
&\lesssim \sum\limits_{m\in \N} \left\| \p_i a^{ij} \right\|_{L^{n+1}(A_m^+)} \left\| |x| \eta_m \p_j v \right\|_{L^{\frac{2(n+1)}{n-1}}(B_1^+)}\\
& \lesssim \sum\limits_{m\in \N} \left\| \p_i a^{ij} \right\|_{L^{n+1}(A_m^+)} (\tau^2 2^{m}\left\| v\right\|_{L^2(A_{m-1}^+\cup A_m^+ \cup A_{m+1}^+)} \\
& \quad + 2^{-m }\left\|f \right\|_{L^2(A_{m-1}^+\cup A_m^+ \cup A_{m+1}^+)})\\
& \lesssim \sum\limits_{m\in \N} (C(a^{ij})\tau^2 2^{m(1-\gamma)}\left\| v\right\|_{L^2(A_{m-1}^+\cup A_m^+ \cup A_{m+1}^+)} \\
& \quad + \delta 2^{-m }\left\|f \right\|_{L^2(A_{m-1}^+\cup A_m^+ \cup A_{m+1}^+)})\\
& \lesssim C(a^{ij})\tau^2 \left\| |x|^{-1+\gamma} v\right\|_{L^2(B_1^+)}  + \delta \left\||x| f \right\|_{L^2(B_1^+)}.
\end{align*}
This can be absorbed into the right hand side of the Carleman inequality (\ref{eq:vCarl}).
For the $\tau |x|(\p_i a^{ij})(\p_j \phi) v$ contribution we argue analogously and include the resulting error contributions in the right hand side of the Carleman inequality.
\end{proof}

\begin{rmk}
\label{rmk:regularization}
Using regularizations $f_m \in L^p$ with $p>n+1$ of the inhomogeneity $f$ and regularizations $a^{ij}_{m}\in W^{1,p}$ with $p>n+1$ of the metric $a^{ij}$, such that
\begin{align*}
f_m \rightarrow f \in L^{n+1}(A_{\rho,r}^+) \mbox{ and } a^{i j}_m \rightarrow a^{ij} \mbox{ in } W^{1,n+1}(A_{\rho,r}^+) \mbox{ as } m \rightarrow \infty,
\end{align*}
would have allowed us to work with $C^1$ solutions, $w_m$, of the regularized problem (\ref{eq:problem}), since on the regularized level the regularity theory of Uraltseva \cite{U87} is available. Passing to the limit in the regularization parameter and observing that (\ref{eq:vCarl}) does not depend on the higher $L^p$ norms of the regularizations, would have provided another strategy of deriving the Carleman inequality (\ref{eq:vCarl}) and of interpreting the associated boundary conditions.
\end{rmk}

\begin{rmk}[Antisymmetric part]
\label{rmk:anti}
It is possible to strengthen the Carleman inequality (\ref{eq:vCarl}) by including the antisymmetric part into the left hand side.
As in \cite{Rue}, an additional bound can be deduced from the antisymmetric part of the Carleman estimate for functions which are supported in an annulus of the form $A_{r_1,r_2}^+$. In Cartesian coordinates this estimate reads
\begin{align*}
&\tau^2  \left\| |x|^{-1} e^{\tau \phi} w \right\|_{L^2(A_{r_1, r_2}^+) }^2 \lesssim  \ (\ln(r_2/r_1))^2\left\|e^{\tau \phi}  |x| \D w \right\|_{L^2(A_{r_1, r_2}^+) }^2.
\end{align*}
Thus, for functions which are supported in $A_{r_1, r_2}^+$, the Carleman estimate from Proposition \ref{prop:varmet} can be strengthened to
\begin{equation}
\label{eq:vCarl_a}
\begin{split}
&\tau^{\frac{3}{2}} \left\|  e^{\tau \phi}|x|^{-1} (1+\ln(|x|)^2)^{-\frac{1}{2}}  w \right\|_{L^2(A_{r_1, r_2}^+)} 
+  \tau^{\frac{1}{2}}\left\| e^{\tau \phi} (1+\ln(|x|)^2)^{-\frac{1}{2}}  \nabla w \right\|_{L^2(A_{r_1,r_2}^+)}\\
&+ \tau (\ln(r_2/r_1))^{-1}\left\| |x|^{-1} e^{\tau \phi} w \right\|_{L^2(A_{r_1, r_2}^+) }\\
&\leq c_0^{-1}C(n) \left(  
 \tau^2 C(a^{ij}) \left\| e^{\tau\phi}|x|^{\gamma-1} w \right\|_{L^2(A_{r_1,r}^+)} + \left\|e^{\tau \phi} |x| f\right\|_{L^2(A_{r_1,r_2}^+)} \right),
\end{split}
\end{equation}
for all $\tau>1$.
We will use this observation in Corollary \ref{cor:consequence_Carl}.
\end{rmk}

In Section \ref{sec:homo}, we will search for homogeneous blow-up solutions. The argument for the existence of these homogeneous blow-up solutions will rely on an improved Carleman estimate, which states that if a solution is far from homogeneous (c.f. (\ref{eq:nonhomo})), it is possible to work with non-convex weight functions (c.f. condition (b)). 

\begin{lem}
\label{lem:varmet1}
Let $\tau>1$, $n\geq 2$. Let $\rho, r$ be two radii with $0<\rho<r<1$.
Assume that $w\in H^1(B_1^+)$ with $\supp(w) \subset \overline{A^+_{\rho, r}}$ satisfies the following condition:
There exist $\mu>0$ and $\epsilon\in (0,1)$ such that for all $ j\in \{1,..., \floor{\ln( r/ \rho )}-2\}$
\begin{equation}
\label{eq:nonhomo}
\left\|(x\cdot \nabla - \mu ) w \right\|_{L^2(A_{e^j \rho, e^{j+1} \rho}^+)} \geq \epsilon \left\| w \right\|_{L^2(A_{e^{j} \rho, e^{j+1} \rho}^+)}.
\end{equation}
Moreover, suppose that it is a weak solution of the divergence form equation
\begin{align*}
\partial_i a^{ij}\partial_j w &=f \text{ in } A_{\rho, r}^+,\\
w \geq 0, \ \p_{n+1}w \leq 0, \ w \p_{n+1}w &=0 \text{ on } A'_{\rho, r},
\end{align*}
where $f:A_{\rho,r}^+ \rightarrow \R$ is an in $\overline{A_{\rho,r}^+}$ compactly supported $L^2(A_{\rho,r}^+)$ function and $a^{ij}: A_{\rho,r}^+ \rightarrow \R^{n\times n}_{sym}$ is a tensor field such that
\begin{itemize}
\item[(i)] $a^{ij}\in W^{1,n+1}(A_{\rho,r}^+)$, 
\item[(ii)] the off-diagonal assumption (A1),
\item[(iii)] the uniform ellipticity assumption (A2),
\item[(iv)] and the following smallness condition hold: There exists $\delta = \delta(n, \epsilon, \mu)>0$ such that 
\begin{equation}
\label{eq:small1}
\begin{split}
\tau^2\sup\limits_{\rho \leq \bar{r} \leq r} \left\| \nabla a^{ij} \right\|_{L^{n+1}(A_{\bar{r}, e \bar{r}}^+)} &\leq \delta \mbox{ and }\\
\tau^2\sup\limits_{\rho \leq \bar{r} \leq r} |a^{ij}(x)-\delta^{ij}|&\leq \delta.
\end{split}
\end{equation}
\end{itemize}
Let $\tilde{\phi}:\R\rightarrow \R$, $\tilde{\phi}\in C^{4}$, be a radial weight function satisfying the following assumptions:
\begin{itemize}
\item[(a)] $|\tau \tilde{\phi}'(t) +\mu + \frac{n-1}{2}| \leq \frac{\epsilon}{2}$,
\item[(b)] $\tau\tilde{\phi}''(t) \geq  -\frac{\tilde{\epsilon}}{10 }$,
\item[(c)] $  \tau|\tilde{\phi}^{''''}(t)|\leq \frac{\tilde{\epsilon}}{10}$,
\end{itemize}
where $\tilde{\epsilon}:= \frac{\epsilon}{16} e^{-\left( n + \mu \right)}$.
Set $\phi(x)= \tilde{\phi}(\ln(|x|))$.
Then we have
\begin{equation}
\label{eq:vCarl1}
\begin{split}
& \left\|  e^{\tau \phi}|x|^{-1} w \right\|_{L^2(A_{\rho, r}^+)} 
+  \left\| e^{\tau \phi}  \nabla w \right\|_{L^2(A_{\rho,r}^+)} \leq C(n, \epsilon, \mu) \left\|e^{\tau \phi} |x| f\right\|_{L^2(A_{\rho,r}^+)}.
\end{split}
\end{equation} 
\end{lem}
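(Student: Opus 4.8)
The plan is to run the proof of Proposition~\ref{prop:varmet} essentially verbatim, with one substantial change: the positivity of the commutator, which there relied on strict pseudoconvexity of the weight, is no longer available (assumption (b) only forces $\tau\tilde\phi''\ge-\tilde\epsilon/10$), and must be replaced by a coercivity gain harvested from the non-homogeneity hypothesis \eqref{eq:nonhomo}. First I would reduce to the Laplacian exactly as in \eqref{eq:perturb}: writing $\D w=\p_i(a^{ij}-\delta^{ij})\p_j w+f$, expanding $e^{\tau\phi}|x|\,\p_i(a^{ij}-\delta^{ij})\p_j w$ in Cartesian coordinates, and invoking the $H^2$-estimates of Lemma~\ref{lem:H2} on dyadic half-annuli (available since \eqref{eq:small1} implies \eqref{eq:small}), one produces error terms bounded by a constant times
\[\tau^2\Big(\sup_{\rho\le\bar r\le r}|a^{ij}-\delta^{ij}|+\sup_{\rho\le\bar r\le r}\big\|\nabla a^{ij}\big\|_{L^{n+1}(A_{\bar r,e\bar r}^+)}\Big)\Big(\big\|e^{\tau\phi}|x|^{-1}w\big\|_{L^2}+\big\|e^{\tau\phi}\nabla w\big\|_{L^2}\Big)+\big\|e^{\tau\phi}|x|f\big\|_{L^2}.\]
The point is that, unlike in Proposition~\ref{prop:varmet}, one uses the \emph{uniform} bounds on $a^{ij}-\delta^{ij}$ and $\nabla a^{ij}$ rather than their $|x|^\gamma$-decay, which is precisely why the two smallness conditions in \eqref{eq:small1} carry the factor $\tau^2$; for $\delta=\delta(n,\epsilon,\mu)$ small these errors get absorbed into the left-hand side of \eqref{eq:vCarl1}, so it remains to prove \eqref{eq:vCarl1} with $\D$ in place of $\p_i a^{ij}\p_j$.

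Next I would perform the conformal change of variables of Proposition~\ref{prop:varmet}: set $|x|=e^t$, $w=e^{-\frac{n-1}{2}t}e^{-\tau\phi}u$ on $M:=(\ln\rho,\ln r)\times S^n_+$, so that the model operator $\dt^2-\frac{(n-1)^2}{4}+\D_{S^n}$, conjugated by $e^{\tau\tilde\phi}$, splits into the symmetric and antisymmetric parts $S,A$ of \eqref{eq:op}; in these coordinates the desired estimate reads $\|u\|_{L^2(M)}+\|\nabla_{(t,S^n)}u\|_{L^2(M)}\lesssim_{n,\epsilon,\mu}\|(S+A)u\|_{L^2(M)}$. Expanding $\|(S+A)u\|_{L^2(M)}^2=\|Su\|^2+\|Au\|^2+([S,A]u,u)+(\text{boundary})$, the boundary terms --- multiples of $(x\cdot\nabla w)\p_{n+1}w$ and $w\,\p_{n+1}w$ on $B_1'$, cf.\ \eqref{eq:boundary_cont} --- vanish by the Signorini conditions together with the $H^2$-regularity, exactly as before. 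The commutator is still $4\tau((\dt^2\tilde\phi)\dt u,\dt u)+4\tau^3((\dt^2\tilde\phi)(\dt\tilde\phi)^2u,u)-\tau((\dt^4\tilde\phi)u,u)$; it is no longer positive, but by (a), (b), (c) it is bounded below by $-C(n,\mu)\,\tilde\epsilon\big(\|\dt u\|_{L^2(M)}^2+\|u\|_{L^2(M)}^2\big)$, i.e.\ the loss is governed by the small parameter $\tilde\epsilon$.

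The crux --- and what I expect to be the main obstacle --- is to recover coercivity from \eqref{eq:nonhomo}. Transferring \eqref{eq:nonhomo} through the change of variables, and using (a), one gets: on each unit interval $I_j\subset(\ln\rho,\ln r)$ in the $t$-variable,
\[\big\|(\dt-b)u\big\|_{L^2(I_j\times S^n_+)}\ \ge\ \tilde\epsilon_1\,\|u\|_{L^2(I_j\times S^n_+)},\qquad b:=\tau\tilde\phi'+\mu+\tfrac{n-1}{2},\ \ |b|\le\tfrac\epsilon2\ \text{ by (a)},\]
with $\tilde\epsilon_1=\tilde\epsilon_1(n,\mu)\,\epsilon>0$ (the loss of a factor of order $e^{-(n+\mu)/2}$ coming from the oscillation of the weight $e^{2t}e^{-2\tau\phi}$ across a dyadic shell); summing in $j$ yields $\|(\dt-b)u\|_{L^2(M)}\ge\tilde\epsilon_1\|u\|_{L^2(M)}$. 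On the other hand, by (a) the antisymmetric operator decomposes as $A=-2(\tau\tilde\phi')(\dt-b)+R$, where $-2\tau\tilde\phi'\ge2\mu>0$ and $R$ is multiplication by $2(\tau\tilde\phi')b+\tau\dt^2\tilde\phi$, whose size is controlled by $\epsilon$ and $\tilde\epsilon$ via (a), (b). Combining these two facts --- and this is exactly where the calibration $\tilde\epsilon=\frac{\epsilon}{16}e^{-(n+\mu)}$ is forced, so that the coercivity gain dominates both the commutator loss and the error produced by $R$ --- one arrives at $\|Au\|_{L^2(M)}\gtrsim_{n,\epsilon,\mu}\|u\|_{L^2(M)}$, hence
\[\|(S+A)u\|_{L^2(M)}^2\ \ge\ \|Su\|^2+\|Au\|^2+([S,A]u,u)\ \gtrsim_{n,\epsilon,\mu}\ \|u\|_{L^2(M)}^2+\|Su\|_{L^2(M)}^2.\]

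Finally I would upgrade to the full gradient and conclude. Testing $S$ against $u$ and discarding the boundary term via the Signorini conditions gives $\|\dt u\|^2+\|\nabla_{S^n}u\|^2=-(Su,u)+\|(\tau\tilde\phi')u\|^2-\frac{(n-1)^2}{4}\|u\|^2\le\|Su\|\,\|u\|+C(n,\mu)\|u\|^2$, so $\|\nabla_{(t,S^n)}u\|_{L^2(M)}^2\lesssim_{n,\mu}\|Su\|_{L^2(M)}^2+\|u\|_{L^2(M)}^2$; feeding this back into the commutator-loss bound (and using $\tilde\epsilon$ small) shows $\|Su\|_{L^2(M)}^2\lesssim_{n,\epsilon,\mu}\|(S+A)u\|_{L^2(M)}^2+\|u\|_{L^2(M)}^2$, which together with $\|u\|_{L^2(M)}^2\lesssim_{n,\epsilon,\mu}\|(S+A)u\|_{L^2(M)}^2$ from the previous step yields the conformal-coordinate estimate. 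Undoing the change of variables (recalling $dx=e^{(n+1)t}\,dt\,d\mathcal H^n$) turns this into \eqref{eq:vCarl1} for $\D$, and re-inserting the error terms from the first step --- already shown absorbable by \eqref{eq:small1} --- completes the argument. The genuine difficulty throughout is the bookkeeping in the third paragraph: verifying quantitatively that the non-homogeneity gain, degraded by the weight's oscillation over a dyadic annulus, really beats the commutator and $R$ errors, which is what dictates both the exponential smallness built into $\tilde\epsilon$ and the $\tau^2$-weighted smallness in \eqref{eq:small1}.
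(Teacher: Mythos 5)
You follow the paper's route in broad strokes --- reduction to the Laplacian via \eqref{eq:perturb} with errors absorbed via the $\tau^2$-weighted smallness \eqref{eq:small1}, conformal change to $(t,S^n)$-coordinates, $S/A$-splitting, vanishing of the Signorini boundary terms, and recovering coercivity from \eqref{eq:nonhomo} rather than from strict pseudoconvexity. The first, second, and fourth paragraphs of your plan match the paper's proof.

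The gap is in the third paragraph, where you decompose $A=-2(\tau\tilde\phi')(\dt-b)+R$ with $R$ multiplication by $-\tau\dt^2\tilde\phi-2(\tau\tilde\phi')b$ (note the sign error in your writeup, which is immaterial), and claim that $|R|$ ``is controlled by $\epsilon$ and $\tilde\epsilon$ via (a), (b).'' Hypothesis (b) gives only a \emph{lower} bound $\tau\tilde\phi''\ge -\tilde\epsilon/10$; it says nothing about how large and positive $\tau\tilde\phi''$ can be on the regions where the weight is convex. In such a region, a function $u\approx e^{\lambda t}$ with $\lambda\approx \tau\dt^2\tilde\phi/(-2\tau\tilde\phi')$ satisfies $Au\approx 0$ while $\|(\dt-b)u\|$ is large, so the global lower bound $\|Au\|_{L^2(M)}\gtrsim\|u\|_{L^2(M)}$ on which your argument rests is simply false. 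The way out --- which is what the paper does --- is to split $M$ into the regions where $\tilde\phi$ is convex and where it is not. On the convex part the first two commutator terms $4\tau(\dt^2\tilde\phi)(\dt u)^2+4\tau^3(\dt^2\tilde\phi)(\dt\tilde\phi)^2u^2$ are already nonnegative and, when $\tau\dt^2\tilde\phi$ is large, provide the coercivity there (as in Proposition~\ref{prop:varmet}); only on the nonconvex part is $|\tau\dt^2\tilde\phi|\le\tilde\epsilon/10$, which is what makes the bound $\|Au\|_{L^2(U)}\ge(\mu+\tfrac{n-1}{2}-\tfrac\epsilon2)\|\dt u\|_{L^2(U)}-\tfrac{\tilde\epsilon}{10}\|u\|_{L^2(U)}$ of \eqref{eq:antisymm1} available, and it is there that the non-homogeneity \eqref{eq:nonhomo2} and the boundary Poincar\'e inequality \eqref{eq:ann_bound} are fed in. Without this case analysis the decomposition of $A$ does not close.
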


Here assumption (\ref{eq:nonhomo}) allows for slightly concave weight functions satisfying the hypotheses (a)-(c). While this permits us to strengthen the Carleman estimate by using (slightly) non-concave weight functions $\tilde{\phi}$, we have slight losses in the powers of $\tau$ that appear as (polynomial) factors in the estimate (\ref{eq:vCarl1}).

\begin{rmk}
\begin{itemize}
\item As before, estimate (\ref{eq:vCarl1}) remains valid for $n=1$ with only slight modifications.
\item Similarly as in Remark \ref{rmk:tau}, the Carleman estimate remains valid for any $\tau >0$.
\end{itemize}
\end{rmk}

\begin{proof}
We argue along the lines of the proof of the Carleman estimate of Proposition \ref{prop:varmet}. However, in the setting of Lemma \ref{lem:varmet1} assumption (\ref{eq:nonhomo}) allows us to bound the commutator contributions, in spite of a potential (slight) concavity of the weight $\tilde{\phi}$. To this end we have to estimate the commutator contributions
\begin{equation}
\label{eq:nonconvex_error}
4\tau ((\dt^2 \tilde{\phi}) \dt u, \dt u)_{L^2(M)} + 4 \tau^3 ((\dt^2 \tilde{\phi})(\dt \tilde{\phi})^2 u, u)_{L^2(M)}  - \tau ((\dt^4 \tilde{\phi}) u,u)_{L^2(M)}.
\end{equation}
Due to the combination of assumptions (a)-(c) and the ``non-homogeneity condition'' (\ref{eq:nonhomo}), it is possible to absorb these contributions into the antisymmetric part of the operator, whenever the weight $\tilde{\phi}$ becomes (slightly) concave. Indeed, after the changes into conformal coordinates and after setting $u=e^{\frac{n-1}{2}t}e^{\tau \tilde{\phi}} w$ as well as $t:=\ln(r)$, $L_0:=\floor{\ln(r/\rho)}$, (\ref{eq:nonhomo}) turns into
\begin{equation*}
\begin{split}
&\left\| e^{-\tau \phi - \frac{n-1}{2}t}\left(\dt u - \mu u - \frac{n-1}{2}u - \tau \dt \tilde{\phi}  \right) \right\|_{L^2((t+j,t+j+1)\times S^{n}_+)}\\
& \geq \epsilon \left\| e^{-\tau \phi - \frac{n-1}{2}t} u\right\|_{L^2((t+j,t+j+1)\times S^{n}_+)}, \quad  j=1,\dots, L_0-2.
\end{split}
\end{equation*}
Thus, for $j=1,\dots, L_0-2$,
\begin{align*}
\left\| e^{-\tau \phi - \frac{n-1}{2}t}\dt u\right\|_{L^2((t+j,t+j+1)\times S^{n}_+)}\geq \epsilon \left\| e^{-\tau \phi - \frac{n-1}{2}t} u\right\|_{L^2((t+j,t+j+1)\times S^{n}_+)}\\-\left\| e^{-\tau \phi - \frac{n-1}{2}t}\left(\mu  + \frac{n-1}{2} + \tau \dt \tilde{\phi}  \right) u\right\|_{L^2((t+j,t+j+1)\times S^{n}_+)}.
\end{align*}
Recalling our assumption (a), we hence obtain
\begin{equation}
\label{eq:nonhomo1}
\left\| e^{-\tau \phi - \frac{n-1}{2}t}\dt u\right\|_{L^2((t+j,t+j+1)\times S^{n}_+)}\geq \frac{\epsilon}{2} \left\| e^{-\tau \phi - \frac{n-1}{2}t} u\right\|_{L^2((t+j,t+j+1)\times S^{n}_+)}.
\end{equation}
Invoking the fundamental theorem of calculus to estimate 
\begin{align*}
|\tau \phi(t_1)- \tau \phi(t_2)| \leq \int\limits_{t+j}^{t+j+1}|\tau \tilde{\phi}'(s)|ds \leq \left( \mu + \frac{n-1}{2} + \frac{\epsilon}{2} \right),
\end{align*}
for all $t_1,t_2 \in (t+j,t+j+1)$, permits us to further bound the quantities in (\ref{eq:nonhomo1}): For $j=1,\dots, L_0-2$
\begin{equation}
\label{eq:nonhomo2}
\begin{split}
\left\| \dt u \right\|_{L^2((t+j,t+j+1)\times S^{n}_+)} &\geq \frac{\epsilon}{2}e^{-\left( \mu+ n-1  +\frac{\epsilon}{2}\right)} \left\|  u\right\|_{L^2((t+j,t+j+1)\times S^{n}_+)}\\
&\geq 8 \tilde{\epsilon} \left\|  u\right\|_{L^2((t+j,t+j+1)\times S^{n}_+)} .
\end{split}
\end{equation}
At the boundary annuli (which correspond to $j=0$ or $j=L_0-1$) we do not have the ``non-homogeneity assumption'' (\ref{eq:nonhomo}) at our disposal. Yet Poincar\'e's inequality and the compact support of $u$ imply
\begin{equation}
\label{eq:ann_bound}
\begin{split}
\|u\|_{L^2(t,t+1)}&\lesssim \|\partial_t u\|_{L^2(t,t+1)},\\
\|u\|_{L^2(t+L_0-1, t+L_0+1)}&\lesssim \|\partial_t u\|_{L^2(t+L_0-1, t+L_0+1)}.
\end{split}
\end{equation}
Using (\ref{eq:nonhomo2}) as well as (\ref{eq:ann_bound}), we can proceed to control the commutator contributions in (\ref{eq:nonconvex_error}) from the Carleman inequality. In the regions in which $\tilde{\phi}$ is (locally) convex, we argue as in the proof of Proposition \ref{prop:varmet}. Hence, it suffices to consider the regions in which $\tilde{\phi}$ is not convex. Let $U\subset M$ be such a region. By assumption (b), we have
\begin{equation}
\label{eq:antisymm1}
\begin{split}
\left\|  A u \right\|_{L^2(U)} &= \tau \left\| \dt \tilde{\phi} \dt u - \dt^2 \tilde{\phi} u \right\|_{L^2(U)} \geq  \tau \left\| \dt \tilde{\phi} \dt u \right\|_{L^2(U)} - \tau \left\| \dt^2 \tilde{\phi} u \right\|_{L^2(U)}\\
& \geq \left( \mu + \frac{n-1}{2} - \frac{\epsilon}{2} \right) \left\| \dt u \right\|_{L^2(U)} -  \frac{\tilde{\epsilon}}{10}\left\| u\right\|_{L^2(U)}.
\end{split}
\end{equation}
The gradient contributions in (\ref{eq:nonconvex_error}), are immediately controlled by combining the estimates (\ref{eq:antisymm1}) and (\ref{eq:nonhomo2}) (or respectively (\ref{eq:antisymm1}) and (\ref{eq:ann_bound}) if $U$ contains the boundary annuli). For the $L^2$ contributions in (\ref{eq:nonconvex_error}) we use (\ref{eq:nonhomo2}), (\ref{eq:antisymm1}) and the conditions (a)-(c) (or an analogous argument in which (\ref{eq:nonhomo2}) is replaced by (\ref{eq:ann_bound})). After having absorbed the error contributions from (\ref{eq:nonconvex_error}) into $\frac{1}{2}\left\| A u \right\|_{L^2(M)} $, the remaining $\frac{1}{2}\left\| A u \right\|_{L^2(M)} $ contribution then yields an $L^2$ lower bound (by virtue of (\ref{eq:nonhomo2}), (\ref{eq:ann_bound}) and (\ref{eq:antisymm1})). Moreover, an estimate similar as in (\ref{eq:gradup}) (without the $\dt^2 \tilde{\phi}$ weight) allows us to complement the $L^2$ contributions by gradient contributions. Thus, we arrive at an analogue of equation (\ref{eq:vCarl00}):
\begin{align*}
& \left\|  e^{\tau \phi}|x|^{-1} w \right\|_{L^2(A_{\rho, r}^+)} 
+  \left\| e^{\tau \phi}  \nabla w \right\|_{L^2(A_{\rho,r}^+)} \leq C(n,\epsilon,\mu) \left\| e^{\tau \phi}|x|\Delta w \right\|_{L^2(A_{\rho,r}^+)}.
\end{align*}
We stress that in contrast to (\ref{eq:vCarl00}), we now do not face logarithmic losses since we have (\ref{eq:nonhomo}) at our disposal.\\
After this observation, the remainder of the proof of Lemma \ref{lem:varmet1} essentially follows along the same lines as in the proof of the Carleman estimate from Proposition \ref{prop:varmet}. We only consider small modifications which allow us to absorb the error contributions on the right hand side of (\ref{eq:vCarl1}) into the left hand side immediately. 
We briefly comment on these modifications. Instead of estimating the differences $|a^{ij}-\delta^{ij}|$ in terms of $C(a^{ij})$ as in step 3 of the proof of Proposition \ref{prop:varmet}, we  use the two smallness conditions from (\ref{eq:small1}). This becomes possible as we do not have to gain a factor $|x|^{\gamma}$, since we do not have logarithmic losses on the left hand side of (\ref{eq:vCarl1}).
\end{proof}

We will use the modified Carleman estimate from Lemma \ref{lem:varmet1} in proving Lemma \ref{lem:almosthom} in Section \ref{sec:homo}. There we deduce a condition which ensures closeness to homogeneous functions (along sequences of certain radii).

\subsection{Consequences of the Carleman estimate (\ref{eq:vCarl})}
We continue by deriving a corollary from the Carleman estimate. For us the corollary will play a fundamental role in the following section. In particular it will be used to deduce bounds on the vanishing order (c.f. Proposition \ref{prop:indep}, Corollary \ref{cor:growth1} and Lemma \ref{lem:growthimp}), the upper semi-continuity of the free boundary (c.f. Proposition \ref{prop:semi_cont}) and doubling properties for solutions of (\ref{eq:varcoef}) (c.f. Proposition \ref{prop:doubling}).

\begin{cor}[Three spheres inequality]
\label{cor:consequence_Carl}
Let $\tilde{\phi}$ be the weight function from Proposition \ref{prop:varmet}.
Suppose that $w:B_{1}^+ \rightarrow \R$ is a solution of the thin obstacle problem (\ref{eq:varcoef}) satisfying (A0)-(A4) and that $x_0 \in B_{1/2}^+$. Assume that $1\leq \tau<\tau_0 < \infty$. Assume that $R_0= R_0(\tau_0,n,p,\|\nabla a^{ij}\|_{L^p})\in(0,\frac{1}{2})$ is the radius from Remark \ref{rmk:Carl_application} and that $0<r_1< r_2<r_3\leq R_0 $ and $r_1 \leq \frac{2}{3}r_3$. Then there is a constant $C=C(n)$ such that
\begin{equation}
\label{eq:consequence_Carl1}
\begin{split}
&\tau^{\frac{3}{2}}(1+|\ln(r_2)|)^{-1}  e^{\tau \tilde{\phi}(\ln(r_2))} r_2^{-1}\left\| w \right\|_{L^2(A^+_{r_2,2r_2}(x_0))}\\
&+ \tau \max\{\ln(r_2/r_1)^{-1},\ln(r_3/r_2)^{-1}\} e^{\tau \tilde{\phi}(\ln(r_2))}r_2^{-1}  \left\| w \right\|_{L^2(A^+_{r_2,2r_2}(x_0))} \\
& \leq c_0^{-1}C\left( e^{\tau \tilde{\phi}(\ln(r_1))} r_1^{-1} \left\| w \right\|_{L^2(A^+_{r_1,2r_1}(x_0))}  + e^{\tau \tilde{\phi}(\ln(r_3))}r_3^{-1}\left\| w \right\|_{L^2(A^+_{r_3,2 r_{3}}(x_0))}\right).
\end{split}
\end{equation}
\end{cor}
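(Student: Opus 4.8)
The plan is to apply the Carleman estimate (\ref{eq:vCarl}) — or rather its clean form (\ref{eq:vCarl_modified}) together with the antisymmetric refinement (\ref{eq:vCarl_a}) from Remark \ref{rmk:anti} — to a suitable cut-off of $w$ centered at $x_0$. First I would translate so that $x_0$ plays the role of the origin; since $x_0 \in B_{1/2}^+$ and $r_3 \le R_0 < 1/2$, the relevant annuli $A^+_{r_1, 2r_1}(x_0), A^+_{r_2,2r_2}(x_0), A^+_{r_3,2r_3}(x_0)$ all sit inside $B_1^+$, and the hypotheses (A1)–(A4) on $a^{ij}$ translate into the coefficient bounds needed for Proposition \ref{prop:varmet} (with $C(a^{ij})$ controlled via Morrey's inequality as in Remark \ref{rmk:Carl_application}, using $r_3 \le R_0$ to absorb the first term on the right of (\ref{eq:vCarl})). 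The condition $r_1 \le \frac{2}{3} r_3$ guarantees that the outer annulus $\{r_3 < |x| < 2r_3\}$ and the inner region near $\{|x| \sim r_1\}$ are genuinely separated, leaving room for the middle scale $r_2$.

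The key step is the choice of cut-off. I would pick $\eta \in C_c^\infty$ with $\eta \equiv 1$ on the annulus $A_{2r_1, r_3}$ (in the $|x|$ variable) and $\supp \eta \subset A_{r_1, 2r_3}$, with the natural derivative bounds $|\nabla \eta| \lesssim |x|^{-1}$, $|\nabla^2 \eta| \lesssim |x|^{-2}$ adapted to the two transition regions near $|x| = r_1$ and $|x| = r_3$. Crucially, $\eta$ is radial, so $\eta w$ still satisfies the Signorini boundary conditions on $A'_{\rho,r}$ (the cut-off does not touch the sign structure on the thin set), and one checks the product satisfies $\supp(\eta w) \subset \overline{A^+_{\rho,r}}$ with $\rho \sim r_1$, $r \sim r_3$, so Proposition \ref{prop:varmet} applies to $v := \eta w$ with inhomogeneity
\begin{align*}
f = \partial_i a^{ij} \partial_j (\eta w) - \eta \, \partial_i a^{ij} \partial_j w = (\partial_i a^{ij})(\partial_j \eta) w + a^{ij}(\partial_{ij}^2 \eta) w + 2 a^{ij} (\partial_i \eta)(\partial_j w),
\end{align*}
using $\partial_i a^{ij} \partial_j w = 0$. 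This $f$ is supported only in the two transition annuli $A_{r_1,2r_1}$ and $A_{r_3,2r_3}$, and there one has $|x| |f| \lesssim |x|^{-1}|w| + |\nabla w|$. The gradient term on the transition annuli is then reabsorbed: on a fixed annulus the $L^2$ norm of $\nabla w$ for a solution is controlled by the $L^2$ norm of $w$ on a slightly larger annulus via a Caccioppoli / interior energy estimate (these are the ``by hand'' regularity estimates referenced in the paper, and also follow from the $H^2$ bound of Lemma \ref{lem:H2}). Thus $\left\| e^{\tau\phi}|x|f \right\|_{L^2} \lesssim e^{\tau\phi(\ln r_1)} r_1^{-1} \left\| w \right\|_{L^2(A^+_{r_1,2r_1}(x_0))} + e^{\tau\phi(\ln r_3)} r_3^{-1}\left\| w \right\|_{L^2(A^+_{r_3,2r_3}(x_0))}$, where I use that $e^{\tau\phi(\ln|x|)}$ is (essentially) monotone in $|x|$ on each narrow transition annulus, so it can be pulled out at the endpoint radius up to a constant depending only on $c_0$ and $n$. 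This produces the right-hand side of (\ref{eq:consequence_Carl1}).

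For the left-hand side, I would restrict the $L^2$ integrals in (\ref{eq:vCarl}) (and in the antisymmetric bound (\ref{eq:vCarl_a}), applied with $r_1 \mapsto r_1$, $r_2 \mapsto 2r_3$ so that $\ln(r_2/r_1)$ there becomes $\sim \ln(r_3/r_1)$) to the middle annulus $A^+_{r_2, 2r_2}(x_0)$, where $\eta \equiv 1$ so $v = w$. On that annulus $|x| \sim r_2$, $(1 + \ln(|x|)^2)^{-1/2} \sim (1 + |\ln r_2|)^{-1}$, and $e^{\tau\phi(\ln|x|)} \sim e^{\tau\tilde\phi(\ln r_2)}$ up to a $c_0,n$-dependent constant (the weight varies by a bounded factor across a fixed dyadic annulus); discarding the gradient term, this gives exactly the first summand of the left side of (\ref{eq:consequence_Carl1}). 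The antisymmetric contribution (\ref{eq:vCarl_a}), again restricted to $A^+_{r_2,2r_2}(x_0)$, supplies the second summand with the factor $\tau \max\{\ln(r_2/r_1)^{-1}, \ln(r_3/r_2)^{-1}\}$: here one uses that the annulus of support has log-width $\sim \ln(r_3/r_1) \lesssim \max\{\ln(r_2/r_1),\ln(r_3/r_2)\}$ up to a constant (since one of the two factors is at least half the sum), so $(\ln(r_3/r_1))^{-1} \gtrsim \max\{\ln(r_2/r_1)^{-1},\ln(r_3/r_2)^{-1}\}$ fails in general — rather one simply keeps $\tau(\ln(r_3/r_1))^{-1}$ from (\ref{eq:vCarl_a}) and notes $\ln(r_3/r_1) \le \ln(r_2/r_1) + \ln(r_3/r_2) \le 2\max\{\cdots\}$, giving the stated bound. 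Collecting the two pieces yields (\ref{eq:consequence_Carl1}).

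The main obstacle I anticipate is the bookkeeping around the weight $e^{\tau\tilde\phi(\ln|x|)}$ and the logarithmic factor: one must verify that on each of the three dyadic annuli the weight is comparable to its value at the relevant radius with a constant that does \emph{not} blow up with $\tau$ (this uses $|\tilde\phi'| \sim 1$, so $|\tau\tilde\phi(\ln 2r) - \tau\tilde\phi(\ln r)| \lesssim \tau$ — which is \emph{not} bounded!). The resolution is that for $r \le R_0$ the relevant comparison is only across a single dyadic annulus and the factor $e^{C\tau}$ is acceptable because $C$ is absolute and $\tau < \tau_0$ is fixed — but one should state this dependence carefully, or better, use that $\tilde\phi' \to -1$ and arrange $R_0$ so the drift is small; in any case the constant $C$ in (\ref{eq:consequence_Carl1}) is asserted to depend only on $n$, so one must check the $\tau$-powers on both sides are exactly matched (both carry an explicit $\tau$ or $\tau^{3/2}$ factored out front) and that the weight ratios telescope cleanly. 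A secondary technical point is justifying the Caccioppoli-type absorption of $\left\| \nabla w\right\|_{L^2}$ on the transition annuli into $\left\| w \right\|_{L^2}$ on enlarged annuli while keeping the enlarged annuli inside the admissible range $(0, R_0)$ and not overlapping the middle scale; the separation hypothesis $r_1 \le \frac23 r_3$ is precisely what buys this room.
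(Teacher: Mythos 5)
Your overall strategy — radial cut-off adapted to the three scales, apply Proposition \ref{prop:varmet} (in the absorbed form (\ref{eq:vCarl_modified})) to $\eta w$, estimate the commutator inhomogeneity supported in the two transition annuli via Caccioppoli, then restrict the left-hand side to the middle dyadic annulus — is the same as the paper's. The first summand of (\ref{eq:consequence_Carl1}) is handled correctly. However, there is a genuine error in your derivation of the second summand, the one carrying the factor $\tau\max\{\ln(r_2/r_1)^{-1},\ln(r_3/r_2)^{-1}\}$.

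You apply the antisymmetric bound (\ref{eq:vCarl_a}) \emph{once}, on the full support annulus $A^+_{r_1,2r_3}$, which produces a factor $\tau(\ln(r_3/r_1))^{-1}$ on the left. You then cite $\ln(r_3/r_1)\le\ln(r_2/r_1)+\ln(r_3/r_2)\le 2\max\{\ln(r_2/r_1),\ln(r_3/r_2)\}$ as ``giving the stated bound.'' But upon taking reciprocals this chain yields
\begin{equation*}
\left(\ln(r_3/r_1)\right)^{-1}\ \geq\ \tfrac12\,\bigl(\max\{\ln(r_2/r_1),\ln(r_3/r_2)\}\bigr)^{-1}\ =\ \tfrac12\min\bigl\{\ln(r_2/r_1)^{-1},\ln(r_3/r_2)^{-1}\bigr\},
\end{equation*}
i.e.\ the \emph{minimum} of the two reciprocals, not the maximum. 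Concretely, if $\ln(r_2/r_1)=1$ and $\ln(r_3/r_2)=100$, your factor is $(101)^{-1}$ while the Corollary asserts a factor of order $1$. Your proposal therefore only proves a strictly weaker inequality than (\ref{eq:consequence_Carl1}).

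The missing idea is to run the antisymmetric/Poincar\'e argument \emph{twice}, once from each end of the support. Since $\eta w$ vanishes near the inner boundary $|x|\sim r_1$, a one-sided Poincar\'e inequality integrating from the inner end gives a bound on $A^+_{r_1,r_2}$ with factor $\ln(r_2/r_1)$; since $\eta w$ also vanishes near the outer boundary $|x|\sim r_3$, a one-sided Poincar\'e integrating from the outer end gives a separate bound on $A^+_{r_2,r_3}$ with factor $\ln(r_3/r_2)$. Both terms appear on the left of the resulting Carleman inequality (cf.\ (\ref{eq:cutoffcar}) in the paper), and one keeps whichever one is larger — hence the $\max$ of the two reciprocals. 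The compactly-supported form (\ref{eq:vCarl_a}) applied in one shot over the full annulus cannot see this refinement; you need the two half-interval Poincar\'es separately.

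As a minor remark, you correctly flag that replacing $e^{\tau\tilde\phi(\ln 2r_2)}$ by $e^{\tau\tilde\phi(\ln r_2)}$ on the left costs a factor of order $2^{\tau}$; the paper also does this silently. It is harmless because the Corollary is only ever invoked with $\tau<\tau_0$ fixed, so the constant can be absorbed; one should simply acknowledge the $\tau_0$-dependence rather than worry about it. Note also that on the right-hand side transition annuli the comparison of $e^{\tau\phi}$ with its endpoint value goes in the favorable direction (monotonicity of $\tilde\phi$), so there is no loss there.
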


\begin{rmk}
We stress that the constant $C>0$ in Corollary \ref{cor:consequence_Carl} is uniform in the points $x_0\in B_{\frac{1}{2}}^+$.
\end{rmk}

\begin{rmk}
If additionally to the assumptions of Corollary \ref{cor:consequence_Carl}, the estimate \eqref{eq:nonhomo} from Lemma \ref{lem:varmet1} is satisfied, then a similar inequality as in \eqref{eq:consequence_Carl1} holds true for the weight function $\tilde{\phi}$ from Lemma~\ref{lem:varmet1}. More precisely, let $\tau$ and $\tilde{\phi}$ be in Lemma~\ref{lem:varmet1}. Let $R_0=R_0(\mu, \epsilon, \tau, n,p, \|\nabla a^{ij}\|_{L^p})$ be such that \eqref{eq:small1} is satisfied for all $\rho, r\in (0,R_0)$. Then for $0<2 r_1< r_2< \frac{r_3}{2}\leq R_0$,
\begin{equation}
\label{eq:consequence_Carl2}
\begin{split}
&e^{\tau \tilde{\phi}(\ln(r_2))} r_2^{-1}\left\| w \right\|_{L^2(A^+_{r_2,2r_2}(x_0))}\\
& \leq C(n,\mu, \epsilon)\left( e^{\tau \tilde{\phi}(\ln(r_1))} r_1^{-1} \left\| w \right\|_{L^2(A^+_{r_1,2r_1}(x_0))}  + e^{\tau \tilde{\phi}(\ln(r_3))}r_3^{-1}\left\| w \right\|_{L^2(A^+_{r_3,2 r_{3}}(x_0))}\right).
\end{split}
\end{equation}
\end{rmk}

\begin{proof}
We only prove the statements for $x_0 =0$ and consider the Carleman weights, $\phi$ and $\tilde{\phi}$, from Proposition \ref{prop:varmet}.
Let $\eta$ be a cut-off function supported in $A_{5r_1/4,3r_3/2}$ with an only radial dependence such that
\begin{equation}
\label{eq:etab}
\begin{split}
\eta=1 \text{ in } A_{3 r_1/2, 5 r_3/4}, \quad \eta=0 \text{ in } \R^{n+1}\setminus A_{5 r_1/4, 3r_3/2},\\
|\nabla \eta |\leq \frac{2}{r_1} \text{ in } A_{5r_1/4, 3r_1/2}, \quad |\nabla \eta |\leq \frac{2}{r_3} \text{ in } A_{5r_3/4, 3r_3/2}.
\end{split}
\end{equation}
Then, as a consequence of the radial dependence of $\eta$ and the support assumptions on it, $\tilde{w}=w\eta$ is supported in $A_{5r_1/4,3r_3/2}$ and satisfies 
\begin{align*}
\partial_i(a^{ij}\partial_j \tilde{w})&=\partial_i(a^{ij}\partial_j (\eta w)) \\
&= 2 a^{ij}(\p_j \eta) \p_i w + a^{ij}(\p_{ij}\eta)w + (\p_{i}a^{ij})(\p_j \eta)w \text{ in } A^+_{r_1,2r_3},\\
\partial_{n+1} \tilde{w}&= \eta \p_{n+1}w = 0 \mbox{ on } A'_{r_1,2r_3}\setminus \Lambda_w.
\end{align*}
We notice that since $a^{ij}\in W^{1,p}$ and since $R_0$ is chosen as in Remark \ref{rmk:Carl_application}, the assumptions of the Carleman estimate (\ref{eq:vCarl}) are satisfied. 
Inserting $\tilde{w}$ into the Carleman estimate and by using the antisymmetric part of the operator combined with Poincar\'e's inequality (c.f. Remark \ref{rmk:anti}), i.e. respectively estimating 
\begin{align*}
\tau \||x|^{-1} v\|_{L^2(A_{2 r_1, r_2}^+)}\lesssim (\ln(r_2/r_1))\|Av\|_{L^2(A_{r_1, r_2}^+)},\\
\tau \| |x|^{-1} v\|_{L^2(A_{2 r_2, r_3}^+)}\lesssim (\ln(r_3/r_2))\|Av\|_{L^2(A_{r_2, r_3}^+)},\\
 \mbox{ where } v=e^{\tau\phi}\tilde{w},
\end{align*} 
leads to
\begin{equation}
\label{eq:cutoffcar}
\begin{split}
& \tau (\ln(r_2/r_1))^{-1} \left\| e^{\tau \phi} |x|^{-1} (w\eta)\right\|_{L^2(A^+_{r_1, r_2})}\\
&+  \tau (\ln(r_3/r_2))^{-1} \left\| e^{\tau \phi} |x|^{-1} (w\eta)\right\|_{L^2(A^+_{r_2, r_3})}\\
&+\tau^{\frac{3}{2}} \left\|  e^{\tau \phi}|x|^{-1} (1+\ln(|x|)^2)^{-\frac{1}{2}}  (w\eta) \right\|_{L^2(A^+_{r_1, 2r_3})} \\
& +\tau^{\frac{1}{2}}\left\| e^{\tau \phi} (1+\ln(|x|)^2)^{-\frac{1}{2}}  \nabla (w\eta) \right\|_{L^2(A^+_{r_1,2r_3})}\\
&\leq c_0^{-1}C(n)\left(   C(a^{ij})\tau^2\left\|e^{\tau \phi}|x|^{\gamma-1}  (w\eta) \right\|_{L^2(A^+_{r_1,2r_3})}\right.\\
& \quad+\left\|e^{\tau \phi} |x| a^{ij}(\partial_{ij}^2 \eta) w\right\|_{L^2(A^+_{r_1,2r_3})} + \left\|e^{\tau \phi} |x| a^{ij}(\partial_j\eta)( \p_i  w)\right\|_{L^2(A^+_{r_1,2 r_3})} \\
& \left.\quad +\left\|e^{\tau \phi} |x| (\p_i a^{ij})(\p_i \eta) w\right\|_{L^2(A^+_{r_1,2r_3})}\right),
\end{split}
\end{equation}
where, in the notation of Proposition \ref{prop:varmet}, $\gamma := 1-\frac{n+1}{p}$ is the Hölder differentiability modulus of $a^{ij}$. Using the fact that $0<\tau\leq \tau_0$ is bounded, we may immediately absorb the first contribution on the right hand side by choosing $R_0=R_0(\tau_0, \gamma,n, \left\| \nabla a^{ij} \right\|_{L^p})>0$ sufficiently small as in Remark~\ref{rmk:Carl_application}.
Using the properties (\ref{eq:etab}) of $\eta$ and the monotonicity of $\tilde{\phi}$, we infer
\begin{equation}
\label{eq:err1}
\begin{split}
&\left\|e^{\tau \phi} |x| a^{ij}(\partial_{i j}^2 \eta) w\right\|_{L^2(A^+_{r_1,r_3})} + \left\|e^{\tau \phi} |x| a^{ij}(\partial_j\eta)( \p_i  w)\right\|_{L^2(A^+_{r_1,r_3})} \\
&\lesssim\  \frac{1}{r_1} \left\|e^{\tau \phi} w\right\|_{L^2(A_{5r_1/4, 3r_1/2}^+)}+\frac{1}{r_3} \left\|e^{\tau \phi} w\right\|_{L^2(A_{5r_3/4, 3r_3/2}^+)}\\
&\quad +  \left\|e^{\tau \phi} \nabla w\right\|_{L^2(A_{5r_1/4, 3 r_1/2}^+)}+ \left\|e^{\tau \phi} \nabla w\right\|_{L^2(A_{5 r_3/4, 3 r_3/2}^+)}\\
& \lesssim\  \frac{1}{r_1} e^{\tau \tilde{\phi}(\ln(r_1))} \left\| w\right\|_{L^2(A_{5r_1/4, 3r_1/2}^+)}+\frac{1}{r_3} e^{\tau \tilde{\phi}(\ln(r_3))} \left\| w\right\|_{L^2(A_{5r_3/4, 3r_3/2}^+)}\\
&\quad +  e^{\tau \tilde{\phi}(\ln(r_1))} \left\| \nabla w\right\|_{L^2(A_{5r_1/4, 3 r_1/2}^+)}+ e^{\tau \tilde{\phi}(\ln(r_3))} \left\| \nabla w\right\|_{L^2(A_{5 r_3/4, 3 r_3/2}^+)}.
\end{split}
\end{equation}
In order to control the last term in (\ref{eq:cutoffcar}), we use the support condition on $\eta$ to pull out the exponential factor $e^{\tau \phi}$ and to estimate the remainder via a combination of Hölder's inequality and Sobolev's inequality:
\begin{equation}
\label{eq:err2}
\begin{split}
&\left\| |x|(\p_i a^{ij})(\p_j \eta)w \right\|_{L^2(A^+_{r_1, 2r_3})} \leq \left\| \p_i a^{ij}\right\|_{L^{n+1}(A^+_{r_1, 2r_3})}\left\|  w|x|(\p_j \eta) \right\|_{L^{\frac{2(n+1)}{n-1}}(A^+_{r_1, 2r_3})}\\
& \leq C(n) \left\| \p_i a^{ij}\right\|_{L^{n+1}(A^+_{r_1, 2r_3})}\left( \left\| (\nabla^2 \eta) |x| w \right\|_{L^2(A^+_{r_1, 2r_3})} + \left\| w \nabla \eta  \right\|_{L^2(A^+_{r_1, 2r_3})} \right. \\ \\
& \quad \left.+ \left\| |x| \nabla w \cdot \nabla \eta \right\|_{L^2(A^+_{r_1, 2r_3})}\right).
\end{split}
\end{equation}
Due to the support assumptions on $\eta$ and the choice of $R_0$, this results in estimates of the type (\ref{eq:err1}).
Pulling out the exponential factors in the remaining contributions, recalling the explicit form of the Carleman weight and combining (\ref{eq:err1}) and (\ref{eq:err2}) with Caccioppoli's inequality (for which we use the boundary conditions for $w$ and $\eta$ on $B_1'$), e.g.
\begin{align*}
\left\| \nabla w \right\|_{L^2(A^+_{5r_1/4,3r_1/2})} \lesssim r_1^{-1}\left\| w\right\|_{L^2(A^+_{r_1,2r_1})},
\end{align*}
results in
\begin{equation}
\label{eq:cutoffcar1}
\begin{split}
&\tau^{\frac{3}{2}}(1+|\ln(r_2)|)^{-1}  e^{\tau \tilde{\phi}(\ln(r_2))}\left\| w \right\|_{L^2(A^+_{r_2,2r_2})}\\
&+ \tau \max\{\ln(r_2/r_1)^{-1},\ln(r_3/r_2)^{-1}\} e^{\tau \tilde{\phi}(\ln(r_2))}r_2^{-1}  \left\| w \right\|_{L^2(A^+_{r_2,2r_2})}\\
&\leq c_0^{-1}C(n)\left(   e^{\tau \tilde{\phi}(\ln(r_1))} r_1^{-1}  \left\| w\right\|_{L^2(A_{r_1, 2r_1}^+)}+ e^{\tau \tilde{\phi}(\ln(r_3))}r_3^{-1}\left\| w\right\|_{L^2(A_{r_3, 2r_3}^+)}\right).
\end{split}
\end{equation}
This is the desired estimate.
\end{proof}

\subsection{Proof of Lemma \ref{lem:H2}}
\label{sec:H2}

We conclude this section with the proof of Lemma \ref{lem:H2}. Here we closely follow the ideas of Uraltseva \cite{U87}, however carefully keeping track of the powers of $\tau$ which are involved.

\begin{proof}[Proof of Lemma \ref{lem:H2}]
Let $\zeta$ be a smooth radial function supported in $B_1(0)\backslash B_{1/8}(0)$, idendically $1$ on $B_{1/2}(0)\backslash B_{1/4}(0)$. We multiply (\ref{eq:H2_est}) by 
$\zeta^2(2^{m-1}x)  v $ and integrate to infer 
\[ \Vert |x|^{-1} \nabla (\zeta(2^{m-1}.)  v) \Vert_{L^2} \lesssim  \tau \Vert |x|^{-2} v \Vert_{L^2(B_{2^{-m+1}}\backslash B_{2^{-m-2}})} +
\Vert  \zeta^2 f \Vert_{L^2} .  \] 
Then standard arguments show that we obtain the desired estimate (\ref{eq:H2}) if we prove it with an additional gradient term on the right hand side of (\ref{eq:H2}).\\

\emph{Step 1: Energy estimates.}
We consider the following penalized problem
\begin{equation}
\label{eq:penalize}
\begin{split}
 \p_i a^{ij}_{\epsilon} \p_j\ve &= g^{\epsilon} \mbox{ in } B_{1}^+,\\
\p_{n+1}\ve &= \beta_{\epsilon}(\ve) \mbox{ on } B_{1}',\\
\ve & = 0 \mbox{ on } \partial B_{1}^+.
\end{split}
\end{equation}
Here $\beta_{\epsilon}\in C^{\infty}(\R)$ is chosen such that
\begin{align*}
\beta_{\epsilon}\leq 0 , \ \beta_{\epsilon}' \geq 0, \ \beta_{\epsilon}(s)= 0 \mbox{ for } s\geq 0, \ \beta_{\epsilon}(s)= \epsilon + \frac{s}{\epsilon} \mbox{ for } s\leq -2\epsilon^2.
\end{align*} 
The function $a^{ij}_{\epsilon}$ is a regularization of $a^{ij}$ and $g^{\epsilon}$ is a regularization of
\begin{equation*}
g:=  f - \tau^2(\p_i \phi) a^{ij}(\p_j \phi) v + \tau a^{ij} (\p_i \phi) \p_j v + \tau (\p_j \phi) \p_i (a^{ij} v) + \tau a^{ij}(\p_{ij}^2 \phi) v.
\end{equation*}
As $a^{ij}_{\epsilon}$ is uniformly elliptic, a solution $\ve$  of (\ref{eq:penalize}) exists and is unique and smooth. Moreover, $\ve$ converges weakly in $H^1$ to a weak solution, $u$, of
\begin{equation}
\label{eq:penalize_limit}
\begin{split}
 \p_i a^{ij} \p_j u &= g \mbox{ in } B_{1}^+,\\
\p_{n+1}u &\leq  0, \ u \geq 0, \ u(\p_{n+1}u)=0 \mbox{ on } B_{1}',\\
u & = 0 \mbox{ on } \partial B_{1}^+.
\end{split}
\end{equation}
Here the boundary conditions in (\ref{eq:penalize_limit}) are interpreted distributionally, i.e. similarly as in Step 1 of the proof of Proposition \ref{prop:varmet}.
As the bilinear form associated with (\ref{eq:penalize_limit}) is coercive, the problem (\ref{eq:penalize_limit}) has a unique weak solution. Since $v$ satisfies (\ref{eq:penalize_limit}), this implies that $v=u$, so in particular, $\ve \rightharpoonup v$ in $H^1(B_1^+)$. \\

We now proceed to the $H^2$ estimate for $v^{\epsilon}$. Multiplying the equation with a compactly supported test function $\eta$, we start with the identity
\begin{align*}
\int\limits_{B_{1}^+} a^{ij}\p_j \ve \p_i \eta dx &= - \int\limits_{B_1'}\beta_{\epsilon}(\ve) \eta d\mathcal{H}^{n-1}  - \int\limits_{B_{1}^+} g^{\epsilon} \eta dx, 
\end{align*}
where, as above, $g^{\epsilon}$ is a regularization of $g$:
\begin{align*}
g= f-\tau^2 (\p_{i} \phi) a^{ij} (\p_j \phi)v +\tau a^{ij}(\p_i \phi) \p_j v +\tau (\p_j \phi)\p_i(a^{ij}v) +\tau a^{ij} (\p_{ij}^2 \phi) v. 
\end{align*}
We will prove 
\begin{equation*}
\begin{split}  
\Vert \nabla^2 \ve \Vert_{L^2(B_{2^{-m}}^+ \backslash B_{2^{-m-1}}^+)} 
&\le c \tau^2 \Vert |x|^{-2} \ve \Vert_{L^2 (B_{2^{1-m}}^+ \backslash B_{2^{-m-2}}^+)}\\
& \quad + c \Vert g^\epsilon  \Vert_{L^2 (B_{2^{1-m}}^+ \backslash B_{2^{-m-2}}^+)}.
\end{split}
\end{equation*}

Choosing $\eta = \partial_{\mu}\tilde{\eta}$, with $\mu \in \{1,...,n\}$ being a tangential direction, and integrating by parts, results in
\begin{align*}
\int\limits_{B_{1}^+} a^{ij}\p_{j\mu} \ve \p_i \tilde{\eta} dx + \int\limits_{B_{1}^+} (\p_{\mu} a^{ij})\p_{j\mu} \ve \p_i \tilde{\eta} dx &= - \int\limits_{B_1'}\beta_{\epsilon}'(\ve)\p_{\mu}\ve \tilde{\eta} d\mathcal{H}^{n-1}\\
& \quad  + \int\limits_{B_{1}^+} g^{\epsilon}\p_{\mu}\tilde{\eta} dx.
\end{align*}
We then set $\tilde{\eta}:= \zeta^2 \p_{\mu}\ve$ where $\zeta$ is a smooth cut-off function which is essentially localized in $A_m$ for some arbitrary but fixed value of $m\in \N$:
\begin{align*}
&\int\limits_{B_1^+}a^{ij}\p_{j\mu}\ve \p_{i\mu}\ve \zeta^2 dx + 2\int\limits_{B_{1}^+} a^{ij}\p_{j \mu}\ve \p_{\mu}\ve \zeta \p_{i} \zeta dx 
+ 2 \int\limits_{B_{1}^+}(\p_{\mu}a^{ij})\p_j \ve \p_{ \mu}\ve \zeta \p_i \zeta  dx \\
&+ \int\limits_{B_{1}^+}(\p_{\mu}a^{ij}) \p_{j}\ve \p_{i \mu}\ve \zeta^2 dx = - \int\limits_{B_{1}'} \beta_{\epsilon}'(\ve)(\p_{\mu}\ve)^2 \zeta^2 d\mathcal{H}^{n-1} + \int\limits_{B_{1}^+} g^{\epsilon}\p_{\mu}\tilde{\eta} dx\\
&\leq  \int\limits_{B_{1}^+} g^{\epsilon}\p_{\mu}\tilde{\eta} dx,
\end{align*}
for which we noticed that
\begin{align*}
- \int\limits_{B_{1}'} \beta_{\epsilon}'(\ve)(\p_{\mu}\ve)^2 \zeta^2 d\mathcal{H}^{n-1} \leq 0.
\end{align*}
Using the uniform ellipticity of $a^{ij}$ and Young's inequality, we thus obtain
\begin{align*}
&\int\limits_{B_1^+}|\p_{\mu i} \ve|^2 \zeta^2 dx \leq 
-2 \int\limits_{B_{1}^+} a^{ij} \p_{\mu j}\ve \p_{\mu}\ve \zeta \p_{i}\zeta dx 
- 2\int\limits_{B_{1}^+}(\p_{\mu}a^{ij})\p_{j}\ve \p_{\mu}\ve \zeta \p_i \zeta dx \\
&- \int\limits_{B_{1}^+}(\p_{\mu}a^{ij}) \p_j \ve \p_{i\mu}\ve \zeta^2 dx + \int\limits_{B_{1}^+} g^{\epsilon}\p_{\mu}\tilde{\eta} dx\\
&\leq \delta \int\limits_{B_{1}^+}|\p_{j\mu}\ve|^2 \zeta^2 dx + C(\delta)\int\limits_{B_1^+}|\p_{\mu}\ve|^2|\p_{i}\zeta|^2 dx \\
&+ 2\int\limits_{B_{1}^+}|\p_{\mu}a^{ij}||\p_j \ve||\p_{\mu}\ve||\zeta||\p_i \zeta|dx 
+ \int\limits_{B_{1}^+}|\p_{\mu}a^{ij}||\p_j \ve||\p_{i\mu}\ve|\zeta^2 dx \\
&+ \int\limits_{B_{1}^+} g^{\epsilon}\p_{\mu}\tilde{\eta} dx .
\end{align*}
Recalling the equation for $\ve$, this control of mixed tangential-tangential and tangential-normal second order derivatives can be upgraded to a full second order derivative control at the expense of additional error contributions: 
\begin{equation}
\label{eq:2der}
\begin{split}
&\int\limits_{B_1^+}|\nabla^2 \ve|^2 \zeta^2 dx
\leq \delta \int\limits_{B_{1}^+}|\p_{j\mu}\ve|^2 \zeta^2 dx + C(\delta)\int\limits_{B_1^+}|\p_{\mu}\ve|^2|\p_{i}\zeta|^2 dx \\
&+ 2\int\limits_{B_{1}^+}|\p_{\mu}a^{ij}||\p_j \ve||\p_{\mu}\ve||\zeta||\p_i \zeta|dx 
+ \int\limits_{B_{1}^+}|\p_{\mu}a^{ij}||\p_j \ve||\p_{i\mu}\ve|\zeta^2 dx \\
&+ C \int\limits_{B_{1}^+}\zeta^2|\p_i a^{ij}|^2|\p_{j}\ve|^2dx + \int\limits_{B_{1}^+} g^{\epsilon}\p_{\mu}\tilde{\eta} dx .
\end{split}
\end{equation}
\emph{Step 2: Error bounds.}
Via Hölder's inequality and interpolation, we now address the error contributions which involve derivatives of $a^{ij}$ and which are not contained in $g$.
Using $n\geq 2$ and the smallness assumption (\ref{eq:small}), we, for instance, have:
\begin{align*}
\int\limits_{B_{1}^+}|\p_{\mu}a^{ij}||\p_j\ve||\p_{i\mu}\ve|\zeta^2 dx
&\leq  \Vert \partial_\mu a^{ij} \Vert_{L^{n+1}(A_{m}^+)} 
  \Vert \zeta \partial^2_{i\mu} \ve \Vert_{L^2(A_m^+)} \Vert \zeta \partial_j \ve \Vert_{L^{\frac{2(n+1)}{n-1}}(A_m^+)} 
\\ & \leq  C \delta  
  \Vert \zeta \partial^2_{i\mu} \ve \Vert_{L^2(A_m^+)} \Vert  \nabla( \zeta  \partial_j \ve )\Vert_{L^{2}(A_m^+)}.
 \end{align*}

Absorbing the second order contribution in the right hand side, allows us to deal with this and similar terms. 
We bound the terms involved in $ \int\limits_{B_1^+}|g^{\epsilon}||\p_{\mu}\tilde{\eta}|dx$:
By Hölder's inequality
\begin{align*}
\int\limits_{B_1^+}|g^{\epsilon}||\p_{\mu}\tilde{\eta}|dx 
&\leq  C(\delta)\int\limits_{B_1^+}|g^{\epsilon}|^2 \zeta^2 dx + \delta \int\limits_{B_1^+} \zeta^{-2}|\p_{\mu}\tilde{\eta}|^2 dx \\
& \leq C(\delta)\int\limits_{B_1^+}|g^{\epsilon}|^2 \zeta^2  dx + \delta \int\limits_{B_1^+}\zeta^2 |\p_{\mu}^2 \ve|^2 dx  + \delta \int\limits_{B_1^+} |(\nabla \zeta)\p_{\mu}\ve|^2 dx .
\end{align*}
Absorbing the second order derivative contribution into the left hand side of (\ref{eq:2der}), we arrive at
\begin{align*}
\Vert \zeta |\nabla^2 \ve|  \Vert_{L^2(B_1^+)}  &\leq  C ( \left\| (\nabla \zeta) \p_j \ve \right\|_{L^2(B_1^+)}^2 +  \left\| \zeta \p_j \ve \right\|_{L^2(B_1^+)}^2 +  \left\| \ve \right\|_{L^2(B_1^+)}^2 \\
& \quad + \Vert \zeta g^{\epsilon} \Vert_{L^2(B_1^+)}^2).
\end{align*}
We now pass to the limit $\epsilon \rightarrow 0$. Using the weak lower semi-continuity of the $L^2$ norm and the $L^2$ boundedness of $g$, we obtain
\begin{equation}
\label{eq:v_limit}
\begin{split}
\int\limits_{B_1^+}|\nabla^2 v|^2 \zeta^2 dx &\leq  C(p,n)( \left\| (\nabla \zeta) \p_j v \right\|_{L^2}^2  \\ 
& \quad  +  \left\| \zeta \p_j v \right\|_{L^2}^2 +  \left\| v \right\|_{L^2}^2) + \int\limits_{B_1^+}g^2 \zeta^2dx.
\end{split}
\end{equation}
Hence, it remains to bound $\left\| g \zeta \right\|_{L^2}$. Here we make use of the bound for $\phi$ and its derivatives in the dyadic annuli $A_m$ to which we can always localize by choosing $\zeta$ appropriately.
In a similar spirit as before, interpolation arguments and Hölder's inequality permit us to control $\int_{B_1^+} g^2\zeta^2 $ by
\begin{align*}
\int\limits_{B_1^+} g^2\zeta^2 dx \lesssim \tau^2 2^{2 m}\int\limits_{B_{1}^+}\zeta^2|\nabla v|^2 dx + \tau^4 2^{4 m}\int\limits_{B_{1}^+}\zeta^2 v^2dx + \int\limits_{B_1^+}\zeta^2 f^2 dx 
\end{align*}
As above the second order error contributions which arise on the right hand side can be absorbed in the left hand side of (\ref{eq:v_limit})\\

\emph{Step 4: Conclusion.}
Using the weak lower semi-continuity of the $L^2$ norm, we obtain the desired $H^2$ estimates for $v$.
\end{proof}

\section[Compactness, Homogeneity, Regularity]{Consequences of the Carleman Estimate: Compactness, Homogeneity and an Almost Optimal Regularity Result}
\label{sec:reg}
In this section we present the consequences of the Carleman estimate, Proposition \ref{prop:varmet}, and of Corollary \ref{cor:consequence_Carl}: These include the independence (of the chosen subsequence) of the rate of vanishing at free boundary points (c.f. Proposition~\ref{prop:indep}), the compactness of $L^2$-rescaled (sub)sequences of solutions of the thin obstacle problem (c.f. Proposition \ref{prop:blowup}), the homogeneity of the blow-up solutions along particular subsequences (c.f. Proposition \ref{lem:homo}) and an almost optimal regularity result (c.f. Proposition \ref{prop:almost}). The independence of the vanishing order on the subsequence along which the blow-up solution is considered, combined with the homogeneity of particular blow-up solutions allows to classify the lowest blow-up profile and to obtain growth estimates (c.f. Proposition \ref{prop:homo3} and Corollary \ref{cor:kappa}). \\

Apart from Proposition \ref{lem:homo} and Lemma \ref{lem:almosthom}, all the results of this section are entirely based on Corollary \ref{cor:consequence_Carl}. Only for the proof of Proposition \ref{lem:homo} do we have to invoke the full strength of the modified Carleman estimate from Lemma \ref{lem:varmet1}.\\
 
In the whole section, we assume that the assumptions (A0)-(A4) are satisfied; in particular,  all solutions of the thin obstacle problem (\ref{eq:varcoef}) are assumed to be ($L^2$) normalized and the uniformly elliptic, symmetric tensor field $a^{ij}$ is in $W^{1,p}$ with some $p\in(n+1,\infty]$. This ensures that the assumptions of both the Carleman estimate (\ref{eq:vCarl}) and of Corollary \ref{cor:consequence_Carl} are fulfilled throughout the section.

\subsection[Vanishing order]{The order of vanishing and growth estimates}

As a direct consequence of the Corollary \ref{cor:consequence_Carl}, we obtain information on the vanishing order of solutions of (\ref{eq:varcoef}). Here we use the following definition:

\begin{defi}[Vanishing order]
\label{defi:vanishing}
For any $u\in L^2(B_\delta^+(x_0))$, $\delta>0$, we define the vanishing order of $u$ at $x_0$ as
\begin{equation*}
\kappa_{x_0}:=\limsup_{r\rightarrow 0_+} \frac{\ln \left(\fint_{A^+_{r/2,r}(x_0)}u^2\right)^{1/2}}{\ln r} \in [-\infty,\infty].
\end{equation*} 
\end{defi}

With the aid of the vanishing order it is possible to quantify the growth of solutions of the thin obstacle problem. We first prove that the $\limsup$ in the definition of the vanishing order is in fact a limit. 

\begin{prop}\label{prop:indep}
Let $w$ be a solution of \eqref{eq:varcoef} in $B_1^+$ and $x_0\in B_1'$. Then
\begin{equation*}
\liminf_{r\rightarrow 0_+}\frac{\ln \left(\fint_{A^+_{r/2,r}(x_0)}w^2\right)^{1/2}}{\ln r}=\limsup_{r\rightarrow 0_+}\frac{\ln \left(\fint_{A^+_{r/2,r}(x_0)}w^2\right)^{1/2}}{\ln r}.
\end{equation*}
\end{prop}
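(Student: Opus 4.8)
The plan is to deduce the equality of the $\liminf$ and $\limsup$ directly from the three spheres inequality, Corollary~\ref{cor:consequence_Carl}, which is the variable coefficient analogue of the logarithmic convexity of the $L^2$ norm. Concretely, I would introduce the quantity
\begin{align*}
h(r):=\left\| w \right\|_{L^2(A_{r,2r}^+(x_0))},
\end{align*}
and rewrite the inequality \eqref{eq:consequence_Carl1} (dropping the first, manifestly nonnegative term on the left and keeping only the Poincar\'e/antisymmetric term, with say $\tau=1$ and appropriate choices of the three radii $r_1<r_2<r_3$) as a quantitative convexity statement for $\ln h(r)$ in the variable $\ln r$, up to the error introduced by the weight $\tilde\phi$. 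Since $\tilde\phi(t)=-t+c_0(t\arctan t-\tfrac12\ln(1+t^2))$ satisfies $\tilde\phi'(t)\to -1$ as $t\to-\infty$ with $\tilde\phi''(t)=c_0(1+t^2)^{-1}$ integrable near $-\infty$, the $e^{\tau\tilde\phi(\ln r_i)}$ factors only perturb the clean convexity of $\ln h$ by a bounded (in fact $o(1)$ after suitable normalization) amount, so that $\ln h(r)$ is, up to a bounded additive error, a convex function of $\ln r$.

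From such an approximate convexity one extracts that the difference quotient
\begin{align*}
\frac{\ln h(r)}{\ln r}
\end{align*}
converges as $r\to 0_+$: a convex function of a variable $s=\ln r\to -\infty$ (modulo a bounded error) has a well-defined asymptotic slope, and this asymptotic slope is exactly the common value of the $\liminf$ and the $\limsup$. Here I would be slightly careful: the Carleman estimate only gives the three spheres inequality under the constraints relating $r_1,r_2,r_3$ (e.g.\ $r_1\le \tfrac23 r_3$), so I would iterate it on a dyadic (or $e$-adic) sequence of radii to get convexity of the sequence $m\mapsto \ln h(e^{-m})$ along with an interpolation between consecutive scales, and then pass from the discrete statement to the continuous one using that $h(r)$ and $h(2r)$ are comparable up to the controlled weight. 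Finally, I need to relate $h(r)=\|w\|_{L^2(A_{r,2r}^+(x_0))}$ to the average $\left(\fint_{A^+_{r/2,r}(x_0)}w^2\right)^{1/2}$ that appears in Definition~\ref{defi:vanishing}; these differ only by a factor comparable to a fixed power of $r$ (the volume of the annulus) and by passing from the annulus $A_{r/2,r}^+$ to $A_{r,2r}^+$, neither of which affects the limiting value of the quotient with $\ln r$, since $\ln(\text{power of }r)/\ln r$ is bounded and $\ln h(r)-\ln h(r/2)$ is controlled.

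The main obstacle I expect is bookkeeping the logarithmic weight $\tilde\phi$ and the polynomial-in-$\tau$, logarithmic-in-$r$ prefactors in \eqref{eq:consequence_Carl1} carefully enough to conclude that they do not destroy the convexity-type conclusion: one must check that $\tau\tilde\phi(\ln r_2)-\tfrac12(\tau\tilde\phi(\ln r_1)+\tau\tilde\phi(\ln r_3))$ has a sign (or is only a bounded error) for the relevant triples of radii, which is where the specific, strictly pseudoconvex choice of $\tilde\phi$ enters, and that the factors like $(1+|\ln r_2|)^{-1}$ and $\max\{\ln(r_2/r_1)^{-1},\ln(r_3/r_2)^{-1}\}$ only produce additive errors of the form $C\ln|\ln r|$ in $\ln h$, which are $o(\ln r)$ and hence irrelevant for the quotient. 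A secondary, more routine point is that \eqref{eq:consequence_Carl1} requires $w\eta$ to be an admissible test object (compactly supported in an annulus, solving the Signorini problem with a controlled right-hand side), which is precisely how Corollary~\ref{cor:consequence_Carl} was set up, so no further work is needed there.
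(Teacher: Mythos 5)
Your overall strategy---read the Carleman estimate, via Corollary~\ref{cor:consequence_Carl}, as an approximate log-convexity statement and then argue that an (approximately) convex function of $s=\ln r$ has a well-defined asymptotic slope as $s\to-\infty$---is reasonable in spirit, and the bookkeeping you describe for the weight $\tilde\phi$ and the $|\ln r|$-prefactors is essentially right. But there is a genuine gap at the central step: fixing $\tau=1$ does not yield a convexity statement, and it cannot. After you renormalize by the weight $e^{\tau\tilde\phi(\ln r)}r^{-1}$, a fixed-$\tau$ version of \eqref{eq:consequence_Carl1} gives (in the variable $t=\ln r$ and with $H(t)$ the renormalized $L^2$ annulus norm) an inequality of the form $H(t_2)\leq C\bigl(H(t_1)+H(t_3)\bigr)$. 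That is a ``no internal spikes'' bound, strictly weaker than convexity. Tracing the exponents through, the lower bound on the vanishing order that a fixed $\tau$ produces is $\tau(1+c_0\pi/2)+1-\tfrac{n+1}{2}$---a number independent of $w$. With $\tau=1$ this is roughly $\tfrac12$ (for $n=2$), not $\kappa_0$, so the argument cannot close.

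The information that $\liminf\geq\kappa_0-\epsilon$ can only be extracted if $\tau$ is \emph{matched} to the unknown vanishing order $\kappa_0$. That is exactly where the $\limsup$ definition of $\kappa_0$ enters: the paper chooses $\tau=(\kappa_0+\tfrac{n-1}{2}-\epsilon)/(1+c_0\pi/2)$, so that $e^{\tau\tilde\phi(\ln r_1)}r_1^{-1}\sim r_1^{-(\kappa_0+\frac{n+1}{2}-\epsilon)}$ up to $|\ln r_1|$ factors; then the $\limsup$ supplies a sequence $r_1\to 0$ along which $\|w\|_{L^2(A^+_{r_1,2r_1})}\leq r_1^{\kappa_0+\frac{n+1}{2}-\epsilon}$, so the $r_1$-term in \eqref{eq:consequence_Carl1} stays bounded; the inequality then propagates this bound to every small $r_2$, giving the lower bound on the $\liminf$, and $\epsilon\downarrow0$ finishes. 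An alternative that does fit your ``convexity'' framing is to optimize over $\tau$ for each triple $r_1<r_2<r_3$ (as in standard derivations of three-balls inequalities from Carleman estimates); that yields genuine log-convexity in $\tilde\phi(\ln r)$, at the cost of handling the constraint that $\tau<\tau_0$ only for $r\leq R_0(\tau_0)$. Either way, a variable $\tau$ is unavoidable, and your fixed-$\tau$ plan needs to be replaced. One further minor omission: the case $\kappa_0=\infty$ needs a separate (easier) argument, which the paper covers by sending $\tau=M\to\infty$.
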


\begin{proof}
Without loss of generality, we only show the result for $x_0=0$ and derive it as an immediate consequence of \eqref{eq:consequence_Carl1} in Corollary~\ref{cor:consequence_Carl}. Indeed, if $\kappa_0<\infty$, for any $\epsilon>0$ fixed, we set $\tau=(\kappa_0+(n-1)/2-\epsilon)/(1+c_0\pi/2)$ in \eqref{eq:consequence_Carl1}. Note that for the chosen $\tau$, 
\begin{equation}\label{eq:asymp}
\frac{\tau \tilde{\phi}(t)}{t}\rightarrow -(\kappa_0+\frac{n-1}{2}-\epsilon)\text{ as }t\rightarrow -\infty.
\end{equation} 
Considering the $\limsup_{r_1\rightarrow 0_+}$ on both sides of \eqref{eq:consequence_Carl1} and recalling our definition of the vanishing order, the right hand side of (\ref{eq:consequence_Carl1}) is bounded. Thus,
\begin{align*}
e^{\tau \tilde{\phi}(\ln r_2)}r_2^{-1}(1+|\ln r_2|)^{-1}\|w\|_{L^2(A^+_{r_2,2r_2})}\leq C \quad \text{for any }  0<r_2\ll r_3.
\end{align*}
Hence, taking the logarithm and dividing by $\ln r_2$, we have
\begin{align*}
\frac{\ln \|w\|_{L^2(A_{r_2,2r_2}^+)}}{\ln r_2}\geq -\frac{\tau \tilde{\phi}(\ln r_2)}{\ln r_2}+1 +\frac{\ln (1+|\ln r_2|)}{\ln r_2} +\frac{\ln C}{\ln r_2}.
\end{align*}
By \eqref{eq:asymp},
\begin{align*}
\liminf_{r_2\rightarrow 0_+}\frac{\ln \|w\|_{L^2(A_{r_2,2r_2}^+)}}{\ln r_2}\geq \kappa_0+\frac{n+1}{2}-\epsilon.
\end{align*}
Since $\epsilon$ is arbitrary, the above inequality implies the desired estimate.

For $\kappa_0=\infty$, we set $\tau=M$, for an arbitrarily large $M$, in \eqref{eq:consequence_Carl1} and argue similarly. 
\end{proof}

\begin{rmk}[Growth bounds]
\label{rmk:growth}
\begin{itemize}
\item The preceding proposition immediately leads to the following growth estimate:
For each $x_0\in B'_1$ and each $\epsilon>0$ there exists a radius $r_{\epsilon}= r_{\epsilon}(w)>0$ such that
\begin{align*}
r^{\kappa_{x_0}+\frac{n+1}{2} + \epsilon} \leq  \left\| w\right\|_{L^2(A^+_{r/2,r}(x_0))} \leq r^{\kappa_{x_0} +\frac{n+1}{2}- \epsilon} \mbox{ for all } 0<r \leq r_\epsilon.
\end{align*}
By virtue of the $L^2-L^{\infty}$ estimates, this also transfers to $L^{\infty}$ norms over balls.
\item We emphasize that these growth bounds are asymptotic bounds as $r \rightarrow 0$. The radii $r_{\epsilon}>0$ may vary from free boundary point to free boundary point. Hence, these are \emph{non-uniform} estimates in the free boundary points. In Lemma \ref{lem:growthimp}, we will indicate how to obtain related \emph{uniform} upper bounds.
\item While it is possible to obtain uniform (in the radius $r$) \emph{upper} bounds on the growth of solutions (c.f. Lemma \ref{lem:growthimp}), the \emph{lower} bounds can only be improved slightly (c.f. Corollary \ref{cor:growth1}). 
\end{itemize}
\end{rmk}

\begin{rmk}[Vanishing order]
\label{rmk:vanishing_order}
For points $x_0$ with $\kappa_{x_0}>-\frac{n+1}{2}$ (which guarantees integrability at zero), it would have been possible to consider a related notion of the vanishing order in which one considers the $L^2$ norms on (half) balls instead of (half) annuli:
\begin{align*}
\tilde{\kappa}_{x_0}:=\limsup_{r\rightarrow 0_+} \frac{\ln \left(\fint_{B_{r}^+(x_0)}u^2\right)^{1/2}}{\ln r}.
\end{align*}
However, at first sight this definition is less suited for our Carleman based arguments. Yet we remark that the two notions of vanishing order coincide. Indeed, by Remark \ref{rmk:growth} given any $\epsilon>0$, for $0<r<r_\epsilon = r_{\epsilon}(w)$ we have
\begin{align*}
r^{2\kappa_{x_0}+\epsilon+n+1}\sum_{j=0}^{\infty} 2^{-j(2\kappa_{x_0}+\epsilon+n+1)}
&\leq \|u\|_{L^2(B_r^+(x_0))}^2 \left( =\sum_{j=0}^\infty \|u\|_{L^2(A_{2^{-(j+1)}r,2^{-j}r}^+)}^2\right) \\
&\leq r^{2\kappa_{x_0}-\epsilon+n+1}\sum_{j=0}^{\infty} 2^{-j(2\kappa_{x_0}-\epsilon+n+1)}.
\end{align*}
If $-\frac{n+1}{2}<\kappa_{x_0}<\infty$, then the geometric series converge, thus $\tilde{\kappa}_{x_0}=\kappa_{x_0}$; if $\kappa_{x_0}=\infty$, then $\tilde{\kappa}_{x_0}=\infty$. On the other side,
if $-\frac{n+1}{2}<\tilde{\kappa}_{x_0}<\infty$, then it is not hard to see that there is a sequence $r_j\rightarrow 0$ such that
\begin{align*}
 \frac{\ln \left(\fint_{A^+_{r_j/2,r_j}}w^2\right)^{1/2}}{\ln (r_j)}  \leq C <\infty.
\end{align*}
Then by Proposition~\ref{prop:indep}, $\kappa_{x_0}$ is finite. It then necessarily equals to $\tilde{\kappa}_{x_0}$. If $\tilde{\kappa}_{x_0}=\infty$, then $\kappa_{x_0}=\infty$. This follows as otherwise $\kappa_{x_0}<\infty$ and hence $\tilde{\kappa}_{x_0}<\infty$, which is a contradiction. \\
\end{rmk}

Next we show the upper semi-continuity of the map
$
B'_1 \ni x\mapsto \kappa_x,
$
as a consequence of equation (\ref{eq:consequence_Carl1}) in Corollary \ref{cor:consequence_Carl}.

\begin{prop}[Upper semi-continuity]\label{prop:semi_cont}
Assume that $w$ is a solution of (\ref{eq:varcoef}).
Let $\kappa_{x}$ be the vanishing order of $w$ at $x\in B'_1$. Then the mapping 
$B'_1 \ni x\mapsto \kappa_{x}$
is upper semi-continuous.
\end{prop}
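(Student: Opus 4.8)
The plan is to derive upper semi-continuity directly from the three spheres inequality \eqref{eq:consequence_Carl1}, exploiting that the Carleman weight $\tilde\phi$ gives an essentially ``linear'' exponential rate as $t\to-\infty$, so that the vanishing order can be read off from a quantitative closeness statement rather than a limiting one. Fix $x_0 \in B'_1$ and a sequence $x_k \to x_0$ in $B'_1$, and suppose for contradiction that $\limsup_{k\to\infty} \kappa_{x_k} > \kappa_{x_0}$ (we may also need to treat the case $\kappa_{x_0}=\infty$ separately, but the inequality $\kappa_{x_0} \ge \limsup \kappa_{x_k}$ is then automatic, so assume $\kappa_{x_0}<\infty$). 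Passing to a subsequence, assume $\kappa_{x_k} \ge \kappa_{x_0} + 3\epsilon$ for some fixed $\epsilon>0$ and all $k$.

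First I would choose $\tau = (\kappa_{x_0} + (n-1)/2 + \epsilon)/(1 + c_0 \pi/2)$, exactly as in the proof of Proposition~\ref{prop:indep}, so that $\tau\tilde\phi(t)/t \to -(\kappa_{x_0} + (n-1)/2 + \epsilon)$ as $t\to-\infty$; fix $\tau_0 = \tau+1$ and let $R_0 = R_0(\tau_0,n,p,\|\nabla a^{ij}\|_{L^p})$ be the radius from Remark~\ref{rmk:Carl_application}. Since the constant $C=C(n)$ in Corollary~\ref{cor:consequence_Carl} is uniform in $x_0\in B^+_{1/2}$ (this is precisely the content of the remark after the corollary), I would apply \eqref{eq:consequence_Carl1} at each center $x_k$ with a \emph{fixed} triple of radii $0 < r_1 < r_2 < r_3 \le R_0$. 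The point is that the growth condition $\kappa_{x_k}\ge\kappa_{x_0}+3\epsilon$ together with Remark~\ref{rmk:growth} gives, for all sufficiently small $r_1$ (small depending on $x_k$ a priori, but this is acceptable since we are free to send $k\to\infty$ after fixing $r_1$; alternatively one uses a uniform smallness from Lemma~\ref{lem:growthimp} once available), an upper bound $e^{\tau\tilde\phi(\ln r_1)} r_1^{-1}\|w\|_{L^2(A^+_{r_1,2r_1}(x_k))} \to 0$ as $r_1 \to 0$, because $\tau$ was tuned to the \emph{smaller} exponent $\kappa_{x_0}+\epsilon$ while the actual decay of $w$ near $x_k$ is governed by the \emph{larger} exponent $\kappa_{x_k}\ge\kappa_{x_0}+3\epsilon$. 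Hence the first term on the right of \eqref{eq:consequence_Carl1} is negligible, leaving
\[
\tau^{3/2}(1+|\ln r_2|)^{-1} e^{\tau\tilde\phi(\ln r_2)} r_2^{-1}\|w\|_{L^2(A^+_{r_2,2r_2}(x_k))} \le c_0^{-1}C(n)\, e^{\tau\tilde\phi(\ln r_3)} r_3^{-1}\|w\|_{L^2(A^+_{r_3,2r_3}(x_k))} + o(1).
\]
Now send $k\to\infty$: by continuity of $w$ (assumption (W), or simply $w\in C^{1,\alpha}$) the $L^2$ norms over the fixed annuli $A^+_{r_i,2r_i}(x_k)$ converge to those over $A^+_{r_i,2r_i}(x_0)$, so the same inequality holds at $x_0$ with the $o(1)$ replaced by an honest $0$; then letting $r_1\to 0$ first (which we used to kill the first term) and afterwards $r_2\to 0$ and comparing exponents via \eqref{eq:asymp}-type asymptotics yields $\kappa_{x_0}\ge\kappa_{x_0}+\epsilon-\delta'$ for every $\delta'>0$, a contradiction.

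The main obstacle I anticipate is the uniformity of the ``smallness radius'': Remark~\ref{rmk:growth} only furnishes a radius $r_\epsilon(w)$ that may genuinely depend on the base point, so the estimate $e^{\tau\tilde\phi(\ln r_1)}r_1^{-1}\|w\|_{L^2(A^+_{r_1,2r_1}(x_k))}\to 0$ is a priori only pointwise in $k$, and one must be careful about the order of limits ($k\to\infty$ versus $r_1\to 0$). The clean way around this is to fix $r_1$ small first, relative to $R_0$ and to the convergence $x_k\to x_0$, then use that $\kappa_{x_k}\ge\kappa_{x_0}+3\epsilon$ forces $\|w\|_{L^2(A^+_{r_1,2r_1}(x_k))}$ to be bounded by $r_1^{\kappa_{x_0}+5\epsilon/2+(n+1)/2}$ for $r_1$ below a $k$-dependent threshold — but since we ultimately take $k\to\infty$ with $r_1$ fixed, what we really need is the uniform continuity of $x\mapsto \|w\|_{L^2(A^+_{r_1,2r_1}(x)})$, which holds by $w\in C^{1,\alpha}$, combined with a \emph{single} application of Proposition~\ref{prop:indep} at $x_0$ to fix the exponent there. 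In other words, the cleanest route is contrapositive and quantitative: if $\kappa_{x_k}$ stayed large, the frozen-radius three spheres inequality would, in the limit, force a decay of $w$ at $x_0$ strictly faster than $r^{\kappa_{x_0}+(n+1)/2}$, contradicting Proposition~\ref{prop:indep}. I would write the argument in that form to sidestep any iterated-limit subtlety.
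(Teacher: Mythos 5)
Your initial idea is sound, but your final ``fix'' for the iterated-limit problem does not actually close the gap, and the route you ultimately settle on is different from the paper's.

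The specific issue: with $r_1$ frozen and $k\to\infty$, the limit of the three-spheres inequality applied at $x_k$ is simply the three-spheres inequality applied at $x_0$ --- the hypothesis $\kappa_{x_k}\ge\kappa_{x_0}+3\epsilon$ plays no role whatsoever in this limit, because with $r_1$ fixed you cannot invoke the growth estimate at $x_k$ (whose validity threshold $r_\epsilon(x_k)$ may shrink to zero as $k\to\infty$). Worse, since you tuned $\tau$ so that $e^{\tau\tilde\phi(\ln r_1)}r_1^{-1}\sim r_1^{-(\kappa_{x_0}+(n+1)/2+\epsilon)}$ while Proposition~\ref{prop:indep} only gives $\|w\|_{L^2(A^+_{r_1,2r_1}(x_0))}\gtrsim r_1^{\kappa_{x_0}+(n+1)/2+\epsilon'}$, the inner term at $x_0$ actually \emph{blows up} as $r_1\to 0$: it dominates the inequality and yields no contradiction. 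So ``the frozen-radius three spheres inequality would, in the limit, force faster decay at $x_0$'' is not correct. Your earlier diagonal idea ($r_1=r_1(k)\to 0$ chosen inside the $k$-dependent smallness radius so that the inner term at $x_k$ is $\lesssim r_1(k)^{\epsilon}\to 0$, then passing to the limit with $r_2,r_3$ fixed) would in fact work, but you explicitly retreat from it; the Lemma~\ref{lem:growthimp} route you mention parenthetically would also work but only if one restricts to $x_k\in\Gamma_w$ (the lemma is only stated there), and moreover that lemma appears after this proposition in the paper.

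The paper's proof sidesteps the uniformity issue entirely and is not a sequential contradiction argument. It fixes $\tau$ tuned to $\kappa_0+\epsilon$, fixes a \emph{single} small radius $r_2$ (so small that the lower growth bound (\ref{eq:grow1}) at the origin gives a definite positive lower bound for $\|w\|_{L^2(A^+_{r_2/2,r_2}(0))}$), and then applies (\ref{eq:consequence_Carl1}) centered at an arbitrary nearby $x\in B^+_{r_2/4}$. The key trick is the set containment $A^+_{r_2/2,r_2}(0)\subset A^+_{r_2/4,2r_2}(x)$, which transfers the lower bound \emph{at the origin} to the middle annulus centered at $x$, without ever needing a quantitative estimate at $x$. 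The outer ($r_3$) term is then absorbed, leaving a lower bound for $r_1^{-((n+1)/2+\kappa_0+\epsilon)}\|w\|_{L^2(A^+_{r_1,2r_1}(x))}$ uniformly in $r_1$, which upon taking $r_1\to 0$ gives $\kappa_x\le\kappa_0+\epsilon$ directly for every $x$ in a fixed neighborhood. This yields the semicontinuity in the form ``$\forall\epsilon\ \exists\delta$'' with no need for a sequence or an iterated limit.
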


\begin{proof}
We only show the upper semi-continuity at the origin. For any other point the argument is analogous. We show that for any $\epsilon>0$ there exists $\delta>0$ such that $\kappa_x \leq \kappa_0 + \epsilon$ for all $|x|\leq \delta$. The statement is trivial if $\kappa_0=\infty$, thus we assume $\kappa_0<\infty$ ($\kappa_0>-\infty$ follows immediately as $w\in L^{2}(B_1^+)$).\\
Using the notation of Corollary \ref{cor:consequence_Carl}, we choose $\tau= (\kappa_0 + \frac{n-1}{2} + \epsilon)/(1+c_0\pi/2)$. Moreover, we consider radii $r_1,r_2,r_3$ such that
\begin{itemize}
\item[(a)] $r_3 = R_0/2 $, where $R_0$ is the radius from Corollary \ref{cor:consequence_Carl},
\item[(b)] $r_2=r_2(\epsilon,\kappa_0,n,x_0)>0$ is a fixed radius, which is chosen so small that 
$$e^{\tau \tilde{\phi}(\ln(2r_2))}(2r_2)^{-1}(1+(\ln (2r_2))^2)^{-1}\geq r_2^{-(\kappa_0+\frac{n+1}{2}+\frac{3\epsilon}{4})},$$ and that for sufficiently large $C=C(n,p)$ (which is possible by Remark~\ref{rmk:growth})
\begin{align}
\label{eq:grow1}
\left\| w \right\|_{L^2(A_{r_2/2,r_2}^+)} \geq C\frac{r_2^{\kappa_0+\frac{n+1}{2}+\frac{\epsilon}{2}}}{r_3^{\kappa_0+\frac{n+1}{2}+\epsilon}},
\end{align}
\item[(c)] $0< r_1 \ll r_2$. 
\end{itemize}
Let $x\in B^+_{r_2/4}$. Applying Corollary \ref{cor:consequence_Carl} with $x_0 = x$, the implies
\begin{align*}
r_2^{-((n+1)/2 + \kappa_0 + \frac{3\epsilon}{4})}\left\| w \right\|_{L^2(A^+_{r_2/4,2r_2}(x))} &\lesssim r_1^{-((n+1)/2 + \kappa_0 + \epsilon)}\left\| w\right\|_{L^2(A^+_{r_1,2r_1}(x))} \\
&\quad + r_3^{-((n+1)/2 + \kappa_0 + \epsilon)}\left\| w\right\|_{L^2(A_{r_3,2r_3}^+(x))}.
\end{align*}
Noticing that $A_{r_2/2,r_2}^+(0)\subset A_{r_2/4,2r_2}^+(x)$ for any $x\in B_{r_2/4}^+$, we switch from an annulus centered at $x$ to an annulus centered at $0$:
\begin{equation}
\label{eq:compare}
\begin{split}
r_2^{-((n+1)/2 + \kappa_0 + \frac{3\epsilon}{4})}\left\| w \right\|_{L^2(A^+_{r_2/2,r_2}(0))} & \lesssim r_1^{-((n+1)/2 + \kappa_0 + \epsilon)}\left\| w\right\|_{L^2(A^+_{r_1,2r_1}(x))}\\
& \quad + r_3^{-((n+1)/2 + \kappa_0 + \epsilon)}\left\| w\right\|_{L^2(A_{r_3,2r_3}^+(x))}.
\end{split}
\end{equation}
Due to the choice of $r_2$ in (\ref{eq:grow1}) and the $L^2$ normalization of $w$, the second term on the right hand side of (\ref{eq:compare}) can be absorbed in the left hand side of (\ref{eq:compare}).
As a consequence, we obtain
\begin{align*}
r_2^{-((n+1)/2 + \kappa_0 + \frac{3\epsilon}{4})}\left\| w \right\|_{L^2(A^+_{r_2/2,r_2}(0))} \lesssim r_1^{-((n+1)/2 + \kappa_0 + \epsilon)}\left\| w\right\|_{L^2(A^+_{r_1,2r_1}(x))},
\end{align*}
which -- using (\ref{eq:grow1}) -- we read as a lower bound for the right hand side (with $r_2$ fixed):
\begin{align}
\label{eq:vanord}
C \lesssim r_1^{-((n+1)/2 + \kappa_0 + \epsilon)}\left\| w\right\|_{L^2(A^+_{r_1,2r_1}(x))}.
\end{align}
Defining $\delta = r_2/4$, taking the logarithm of both sides of (\ref{eq:vanord}), dividing by $\ln(r_1)$ and passing to the limit $r_{1} \rightarrow 0$, yields the statement of the Proposition.
\end{proof}

The upper semi-continuity of the vanishing order immediately implies a lower bound for solutions around free boundary points: 

\begin{cor}[Growth estimates--lower bound]
\label{cor:growth1}
Let $w$ be a solution of (\ref{eq:varcoef}) in $B_{1}^+$. Let $K\subset B_{1}'$ be a compact set. Suppose that for some $\bar\kappa>0$, finite, $\kappa_x< \bar{\kappa}$ for all $x\in K$. Then there exist a neighborhood, $U$, of $K$ and a radius $r_0=r_0(K,\bar{\kappa},w)>0$ such that
\begin{align*}
\sup_{B_r^+(x)}|w|\geq r^{\bar\kappa} \mbox{ for all }0<r\leq r_0 \text{ and all }x\in U.
\end{align*}
\end{cor}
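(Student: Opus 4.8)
The plan is to deduce the uniform lower growth bound from the upper semi-continuity of the vanishing order (Proposition \ref{prop:semi_cont}) together with the pointwise asymptotic growth estimate of Remark \ref{rmk:growth}, using a compactness argument over the set $K$. The key observation is that upper semi-continuity turns the pointwise hypothesis $\kappa_x < \bar\kappa$ on $K$ into a \emph{local} statement, and compactness then turns it into a \emph{uniform} one, at which point the only remaining issue is the non-uniformity of the radius $r_\epsilon$ in the pointwise growth bound; this must itself be controlled via a further compactness/continuity argument.

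First I would use Proposition \ref{prop:semi_cont}: for each $x\in K$ we have $\kappa_x < \bar\kappa$, so $\kappa_x \le \bar\kappa - 2\epsilon_x$ for some $\epsilon_x>0$, and by upper semi-continuity there is $\delta_x>0$ with $\kappa_y \le \bar\kappa - \epsilon_x$ for all $y\in B_{\delta_x}'(x)$. Covering the compact set $K$ by finitely many such balls $B_{\delta_{x_i}/2}'(x_i)$, $i=1,\dots,N$, and letting $U$ be the union of these balls (or a slightly enlarged open neighborhood), we obtain a single $\epsilon_0 = \min_i \epsilon_{x_i}>0$ such that $\kappa_y \le \bar\kappa - \epsilon_0$ for all $y\in U$. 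Next, fix any $y\in U$. By Remark \ref{rmk:growth} (the upper/lower growth bound applied with exponent slack $\epsilon_0/2$), there is a radius $r_{\epsilon_0/2}(y)>0$ such that
\begin{align*}
\|w\|_{L^2(A^+_{r/2,r}(y))} \ge r^{\kappa_y + \frac{n+1}{2} + \frac{\epsilon_0}{2}} \ge r^{\bar\kappa + \frac{n+1}{2} - \frac{\epsilon_0}{2}} \quad \text{for all } 0<r\le r_{\epsilon_0/2}(y),
\end{align*}
and then the $L^2$--$L^\infty$ estimate (interior estimates together with the $C^{1,\alpha}$ bound, or the $H^2$ estimates of Lemma \ref{lem:H2}) upgrades this to $\sup_{B_r^+(y)} |w| \gtrsim r^{\bar\kappa + \frac{n+1}{2} - \frac{\epsilon_0}{2}} \cdot r^{-\frac{n+1}{2}} = c\, r^{\bar\kappa - \frac{\epsilon_0}{2}}$, which dominates $r^{\bar\kappa}$ for $r$ small. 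Absorbing constants and the $r^{-\epsilon_0/2}$ gain gives $\sup_{B_r^+(y)}|w| \ge r^{\bar\kappa}$ once $r$ is below some threshold depending on $y$.

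The main obstacle is precisely that this threshold $r_{\epsilon_0/2}(y)$ a priori depends on $y$, so a naive argument only gives a non-uniform radius. To get a uniform $r_0 = r_0(K,\bar\kappa,w)$ I would argue by contradiction: if no such $r_0$ exists, there are sequences $y_k \in U$ and $r_k\to 0$ with $\sup_{B_{r_k}^+(y_k)}|w| < r_k^{\bar\kappa}$. Passing to a subsequence, $y_k \to y_\infty \in \overline{U}$; we may assume (shrinking $U$ if necessary so that $\overline{U}$ still satisfies $\kappa_y\le \bar\kappa-\epsilon_0$ on it, which is possible by re-running the covering argument with slightly larger balls) that $\kappa_{y_\infty}\le \bar\kappa - \epsilon_0$. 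The failure of the growth bound along $(y_k,r_k)$ then contradicts a \emph{uniform} version of the lower growth estimate near $y_\infty$, which itself follows from Corollary \ref{cor:consequence_Carl}: applying the three-spheres inequality centered at $y_k$ with $r_1$ small, $r_2$ fixed and comparable to the scale where the $L^2$ norm of $w$ is nondegenerate, and $r_3 = R_0/2$, exactly as in the proof of Proposition \ref{prop:semi_cont} but now reading \eqref{eq:consequence_Carl1} as a lower bound on the $r_1$-term, yields $\|w\|_{L^2(A^+_{r_1,2r_1}(y_k))} \gtrsim r_1^{\bar\kappa + \frac{n+1}{2} - \frac{\epsilon_0}{2}}$ with constants independent of $k$ (here one uses that the $r_2$-term is bounded below uniformly near $y_\infty$, e.g. by continuity of $y\mapsto \|w\|_{L^2(A^+_{r_2/2,r_2}(y))}$ and nondegeneracy at $y_\infty$, which is where the hypothesis $\kappa_{y_\infty}<\infty$ enters). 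Converting to the $L^\infty$ statement as above and choosing $r_1 = r_k$ contradicts $\sup_{B_{r_k}^+(y_k)}|w| < r_k^{\bar\kappa}$ for $k$ large. This uniform three-spheres estimate near $y_\infty$, with $x_0$-independent constants guaranteed by the remark following Corollary \ref{cor:consequence_Carl}, is the technical heart of the argument.
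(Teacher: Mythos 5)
Your proposal is correct and follows essentially the same approach as the paper: upper semi-continuity plus compactness of $\overline{U}$, with the key technical ingredient being a local (in the center point) uniform lower growth estimate obtained by re-running the three-spheres argument (Corollary \ref{cor:consequence_Carl}) as in the proof of Proposition \ref{prop:semi_cont}. The contradiction wrapper and the invocation of $H^2$ or $C^{1,\alpha}$ estimates for the $L^2$-to-$L^\infty$ passage are unnecessary detours --- the paper argues directly and needs only the elementary inequality $\sup_{B_r^+(x)}|w|\geq\bigl(\fint_{B_r^+(x)}w^2\bigr)^{1/2}$.
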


\begin{proof}
By the upper semi-continuity of the vanishing order (Proposition~\ref{prop:semi_cont}), there exists a neighborhood $U\subset B'_1$ of $K$ with $\overline{U}$ compact, such that $\kappa_x<\bar\kappa$ for any $x\in \overline{U}$. Then by Remark~\ref{rmk:vanishing_order} and by arguing similarly as in Proposition~\ref{prop:semi_cont}, we have:  For any $\bar x\in \overline{U}$, there exists $\delta=\delta(\bar x, \bar\kappa, w)>0$ such that
\begin{align*}
\left(\fint_{B_r^+(x)} w^2\right)^{1/2}\geq r^{\bar \kappa} \text{ for any  }x\in B'_\delta(\bar x)\text{ and } 0<r<\delta.
\end{align*}
By compactness of $\overline{U}$, there exists $r_0 =r_0(K,\bar\kappa,w)>0$ such that 
\begin{align*}
\left(\fint_{B_r^+(x)} w^2\right)^{1/2}\geq r^{\bar \kappa} \text{ for any } x\in \overline{U}\text{ and } 0<r<r_0.
\end{align*}
Since $\sup_{B_r^+(x)}|w|\geq (\fint_{B_r^+(x)}w^2)^{1/2}$, the conclusion follows.
\end{proof}

\begin{rmk}
We emphasize that the lower bound in Corollary \ref{cor:growth1} is a \emph{non-uniform} estimate in the solution $w$. In the next lemma we show a uniform \emph{upper} bound.
\end{rmk}

\begin{lem}[Uniform upper growth bounds]
\label{lem:growthimp}
Let $w$ be a solution of (\ref{eq:varcoef}) in $B_{1}^+$. Given a finite constant $\bar{\kappa}>0$, there exists a constant $C=C(\bar\kappa,n,p,\|\nabla a^{ij}\|_{L^p(B_1^+)})$ such that for all $x_0 \in \Gamma_w \cap B_{1/2}$ 
\begin{align*}
\sup\limits_{B_{r}^+(x_0)}|w| \leq C r^{\min\{\kappa_{x_0},\bar{\kappa}\}} |\ln(r)|^{2} \mbox{ for all } 0<r\leq R_0,
\end{align*}
where $R_0=R_0(\bar\kappa,n,p,\|\nabla a^{ij}\|_{L^p(B_1^+)})$ is the radius in Corollary~\ref{cor:consequence_Carl} with $\tau_0=\bar\kappa+n$.
\end{lem}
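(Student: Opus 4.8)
\emph{The plan.} I would deduce the lemma from the three spheres inequality of Corollary~\ref{cor:consequence_Carl} fed with the (non-uniform) growth information carried by the vanishing order. Throughout set $\kappa:=\min\{\kappa_{x_0},\bar\kappa\}>0$.

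First I would reduce the $L^\infty$ bound to an $L^2$ bound. By the standard interior--boundary $L^2$--$L^\infty$ estimate for the thin obstacle problem (applied, as implicitly in Remark~\ref{rmk:growth}, to $w^+$ and $w^-$ separately, the Signorini conditions only helping) one has $\sup_{B_r^+(x_0)}|w|\lesssim r^{-(n+1)/2}\|w\|_{L^2(B_{2r}^+(x_0))}$ for $x_0\in B_{1/2}'$, $0<r\le R_0\le\tfrac14$, with constant depending only on $n,p,\|\nabla a^{ij}\|_{L^p(B_1^+)}$. Decomposing $B_r^+(x_0)$ into dyadic half-annuli and noting that, when summed, the geometric factor $\rho^{2\kappa+n+1}$ beats polynomial-in-$j$ logarithmic factors, it suffices to prove the \emph{annular} estimate $\|w\|_{L^2(A_{\rho,2\rho}^+(x_0))}\le C\rho^{\kappa+(n+1)/2}(1+|\ln\rho|)^2$ for all $0<\rho\le R_0$.

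Next I would establish this by applying Corollary~\ref{cor:consequence_Carl} with center $x_0$, inner radius $r_1\to0$, middle radius $r_2=\rho$, outer radius $r_3=R_0$, and Carleman parameter $\tau=\tau_\rho\in[1,\tau_0)$, $\tau_0=\bar\kappa+n$, chosen slightly below the critical value for $\kappa$: $\tau_\rho(1+c_0\pi/2)=\kappa+\tfrac{n-1}{2}-\eta_0(\rho)$ with margin $\eta_0(\rho)=\tfrac12\ln|\ln\rho|/|\ln\rho|\to0$ (in the borderline case forcing $\tau_\rho<1$ one simply takes $\tau_\rho=1$, which is then admissible since the target exponent is small). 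Using Proposition~\ref{prop:indep} to know that $\kappa_{x_0}$ is a genuine limit, Remark~\ref{rmk:growth} gives $\|w\|_{L^2(A_{r_1,2r_1}^+(x_0))}\le r_1^{\kappa_{x_0}+(n+1)/2-\eta}$ for all small $r_1$ and any $\eta>0$; since $e^{\tau_\rho\tilde\phi(\ln r_1)}=r_1^{-\tau_\rho(1+c_0\pi/2)}|\ln r_1|^{-\tau_\rho c_0}(1+o(1))$ and $\kappa\le\kappa_{x_0}$, the choice of $\tau_\rho$ makes the inner term $e^{\tau_\rho\tilde\phi(\ln r_1)}r_1^{-1}\|w\|_{L^2(A_{r_1,2r_1}^+(x_0))}$ vanish as $r_1\to0$. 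Passing to the limit and bounding the outer term by $\|w\|_{L^2(A_{R_0,2R_0}^+(x_0))}\le\|w\|_{L^2(B_1^+)}=1$ via (A0) yields
\begin{equation*}
\|w\|_{L^2(A_{\rho,2\rho}^+(x_0))}\le c_0^{-1}C(n)\,\tau_\rho^{-3/2}\,(1+|\ln\rho|)\,\rho\,R_0^{-1}\,e^{\tau_\rho[\tilde\phi(\ln R_0)-\tilde\phi(\ln\rho)]}.
\end{equation*}
Then, using $\tilde\phi'(s)=-1+c_0\arctan s$ and $0<\arctan s+\tfrac{\pi}{2}<|s|^{-1}$ for $s$ large negative, I would estimate $\tilde\phi(\ln R_0)-\tilde\phi(\ln\rho)\le(1+c_0\pi/2)\ln(\rho/R_0)+c_0\ln|\ln\rho|$, whence $e^{\tau_\rho[\tilde\phi(\ln R_0)-\tilde\phi(\ln\rho)]}\le(\rho/R_0)^{\kappa+(n-1)/2}(\rho/R_0)^{-\eta_0(\rho)}|\ln\rho|^{\tau_\rho c_0}$ and $(\rho/R_0)^{-\eta_0(\rho)}\le e^{\eta_0(\rho)|\ln\rho|}=|\ln\rho|^{1/2}$. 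Substituting back gives $\|w\|_{L^2(A_{\rho,2\rho}^+(x_0))}\le C\rho^{\kappa+(n+1)/2}(1+|\ln\rho|)^{3/2+\tau_\rho c_0}$; fixing $c_0=c_0(\bar\kappa,n)\le(2(\bar\kappa+n))^{-1}$ makes the logarithmic power $\le2$, and $C$ depends only on $\bar\kappa,n,p,\|\nabla a^{ij}\|_{L^p}$ (through $R_0,\tau_0,c_0$ and the constant of Corollary~\ref{cor:consequence_Carl}, which by the remark following it is uniform in $x_0$). With the first step this concludes the proof.

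The hard part is obtaining the \emph{sharp} exponent $r^{\min\{\kappa_{x_0},\bar\kappa\}}$ with only a logarithmic, not a power, loss. This is a balancing act: the subcriticality margin $\eta_0(\rho)$ must tend to $0$ (otherwise one loses a fixed power $\rho^{\eta_0}$) yet slowly enough that the inner term in the three spheres inequality still vanishes in the limit $r_1\to0$; and $c_0$ must be small in terms of $\bar\kappa,n$ to absorb the error $\tilde\phi(\ln R_0)-\tilde\phi(\ln\rho)-(1+c_0\pi/2)\ln(\rho/R_0)$ coming from $\tilde\phi$ being only asymptotically linear. One must also keep all constants — in Corollary~\ref{cor:consequence_Carl} and in the $L^2$--$L^\infty$ estimate — uniform over $x_0\in\Gamma_w\cap B_{1/2}$, which the stated uniformity of Corollary~\ref{cor:consequence_Carl} in $x_0$ provides.
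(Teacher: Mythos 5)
Your proposal is structurally the same as the paper's: Corollary~\ref{cor:consequence_Carl} is applied at $x_0$ with the inner radius sent to $0$, the Carleman parameter $\tau$ is tuned to $\kappa=\min\{\kappa_{x_0},\bar\kappa\}$, the asymptotics of $\arctan$ in $\tilde\phi$ produce the logarithmic loss, $c_0$ is chosen small in terms of $\bar\kappa,n$ to cap the logarithmic power, and an $L^2$--$L^\infty$ estimate finishes. The one genuine difference is in how the critical $\tau$ is approached. The paper ties $\tau=\tau_j$ to the inner radius $r_j$, chosen so that the inner three-spheres term remains \emph{bounded} (by (\ref{eq:vanb})), then passes to the limit $j\to\infty$ where $\tau_j\to\tau_\infty=(\kappa_0+\frac{n-1}{2})/(1+c_0\pi/2)$, and reads off the exponent from that fixed $\tau_\infty$. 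You instead tie $\tau=\tau_\rho$ to the middle radius $\rho$, placed strictly subcritical by the margin $\eta_0(\rho)=\frac12\ln|\ln\rho|/|\ln\rho|$, so that the inner term \emph{vanishes} outright; the margin costs an extra $|\ln\rho|^{1/2}$, absorbed into the final $|\ln\rho|^{2}$. Both produce the same annular estimate; the paper's choice avoids the extra subcritical margin at the cost of the limiting-in-$j$ step, while yours is somewhat more transparent about where each factor of $|\ln\rho|$ originates.

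The only weak spot is your treatment of the borderline case: if $\tau_\rho<1$, setting $\tau=1$ is not automatically admissible, because the inner term then behaves like $r_1^{\kappa_{x_0}-(3-n)/2-c_0\pi/2-\eta}$ and so \emph{diverges} rather than vanishes as $r_1\to0$ whenever $\kappa_{x_0}\le(3-n)/2+c_0\pi/2$ — a nonempty range for $n=2,3$. The clean repairs are either to invoke Remark~\ref{rmk:tau}, which extends the Carleman estimate to $\tau\in(0,1)$ with an additional absorbable error, or to use that for $x_0\in\Gamma_w$ the $C^{1,\alpha}$ regularity plus the Signorini conditions force $\kappa_{x_0}>1$, so that for $c_0$ small enough the regime $\tau_\rho<1$ never arises. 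Since the paper's proof also does not discuss the possibility $\tau_\infty<1$, this is a shared technicality rather than a defect specific to your route.
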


\begin{proof} 
We assume that $0\in \Gamma_w$ and will show the statement at $0$. For the other free boundary points it follows analogously.\\
\emph{Case 1:} $\kappa_0 \leq \bar{\kappa}$. 
By Proposition~\ref{prop:indep} for each $\epsilon >0$
\begin{align*}
\lim\limits_{\tilde{r} \rightarrow 0 } \tilde{r}^{-\left(\kappa_0 + \frac{n+1}{2} - \epsilon\right) } \left\|w  \right\|_{L^2(A_{\tilde{r},2 \tilde{r}}^+)} &= 0.
\end{align*}
This implies that given a sequence $\{\epsilon_j\}$, $\epsilon_j\rightarrow 0$, there exist corresponding radii $r_j\rightarrow 0$ such that 
\begin{align}
\label{eq:vanb}
r_j^{- \left(\kappa_0 + \frac{n+1}{2} - \epsilon_j\right) } \left\| w \right\|_{L^2(A_{r_j/2,r_j}^+)} \leq 1 \mbox{ for all } j\in \N.
\end{align}
Let $0<r<R_0/4$. We apply Corollary \ref{cor:consequence_Carl} with the radii $r_2=r$, $r_1=r_j$, with $j\in \N$ such that $0<r_j \ll r $, and $r_3 = R_0/2$. As the weighting factor we choose
\begin{align}
\label{eq:weight1}
\tau_j = \left(\kappa_0 + \frac{n-1}{2} - \epsilon_j \right) \left( 1 - c_0 \arctan(\ln r_j) + \frac{1}{2}c_0 \frac{\ln(1+\ln(r_j)^2)}{\ln(r_j)}  \right)^{-1},
\end{align}
i.e. $\tau_j$ is such that $e^{\tau_j \tilde{\phi}(\ln r_j)}r_j^{-1}=r_j^{-(\kappa_0+\frac{n+1}{2}-\epsilon_j)}$,
where $\epsilon_j$ is associated with $r_j$ as discussed in (\ref{eq:vanb}). Here $c_0>0$ is the constant from the Carleman inequality from Proposition \ref{prop:varmet} which will be determined later.
With these assumptions, Corollary \ref{cor:consequence_Carl} then yields
\begin{align*}
e^{\tau_j \tilde{\phi}(\ln(r))} r^{-1}(1+ \ln^2(r))^{-\frac{1}{2}} \left\| w \right\|_{L^2(A_{r,2r}^+)}\\
 \leq c_0^{-1}C(n,p)\left(1+e^{\tau_j\tilde{\phi}(\ln (R_0/2))}R_0^{-1}\|w\|_{L^2(A_{R_0/2,R_0})}\right).
\end{align*}
We note that $\tau_j\rightarrow (\kappa_0+\frac{n-1}{2})/(1+c_0\pi/2)$ as $j\rightarrow \infty$.
Passing to the limit $j\rightarrow \infty$ and using the explicit expression of $\tilde{\phi}$, implies the upper bound
\begin{equation}
\label{eq:upper_bound}
\begin{split}
r^{- \left(\kappa_0 + \frac{n-1}{2} \right) \frac{1-c_0\arctan(\ln(r))}{1 + c_0 \pi/2} -1} (1+\ln^2(r))^{-\frac{1}{2}-\frac{1}{2}\frac{c_0(\kappa_0+(n-1)/2)}{1+c_0\pi/2}} \left\|w \right\|_{L^2(A_{r, 2r}^+)} \\
\leq c_0^{-1}C(n,p)\left(1+R_0^{-(\kappa_0+\frac{n+1}{2})}\|w\|_{L^2(A_{R_0/2,R_0}^+)}\right).
\end{split}
\end{equation}
The asymptotics of the function $t \mapsto \arctan(t)$ yield 
$$\left| \frac{1-c_0\arctan(\ln(r))}{1 + c_0 \frac{\pi}{2}} \right| = 1 + \frac{1}{1+c_0\pi/2}\frac{1}{\ln(r)}+ o\left(\frac{1}{\ln(r)}\right).$$ 
Considering $c_0 = (\bar \kappa+(n-1)/2)^{-1}$, we then improve (\ref{eq:upper_bound}) to 
\begin{align*}
r^{- \left(\kappa_0 + \frac{n+1}{2} \right)} (1+\ln(r)^2)^{-1} \left\|w \right\|_{L^2(A_{r, 2r}^+)} \\
\leq C(n,p,\bar\kappa)\left(1+R_0^{-(\kappa_0+\frac{n+1}{2})}\|w\|_{L^2(A_{R_0/2,R_0}^+)}\right).
\end{align*}
Recalling the parameter dependence of $R_0$ and our assumption (A0), we obtain a constant $C=C(\bar\kappa,n,p,\|\nabla a^{ij}\|_{L^p(B_1^+)})$ such that 
\begin{align*}
r^{- \left(\kappa_0 + \frac{n+1}{2} \right)} (1+\ln(r)^2)^{-1} \left\|w \right\|_{L^2(A_{r, 2r}^+)}\leq C \quad \text{for all } 0<r<R_0,
\end{align*}
or in other words
\begin{align}
\label{eq:unibound_annulus}
r^{- \frac{n+1}{2}}\left\|w \right\|_{L^{2}(A_{r,2r}^+)} \leq C r^{\kappa_0} |\ln(r)|^{2}\quad \text{for all } 0<r<R_0.
\end{align}
This then amounts to
\begin{align*}
\sup_{B_r^+}|w| \leq C r^{\kappa_0} |\ln(r)|^{2},
\end{align*}
by arguing similarly as in Remark~\ref{rmk:vanishing_order} and using the $L^2-L^\infty$ estimate of the solution, i.e. $\sup_{B_{r/2}^+}|w|\leq Cr^{-(n+1)/2}\| w\|_{L^2(B_r^+)}$ (c.f. \cite{U87}).\\
\emph{Case 2:}  $\kappa_0 >\bar{\kappa}$. We define $\tau_j:= \left(\bar{\kappa} + \frac{n-1}{2} - \epsilon_j \right) \left( 1 - c_0 \arctan(\ln(r_j)) \right)^{-1}$ in (\ref{eq:weight1}) and argue similarly as in case 1.
\end{proof}

\subsection{Doubling and the blow-up procedure}

The next proposition is a central result of this section. It states that at a free boundary point with finite vanishing order, a solution, $w$, of (\ref{eq:varcoef}) satisfies an $L^2$-doubling estimate. This implies sufficient compactness properties in order to carry out a blow-up procedure.

\begin{prop}
\label{prop:doubling}
Let $w: B_1^+ \rightarrow \R$ be a solution of (\ref{eq:varcoef}). 
Then for all $x_0\in \Gamma_w \cap B_{1/2}'$ with $\kappa_{x_0}<\infty$, there exist a radius $r_{x_0}>0$ and a constant $C_{x_0}>0$ depending on $x_0,\gamma, \kappa_{x_0}, n, \left\|\nabla a^{i j} \right\|_{L^p(B_1^+)},w $, such that for all $0<r\leq r_{x_0}$
\begin{align*}
\int\limits_{B_{2r}^+(x_0)}w^2 dx  \leq C_{x_0} \int\limits_{B_{r}^+(x_0)}w^2 dx.
\end{align*}
\end{prop}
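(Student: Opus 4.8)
The plan is to reduce the $L^2$ doubling estimate to an \emph{annular} doubling estimate and then to derive the latter from the three spheres inequality of Corollary~\ref{cor:consequence_Carl}. Without loss of generality I would take $x_0=0$. The reduction is elementary: since $B^+_{2r}(0)$ is the essentially disjoint union of $B^+_r(0)$ and $A^+_{r,2r}(0)$, one has $\int_{B^+_{2r}}w^2=\int_{B^+_r}w^2+\int_{A^+_{r,2r}}w^2$, while $A^+_{r/2,r}(0)\subset B^+_r(0)$ gives $\|w\|_{L^2(A^+_{r/2,r})}\le\|w\|_{L^2(B^+_r)}$. Hence it is enough to prove an annular doubling bound
\[
\|w\|_{L^2(A^+_{r,2r}(0))}\le C_1\,\|w\|_{L^2(A^+_{r/2,r}(0))}\qquad\text{for all }0<r\le r_0,
\]
since this immediately yields $\int_{B^+_{2r}}w^2\le(1+C_1^2)\int_{B^+_r}w^2$.

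To obtain the annular bound I would fix $\tau:=\kappa_0+\tfrac{n+1}{2}$ (which is $\ge 1$, as $w(0)=0$ at the free boundary point $0$ forces $\kappa_0\ge 0$ and $\tfrac{n+1}{2}\ge 1$), set $\tau_0:=\tau+1<\infty$ (here the hypothesis $\kappa_0<\infty$ enters), and let $R_0=R_0(\tau_0,n,p,\|\nabla a^{ij}\|_{L^p})$ be the radius from Corollary~\ref{cor:consequence_Carl}. I would then apply that corollary at $0$ with $r_1=r/2$, $r_2=r$, $r_3=R_0/2$, for $r$ small. The essential point is to retain the \emph{second} term on the left-hand side of \eqref{eq:consequence_Carl1}: for small $r$ we have $\ln(r_2/r_1)=\ln 2$ while $\ln(r_3/r_2)=\ln(R_0/(2r))\to\infty$, so the prefactor $\max\{\ln(r_2/r_1)^{-1},\ln(r_3/r_2)^{-1}\}$ equals the harmless constant $1/\ln 2$ — whereas the first left-hand side term carries the $r$-dependent factor $(1+|\ln r|)^{-1}$, which must be avoided. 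Discarding the remaining nonnegative left-hand side terms, dividing by $e^{\tau\tilde\phi(\ln r)}r^{-1}$, and using that $\tilde\phi'(s)=-1+c_0\arctan s$ satisfies $|\tilde\phi'|\le 2$ (hence $e^{\tau(\tilde\phi(\ln(r/2))-\tilde\phi(\ln r))}\le 4^{\tau}$) together with $-\tilde\phi'(s)\ge 1$ for $s\le 0$ and $\tilde\phi(0)=0$ (hence $\tilde\phi(\ln r)\ge-\ln r$ and $r\,e^{-\tau\tilde\phi(\ln r)}\le r^{\tau+1}$ for $r\le 1$), one is led to
\[
\tfrac{\tau}{\ln 2}\,\|w\|_{L^2(A^+_{r,2r})}\le c_0^{-1}C(n)\,(\,2\cdot 4^{\tau}\,\|w\|_{L^2(A^+_{r/2,r})}+E_0\,r^{\tau+1}\,),
\]
where $E_0:=e^{\tau\tilde\phi(\ln(R_0/2))}(R_0/2)^{-1}\|w\|_{L^2(A^+_{R_0/2,R_0})}$ is a fixed finite constant, bounded (using (A0), so that $\|w\|_{L^2(A^+_{R_0/2,R_0})}\le 1$) by $e^{\tau\tilde\phi(\ln(R_0/2))}(R_0/2)^{-1}$.

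It then remains to absorb the error term $c_0^{-1}C(n)E_0\,r^{\tau+1}$. Since $\tau+1=\kappa_0+\tfrac{n+1}{2}+1>\kappa_0+\tfrac{n+1}{2}+\tfrac12$, the \emph{lower} growth bound from Remark~\ref{rmk:growth} — available precisely because $\kappa_0<\infty$ — gives $\|w\|_{L^2(A^+_{r/2,r})}\ge r^{\kappa_0+(n+1)/2+1/2}$ for $0<r\le r_{1/2}(w)$, so for all sufficiently small $r$ the error satisfies $c_0^{-1}C(n)E_0\,r^{\tau+1}\le\tfrac{\tau}{2\ln 2}\,\|w\|_{L^2(A^+_{r/2,r})}$. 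Absorbing it yields the annular bound with $C_1=\tfrac{4\ln 2}{\tau}\,c_0^{-1}C(n)\cdot2\cdot 4^{\tau}$, a constant depending only on $\kappa_{x_0}$ and $n$, while the threshold $r_0$ depends on $E_0$, $\tau$, $r_{1/2}(w)$, $c_0$, $n$, hence ultimately on $x_0,\kappa_{x_0},n,p,\|\nabla a^{ij}\|_{L^p},w$; together with the first paragraph this proves the proposition. I expect the only genuine obstacle to be the logarithmic loss $(1+|\ln r|)^{-1}$ in \eqref{eq:consequence_Carl1}, which a naive application would pass on to the doubling constant; circumventing it forces the use of the antisymmetric-part term of the three spheres inequality, whose degeneration is governed solely by $\ln(r_2/r_1)^{-1}$ and $\ln(r_3/r_2)^{-1}$ and is therefore harmless for the geometric choice $r_1=r/2$, $r_2=r$, $r_3=R_0/2$. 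The secondary point, discarding the contribution coming from the fixed outer scale $R_0/2$, is exactly where one needs the (slightly super-polynomial) lower growth bound for $w$ near the free boundary.
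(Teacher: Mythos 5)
Your proof is correct and follows essentially the same route as the paper: both take $\tau=\kappa_0+\tfrac{n+1}{2}$, exploit the second (antisymmetric-part) term of Corollary~\ref{cor:consequence_Carl} with the fixed inner ratio $r_2/r_1=2$ so that the $\max\{\ln(r_2/r_1)^{-1},\ln(r_3/r_2)^{-1}\}$ prefactor degenerates to $1/\ln 2$ rather than to $(1+|\ln r|)^{-1}$, and both absorb the fixed outer-scale error via the non-uniform lower growth bound stemming from Corollary~\ref{cor:growth1}. The only difference is cosmetic: you first establish the annular doubling estimate and then sum up to pass to balls, whereas the paper passes from annuli to balls mid-argument by adding $e^{\tau\tilde\phi(\ln 2\epsilon)}\epsilon^{-1}\|w\|_{L^2(B_\epsilon^+)}$ to both sides and using the monotonicity of $\tilde\phi$.
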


\begin{rmk}
We stress that, as we are using the growth estimate from Corollary \ref{cor:growth1}, the doubling constant $C_{x_0}>0$ and the radius $r_{x_0}>0$ are \emph{not} uniform in the respective free boundary point. Moreover, they strongly depend on the respective solution $w$.
\end{rmk}

We prove the doubling inequality as a consequence of Corollary \ref{cor:consequence_Carl}.

\begin{proof} 
Without loss of generality we prove the statement of Proposition \ref{prop:doubling} for $x_0 = 0$.
We start from Corollary \ref{cor:consequence_Carl} (using the second left hand side term with $r_1\sim r_2$) and choose $\epsilon>0 $ such that the growth estimate from Corollary \ref{cor:growth1} implies
\begin{align}
\label{eq:lowb}
\left\| w \right\|_{L^2(B_{\epsilon}^+)} \geq \epsilon^{\frac{n+1}{2}+\kappa_0 + \frac{1}{2}}.
\end{align}
Moreover, we set $\tau= \frac{n-1}{2} + \kappa_0 + 1$.
Then, using the notation from Corollary \ref{cor:consequence_Carl}, we obtain
\begin{align*}
e^{(\frac{n-1}{2}+\kappa_0 +1)\tilde{\phi}(\ln(2\epsilon))}\epsilon^{-1} \left\| w \right\|_{L^2(A_{\epsilon,2\epsilon}^+)} &\leq C( e^{(\frac{n-1}{2}+\kappa_0 +1)\tilde{\phi}(\ln(\epsilon/2))} \epsilon^{-1} \left\| w\right\|_{L^2(A_{\epsilon/2,\epsilon}^+)} \\
& \quad + e^{(\frac{n-1}{2}+\kappa_0 +1)\tilde{\phi}(\ln(r_3))}r_3^{-1} \left\| w\right\|_{L^2(A_{r_3, 2r_3}^+)}).
\end{align*}
Adding $e^{(\frac{n-1}{2}+\kappa_0 +1)\tilde{\phi}(2\epsilon)}\epsilon^{-1} \left\| w\right\|_{L^2(B_{\epsilon}^+)}$ to both sides of the inequality and invoking the monotonicity of $\tilde{\phi}$ yields
\begin{equation}
\label{eq:interm}
\begin{split}
e^{(\frac{n-1}{2}+\kappa_0 +1)\tilde{\phi}(\ln(2\epsilon))} \epsilon^{-1} \left\| w \right\|_{L^2(B_{2 \epsilon}^+)} &\leq C( e^{(\frac{n-1}{2}+\kappa_0 +1)\tilde{\phi}(\ln(\epsilon/2))}\epsilon^{-1} \left\| w\right\|_{L^2(B_{\epsilon}^+)} \\
& \quad + e^{(\frac{n-1}{2}+\kappa_0 +1)\tilde{\phi}(\ln(r_3))}r_3^{-1} \left\| w\right\|_{L^2(A_{r_3, 2r_3}^+)}).
\end{split}
\end{equation}
Using that, by the choice of $\epsilon$, the growth estimate (\ref{eq:lowb}) holds, and potentially making $\epsilon$ smaller by requiring $0<\epsilon \leq C \min\{\left\| w\right\|_{L^2(B_1^+)}^{-4},1\} r_3^{4\left(\frac{n+1}{2}+\kappa_0 +1\right)}$, we note that by (\ref{eq:lowb})
\begin{align*}
e^{(\frac{n-1}{2}+\kappa_0 +1)\tilde{\phi}(\ln(r_3))} r_3^{-1} \left\| w\right\|_{L^2(A_{r_3, 2r_3}^+)} \leq \frac{1}{2 C} e^{(\frac{n-1}{2}+\kappa_0 +1)\tilde{\phi}(\ln(2\epsilon))} \epsilon^{-1} \left\| w \right\|_{L^2(B_{2 \epsilon}^+)}.
\end{align*}
Thus, absorbing the second right hand side contribution of (\ref{eq:interm}) into the inequality's left hand side and using the explicit form of $\tilde{\phi}$ once more, results in
\begin{align*}
\left\| w \right\|_{L^2(B_{4\epsilon}^+)}
\leq C\left\|w\right\|_{L^2(B_{ 2\epsilon}^+)}.
\end{align*}
This is the desired doubling inequality for $r_0=\epsilon$. 
\end{proof}

The doubling property now provides sufficient compactness in order to carry out a blow-up procedure:

\begin{prop}[Blow-up limit]
\label{prop:blowup}
Let $w$ be a (non-trivial) solution of (\ref{eq:varcoef}) and let $0\in \Gamma_w$ with $\kappa_0<\infty$. 
Consider the rescaling 
\begin{align*}
w_{\sigma}(x) := \frac{w(\sigma x)}{\sigma^{-\frac{n+1}{2}} \left\| w\right\|_{L^2(B_{\sigma}^{+}(0))}}.
\end{align*}
Then along a sequence $\{ \sigma_j\}_{j\in \N }$,
\begin{align*}
w_{\sigma_j}(x) \rightarrow w_0(x) \mbox{ in } L^2(B_{1}^+) \mbox{ as } \sigma_j \rightarrow 0,
\end{align*}
and $w_0$ is a weak solution of the thin obstacle problem with constant coefficients:
\begin{equation*}
\begin{split}
\D w_0 &= 0 \mbox{ in } B_{1}^+,\\
w_0\geq 0, \ -\p_{n+1} w_0 \geq 0, \ w_0 (\p_{n+1} w_0) &= 0 \mbox{ on } B_{1}'.
\end{split}
\end{equation*}
Moreover, $\left\| w_0 \right\|_{L^2(B_1^+)}=1$; in particular, it is not the trivial function.
\end{prop}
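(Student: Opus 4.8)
The plan is to extract the limit from the doubling estimate of Proposition~\ref{prop:doubling} together with an energy (Caccioppoli) bound, and then to pass to the limit in the variational inequality satisfied by the rescalings $w_\sigma$.

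\emph{Step 1: Normalization and uniform bounds.} A change of variables shows $\|w_\sigma\|_{L^2(B_1^+)}=1$ for every $\sigma\in(0,1)$. Since $0\in\Gamma_w$ has $\kappa_0<\infty$, Proposition~\ref{prop:doubling} applies at $x_0=0$, and rewriting the doubling inequality in terms of $w_\sigma$ gives $\|w_\sigma\|_{L^2(B_2^+)}^2=\|w\|_{L^2(B_{2\sigma}^+)}^2/\|w\|_{L^2(B_\sigma^+)}^2\le C_{0}$ for all $\sigma\le r_{0}$, where $C_0,r_0$ are the doubling constant and radius at $0$. The rescaled function solves $\partial_i a_\sigma^{ij}\partial_j w_\sigma=0$ in $B_2^+$ with the Signorini conditions on $B_2'$, where $a_\sigma^{ij}(x):=a^{ij}(\sigma x)$; testing this equation with $\zeta^2 w_\sigma$ for a radial cutoff $\zeta$ and using that the resulting boundary integral $\int_{B_2'}\zeta^2 w_\sigma\,(a_\sigma^{n+1,j}\partial_j w_\sigma)$ vanishes by (A1) and the complementary conditions (exactly as in the proof of Corollary~\ref{cor:consequence_Carl}) yields a Caccioppoli inequality $\|\nabla w_\sigma\|_{L^2(B_{3/2}^+)}\le C\|w_\sigma\|_{L^2(B_2^+)}\le C$. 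Hence $\{w_\sigma\}_{\sigma\le r_0}$ is bounded in $H^1(B_{3/2}^+)$.

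\emph{Step 2: Extraction of a limit and behaviour of the coefficients.} By weak compactness and Rellich's theorem there are $\sigma_j\to0$ and $w_0\in H^1(B_{3/2}^+)$ with $w_{\sigma_j}\rightharpoonup w_0$ in $H^1(B_{3/2}^+)$, $w_{\sigma_j}\to w_0$ in $L^2(B_{3/2}^+)$, and, by compactness of the trace map $H^1(B_{3/2}^+)\to L^2(B_{3/2}')$, also $w_{\sigma_j}|_{B_{3/2}'}\to w_0|_{B_{3/2}'}$ in $L^2(B_{3/2}')$. Thus $\|w_0\|_{L^2(B_1^+)}=1$, so $w_0\not\equiv0$, and $w_0\ge0$ a.e.\ on $B_{3/2}'$ because this holds for each $w_{\sigma_j}$. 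Moreover, by (A4) and the Morrey bound $|a^{ij}(x)-\delta^{ij}|\le C\|\nabla a^{ij}\|_{L^p(B_1^+)}|x|^\gamma$ with $\gamma=1-\tfrac{n+1}{p}$, one has $\|a_\sigma^{ij}-\delta^{ij}\|_{L^\infty(B_{3/2}^+)}\le C\sigma^\gamma\to0$ (and likewise $\|\nabla a_\sigma^{ij}\|_{L^p(B_{3/2}^+)}=\sigma^{\gamma}\|\nabla a^{ij}\|_{L^p(B_{3\sigma/2}^+)}\to0$), so $a_{\sigma_j}^{ij}\to\delta^{ij}$ uniformly on $B_{3/2}^+$.

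\emph{Step 3: Passing to the limit in the variational inequality.} Fix $\psi\in H^1(B_{3/2}^+)$ with $\psi\ge0$ on $B_{3/2}'$ and a cutoff $\eta\in C_c^\infty(B_{3/2}^+\cup B_{3/2}')$ with $0\le\eta\le1$. The competitor $v=(1-\eta)w_{\sigma_j}+\eta\psi$ is nonnegative on $B_{3/2}'$ and agrees with $w_{\sigma_j}$ near $\partial B_{3/2}^+$, so the variational inequality for $w_{\sigma_j}$ gives $\int_{B_{3/2}^+}a_{\sigma_j}^{ij}\partial_i w_{\sigma_j}\,\partial_j\big(\eta(\psi-w_{\sigma_j})\big)\ge0$. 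Expanding, every term linear in $\nabla w_{\sigma_j}$ converges by a weak$\times$strong pairing (using $a_{\sigma_j}^{ij}\to\delta^{ij}$ uniformly, $\partial_i w_{\sigma_j}\rightharpoonup\partial_i w_0$ in $L^2$, and $w_{\sigma_j}\to w_0$ in $L^2$) to the corresponding term with $\delta^{ij}$ and $w_0$; the single quadratic term $\int\eta\,a_{\sigma_j}^{ij}\partial_i w_{\sigma_j}\partial_j w_{\sigma_j}$ satisfies, by $\eta\ge0$ and weak lower semicontinuity, $\liminf_j\int\eta\,a_{\sigma_j}^{ij}\partial_i w_{\sigma_j}\partial_j w_{\sigma_j}\ge\int\eta|\nabla w_0|^2$. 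These combine to give $\int_{B_{3/2}^+}\nabla w_0\cdot\nabla\big(\eta(\psi-w_0)\big)\ge0$. Since $\psi,\eta$ were arbitrary and every admissible local perturbation of $w_0$ on $B_{3/2}^+$ arises this way, $w_0$ is a local minimizer of $\int|\nabla v|^2$ subject to $v\ge0$ on $B_{3/2}'$, i.e.\ a weak solution of the constant coefficient thin obstacle problem in $B_{3/2}^+\supset B_1^+$; with $\|w_0\|_{L^2(B_1^+)}=1$ this proves the claim.

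\emph{Main obstacle.} The delicate point is Step~3: the energy density $a_{\sigma_j}^{ij}\partial_i w_{\sigma_j}\partial_j w_{\sigma_j}$ converges only weakly, so one must invoke weak lower semicontinuity — which, fortunately, produces the inequality in the direction needed to keep a genuine variational inequality in the limit — while simultaneously verifying that the rescaled coefficients converge to the identity (this is precisely where (A4) and the $W^{1,p}$ bound enter) and that the trace convergence in $L^2(B_{3/2}')$ is strong enough to preserve the constraint $w_0\ge0$ on the thin set.
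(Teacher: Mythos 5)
Your proof is correct and takes essentially the same route as the paper: the doubling estimate of Proposition~\ref{prop:doubling} combined with a Caccioppoli inequality (in which the boundary term vanishes by (A1) and the complementary Signorini conditions) yields a uniform $H^1$ bound for $w_\sigma$, and Rellich then extracts a strongly $L^2$-convergent subsequence with $L^2$-normalized, hence nontrivial, limit. The paper dispatches the identification of the limit problem with the single remark that $a^{ij}(0)=\delta^{ij}$; your Step~3, which passes to the limit in the variational inequality using weak-times-strong pairings together with weak lower semicontinuity of the quadratic term (whose sign fortuitously points the right way), is a careful filling-in of that step rather than a different argument.
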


\begin{proof}
The proof of the $L^2$ convergence relies on a compactness argument using the doubling property as well as the control on the contributions on the boundary $B_1'$. We present the details:
Using the gradient estimate, we obtain 
\begin{align*}
\left\| \nabla w \right\|_{L^2(B_{\sigma}^{+})}^2
\lesssim &\ \frac{1}{\sigma^2}\left\|  w \right\|_{L^2(B_{2\sigma}^{+})}^2 - \int\limits_{ B_{2\sigma}'}  w ( a^{n+1,j}\p_j w) d x,\\
\lesssim &\ \frac{1}{\sigma^2}\left\|  w \right\|_{L^2(B_{\sigma}^{+})}^2 ,
\end{align*}
where the last line is a consequence of the doubling inequality and the boundary conditions of the thin obstacle problem. In effect,
\begin{itemize}
\item $\left\| w_{\sigma}\right\|_{L^2(B_{1}^{+})} = 1$,
\item $\left\|  \nabla w_{\sigma}\right\|_{L^2(B_{1}^{+})} \leq C$.
\end{itemize} 
Hence, (along a not relabeled subsequence) we may pass to the limit $\sigma \rightarrow 0$ and obtain $w_{\sigma} \rightarrow w_0$ strongly in $L^{2}$ via Rellich's compactness theorem. 
Furthermore, as $a^{ij}(0)= \delta^{ij}$, $w_0$ weakly solves
\begin{align*}
\D w_0  & = 0 \mbox{ in } B_{1}^+,\\
w_0\geq 0, \ - \p_{n+1} w_0 \geq 0, \ w_0 (\p_{n+1} w_0) &= 0 \mbox{ on } B_{1}'.
\end{align*}
This proves the proposition.
\end{proof}

\begin{rmk}
\label{rmk:blowup}
Similarly as in Proposition \ref{prop:blowup} it is possible to define blow-up limits at any free boundary point $x_0\in \Gamma_w$:
\begin{align*}
w_{\sigma,x_0}(x) := \frac{w(\sigma (x-x_0))}{\sigma^{-\frac{n+1}{2}} \left\| w\right\|_{L^2(B_{\sigma}^{+}(x_0))}} \rightarrow w_{x_0}(x) \mbox{ in } L^2(B_{1}^+) \mbox{ as } \sigma \rightarrow 0.
\end{align*}
Similar as above, the corresponding blow-up limits are weak solutions of constant coefficient thin obstacle problems (however not necessarily with $a^{ij}= \delta^{ij}$).
\end{rmk}

\subsection{Homogeneous blow-ups}
\label{sec:homo}
In this section, we identify the minimal growth rates of solutions of (\ref{eq:varcoef}) close to free boundary points. For this we show that at each free boundary point there exists a blow-up sequence such that the corresponding blow-up limit is homogeneous (c.f. Proposition \ref{lem:homo}). Here Lemma \ref{lem:almosthom} plays an important role. Moreover, we show that the homogeneity is determined by the order of vanishing. Making use of the existence of homogeneous blow-up solutions as well as the classification of homogeneous global solutions (Proposition \ref{prop:homo2}), we then identify the lowest possible rate of vanishing (c.f. Proposition \ref{prop:homo3} and Corollary \ref{cor:kappa}).\\

The proof of the existence of homogeneous blow-ups is the only point in this section where we rely on the Carleman estimates from Section \ref{sec:Carl} directly (in the form of the modified Carleman estimate from Lemma \ref{lem:varmet1}), instead of only applying Corollary \ref{cor:consequence_Carl}. We have:

\begin{lem}[Almost homogeneity]
\label{lem:almosthom}
Let $w:B_1^+ \rightarrow \R$ be a solution of (\ref{eq:varcoef}) for which the assumptions (A0) and (A2) are satisfied. Let $\kappa_0>0$ be the vanishing order at $0$. 
Let $R_0=R_0(\kappa_0, n,p,\|\nabla a^{ij}\|_{L^p})$ be the radius determined by \eqref{eq:small1} in Lemma~\ref{lem:varmet1}.
Then we have the following statements:
\begin{itemize}
\item[(i)] Let $r_0, r_1$ be two radii with $0<r_0<r_1<R_0$ and let $R:=\ln(r_1/r_0)$. Assume that for some $\epsilon>0$,
\begin{align}\label{eq:poly_nondeg}
\left\| w \right\|_{L^2(B_{ 2r_0}^+)}\geq r_0^{\kappa_0 + \frac{n+1}{2}+ \frac{\epsilon}{64}}.
\end{align}
 Then if $R=R(r_1, \epsilon, n, \kappa_0)$ is large enough, there exists $r\in (r_0,r_1/2)$ such that 
\begin{align*}
\left\| x \cdot \nabla w - \kappa_0  w \right\|_{L^2(A_{r, 2r}^+)} \leq \epsilon \left\| w \right\|_{L^2(A_{r , 2r}^+)}.
\end{align*}
\item[(ii)] There exists a sequence of radii $r_j \rightarrow 0$ and $\epsilon_j\rightarrow 0$ such that
\begin{align*}
 \left\| x \cdot \nabla w - \kappa_0 w \right\|_{L^2(A_{r_{j}, 2 r_{j}}^+)} \leq \epsilon_j \left\| w \right\|_{L^2(A_{r_{j} , 2 r_{j}}^+)} \mbox{ for all } j\in \N.
\end{align*}
\end{itemize}
\end{lem}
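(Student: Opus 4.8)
The plan is to derive both statements from the modified Carleman estimate of Lemma~\ref{lem:varmet1}, reasoning by contradiction: if the almost-homogeneity conclusion failed on \emph{every} dyadic annulus in a long logarithmic range, then the non-homogeneity hypothesis~\eqref{eq:nonhomo} would hold with $\mu=\kappa_0$ throughout that range, and we could feed a suitable cutoff of $w$ into~\eqref{eq:vCarl1} with a weight that is \emph{not} convex but only slightly concave, tuned so that $\tau\tilde\phi'\approx -(\kappa_0+\frac{n-1}{2})$. For part~(i), fix $r_0,r_1$ with $R=\ln(r_1/r_0)$ large, set $\epsilon$ as given, and suppose for contradiction that
\[
\left\| x\cdot\nabla w - \kappa_0 w \right\|_{L^2(A_{r,2r}^+)} > \epsilon \left\| w \right\|_{L^2(A_{r,2r}^+)}
\]
for \emph{all} $r\in(r_0,r_1/2)$. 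In conformal coordinates $t=\ln|x|$ this is exactly condition~\eqref{eq:nonhomo} with $\mu=\kappa_0$ on the range $j\in\{1,\dots,\lfloor R\rfloor-2\}$. Choose $\tilde\phi\in C^4$ satisfying (a)--(c) of Lemma~\ref{lem:varmet1} with $\mu=\kappa_0$, which is possible with $\tilde\phi$ essentially affine of slope $-(\kappa_0+\frac{n-1}{2})/\tau$ (for $\tau$ of order $\kappa_0+n$, small enough that the smallness conditions~\eqref{eq:small1} hold on $(0,R_0)$; this is what forces $R_0=R_0(\kappa_0,n,p,\|\nabla a^{ij}\|_{L^p})$).

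Next I would insert $\tilde w = \eta w$, with $\eta$ a radial cutoff supported in $A_{r_0,r_1}^+$ and equal to $1$ on $A_{2r_0,r_1/2}^+$, into~\eqref{eq:vCarl1}. The commutator errors from $\eta$ live only in the two boundary shells $A_{r_0,2r_0}^+$ and $A_{r_1/2,r_1}^+$ (using $|\nabla\eta|\lesssim r_0^{-1}$, resp.\ $r_1^{-1}$, and Caccioppoli to convert $\nabla w$ into $w$ as in the proof of Corollary~\ref{cor:consequence_Carl}); the coefficient errors $|a^{ij}-\delta^{ij}|$ and $\nabla a^{ij}$ are absorbed directly using the two smallness conditions in~\eqref{eq:small1}, exactly as in the last paragraph of the proof of Lemma~\ref{lem:varmet1}. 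This yields, with $\phi(x)=\tilde\phi(\ln|x|)$,
\[
\left\| e^{\tau\phi}|x|^{-1} w \right\|_{L^2(A_{2r_0,r_1/2}^+)}
\lesssim e^{\tau\tilde\phi(\ln r_0)} r_0^{-1}\left\| w\right\|_{L^2(A_{r_0,2r_0}^+)}
+ e^{\tau\tilde\phi(\ln r_1)} r_1^{-1}\left\| w\right\|_{L^2(A_{r_1/2,r_1}^+)}.
\]
Since $\tau\tilde\phi(t)\approx -(\kappa_0+\frac{n-1}{2})t$, the left side, restricted to a single well-chosen annulus $A_{r,2r}^+$ with $r_0\ll r\ll r_1$, is bounded below by $c\,r^{-(\kappa_0+\frac{n+1}{2})}\|w\|_{L^2(A_{r,2r}^+)}$, and the lower growth bound~\eqref{eq:poly_nondeg} at scale $r_0$ forces this to be large; meanwhile the $r_1$-shell term is a fixed finite quantity and the $r_0$-shell term is, by~\eqref{eq:poly_nondeg} read as a two-sided estimate together with Remark~\ref{rmk:vanishing_order}, controlled. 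Choosing $R$ large enough (so that there is enough room between $r_0$ and $r_1$, and the exponential gain $e^{\tau(\tilde\phi(\ln r)-\tilde\phi(\ln r_0))}$ beats the constants) produces a contradiction. Hence some $r\in(r_0,r_1/2)$ must satisfy the claimed inequality, proving~(i).

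Part~(ii) then follows by iteration: by Proposition~\ref{prop:indep} the vanishing order is a genuine limit, so for each $j$ we may pick $\epsilon_j\to 0$ and a radius scale at which~\eqref{eq:poly_nondeg} holds with $\epsilon=\epsilon_j$ (this is the lower bound in the two-sided growth estimate of Remark~\ref{rmk:growth}), apply~(i) with that $\epsilon_j$ and with $R=R(r_1,\epsilon_j,n,\kappa_0)$ chosen large (shrinking $r_1$, hence the whole window, along the sequence), and extract the resulting radius $r_j$; since the windows can be taken nested and shrinking, $r_j\to 0$. I expect the main obstacle to be the bookkeeping in the absorption step: one must check that the \emph{only} place hypothesis~\eqref{eq:nonhomo} fails to be available is the two boundary shells, that Poincar\'e/Caccioppoli genuinely control those shell contributions by the right-hand side terms appearing in the statement, and — most delicately — that the choice of $\tau$ and of the nearly-affine, mildly concave weight $\tilde\phi$ can be made simultaneously compatible with (a)--(c), with the smallness~\eqref{eq:small1} on a \emph{fixed} $R_0$ independent of the window, and with the requirement that the exponential weight gain across the window dominates all accumulated constants as $R\to\infty$.
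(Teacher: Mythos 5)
You have correctly identified the overall strategy: argue by contradiction, observe that failure of almost-homogeneity on every dyadic annulus in the logarithmic window is exactly hypothesis \eqref{eq:nonhomo} with $\mu=\kappa_0$, and then invoke the modified Carleman estimate from Lemma~\ref{lem:varmet1} with a slightly concave weight. This is indeed the paper's approach. However, there are two substantive gaps in the way you close the argument.

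First, you never specify the weight, and the weight is the crux. The paper takes
\[
\tau \tilde{\phi}(t):= -\Big( \kappa_0 + \tfrac{n-1}{2}-\tfrac{\epsilon }{16}\Big) t + \tfrac{\epsilon}{2R}(t-t_0)(t_1-t),
\]
with $t_0=\ln r_0$, $t_1=\ln r_1$, $\tau=\kappa_0+\frac{n-1}{2}-\frac{\epsilon}{16}$. The parabolic correction is concave of size $\tau\tilde\phi''=-\epsilon/R$, which satisfies (b) precisely when $R$ is large, and it produces a bump of magnitude $\epsilon R/8$ at the geometric mean $\bar t=(t_0+t_1)/2$. That bump is the only source of gain; an ``essentially affine'' weight of slope $\approx -(\kappa_0+\frac{n-1}{2})/\tau$ gives no gain at all, because the weight's decay exactly cancels the expected decay of $\|w\|_{L^2(A^+_{r,2r})}$ and both sides of the three--spheres inequality are then of comparable size.

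Second, and more seriously, your contradiction mechanism does not go through. You propose to bound the left-hand side from below on a middle annulus $A_{r,2r}^+$ using \eqref{eq:poly_nondeg} ``at scale $r_0$''. But \eqref{eq:poly_nondeg} is a lower bound on $\|w\|_{L^2(B^+_{2r_0})}$, not on the $L^2$ mass of a single annulus at scale $r\gg r_0$, and there is no a priori lower bound at $\bar r$ uniform in the window. (Your parenthetical ``read \eqref{eq:poly_nondeg} as a two-sided estimate'' is also not available; the needed upper bounds at the boundary shells come instead from Lemma~\ref{lem:growthimp}.) The paper handles this by a two-step argument: first, the concave-weight Carleman estimate in the form \eqref{eq:consequence_Carl2} yields an \emph{upper} bound
\[
\|w\|_{L^2(A_{\bar r,2\bar r}^+)}< \bar r^{\kappa_0+\frac{n+1}{2}+\frac{\epsilon}{16}},\qquad \bar r=\sqrt{r_0r_1},
\]
for $R$ large, using Lemma~\ref{lem:growthimp} to bound both shell terms. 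Second, this upper bound at $\bar r$ is fed back as the \emph{outer} datum in a fresh application of Corollary~\ref{cor:consequence_Carl} with the ordinary convex weight and radii $r_j\ll r_0<\bar r$, letting $r_j\to0$ kill the inner term; after a summation argument as in Remark~\ref{rmk:vanishing_order}, this gives $\|w\|_{L^2(B^+_{2r_0})}< r_0^{\kappa_0+\frac{n+1}{2}+\frac{\epsilon}{64}}$ for $R$ large, directly contradicting \eqref{eq:poly_nondeg}. In short, the contradiction is reached by propagating smallness from the middle of the window back to $r_0$, not by comparing a lower bound at the middle against the shells. Your sketch of part (ii) is essentially correct once part (i) is in place.
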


\begin{proof}
\emph{Proof of (i):} We argue by contradiction.
Let $t_0:= \ln(r_0)$, $t_1:=\ln(r_1)$ and let $\bar{r}:= \sqrt{r_0 r_1}$ correspond to $\bar{t}:= \frac{t_0+t_1}{2}$. Note that $t_0+R=t_1$. \\
\emph{Claim:} If $R=R(t_1,n, \epsilon, \kappa_0)$ is large enough, then 
\begin{equation}\label{eq:bar_r}
\|w\|_{L^2(A_{\bar r,2\bar r}^+)}<  \bar r^{\kappa_0+\frac{n+1}{2}+\frac{\epsilon}{16}}.
\end{equation}
\emph{Proof of the claim:} Consider the following weight function:
\begin{equation}
\label{eq:weight_concave}
\tau \tilde{\phi}(t):= -\left( \kappa_0 + \frac{n-1}{2}-\frac{\epsilon }{16}\right) t + \frac{\epsilon}{2R}(t-t_0)(t_1-t). 
\end{equation}
We aim at applying this as a (non-convex) Carleman weight in the annulus $A_{r_0, r_1}^+$. For this we have to ensure that the conditions of Lemma \ref{lem:varmet1} are satisfied. As $\tilde{\phi}''''=0$, we only have to ensure conditions (a) and (b). Setting $\tau =  \kappa_0 + \frac{n-1}{2}-\frac{\epsilon}{16}$, condition (b) is satisfied, for $R=R(\kappa_0,n)$ chosen large enough. Condition (a) holds since
\begin{multline*}
\tau \tilde{\phi}'(t) = -\left( \kappa_0 + \frac{n-1}{2}-\frac{\epsilon }{16}\right) + \frac{\epsilon}{2R}(t_1-t)-\frac{\epsilon}{2R}(t-t_0)\\
\in \left[ -\kappa_0-\frac{n-1}{2}-\frac{3\epsilon }{16},   -\kappa_0-\frac{n-1}{2}+\frac{5\epsilon}{16} \right].
\end{multline*}
Thus, the assumptions (a)-(c) of Lemma \ref{lem:varmet1} are verified and Lemma \ref{lem:varmet1} is applicable (in particular in the form of equation \eqref{eq:consequence_Carl2}).
Consequently, we obtain 
\begin{align*}
\left\| e^{\tau \phi} |x|^{-1}w \right\|_{L^2(A_{\bar{r}, 2\bar{r}}^+)} \leq C\left( \left\| e^{\tau \phi}|x|^{-1} w \right\|_{L^2(A_{r_0, 2r_0}^+)} + \left\| e^{\tau \phi} |x|^{-1}w \right\|_{L^2(A_{r_1, 2r_1}^+)} \right),
\end{align*}
where $C=C(n,\epsilon, \kappa_0)$.
Inserting the specific form of the weight and the relation between $r_0,r_1$ (and $t_0, t_1$) then results in
\begin{equation}
\label{eq:nonconvexCarl}
\begin{split}
e^{\tau \tilde{\phi}( \bar t)}\bar r^{-1}\left\| w \right\|_{L^2(A_{\bar{r}, 2\bar{r}}^+)} &\leq C  \left[ e^{- \left( \kappa_0 + \frac{n+1}{2} -\frac{\epsilon}{16} \right)t_0} \left\| w \right\|_{L^2(A_{r_0, 2r_0}^+)} \right. \\
& \quad \quad \quad  \quad \quad \quad \left.
+ e^{- \left( \kappa_0 + \frac{n+1}{2} -\frac{\epsilon}{16} \right)t_1} \left\| w \right\|_{L^2(A_{r_1, 2r_1}^+)} \right].
\end{split}
\end{equation}
By Lemma~\ref{lem:growthimp}, the RHS of \eqref{eq:nonconvexCarl} is bounded. Hence, 
\begin{align*}
\|w\|_{L^2(A_{\bar r,2\bar r}^+)}\leq Ce^{-\tau \tilde{\phi}(\bar t)}\bar r& =Ce^{(\kappa_0+\frac{n+1}{2}-\frac{\epsilon}{16})\bar t -\frac{\epsilon R}{8}}\\
&=Ce^{(\kappa_0+\frac{n+1}{2}+\frac{\epsilon}{16})\bar t - \frac{\epsilon}{8}\bar t-\frac{\epsilon R}{8}}\\
&=Ce^{(\kappa_0+\frac{n+1}{2}+\frac{\epsilon}{16})\bar t -\frac{\epsilon}{8}(t_1-\frac{R}{2})-\frac{\epsilon R}{8}}\\
&=C\bar r^{\kappa_0+\frac{n+1}{2}+\frac{\epsilon}{16}} e^{-\frac{\epsilon}{8}t_1-\frac{\epsilon R}{16}}.
\end{align*}
Thus, if we take $R$ large enough such that
\begin{align*}
Ce^{-\frac{\epsilon}{8}t_1-\frac{\epsilon R}{16}}< 1, \quad \text{i.e. } R>\frac{16\ln C}{\epsilon}-2t_1,
\end{align*}
then we obtain \eqref{eq:bar_r}.

Now we apply Corollary~\ref{cor:consequence_Carl} with $\tau=(\kappa_0+(n-1)/2-\epsilon/64)/(1+c_0\pi/2)$ and the three radii $r_j\ll r_0<\bar r$:
\begin{align*}
&e^{\tau\tilde{\phi}(t_0)}r_0^{-1}|\ln r_0|^{-1}\|w\|_{L^2(A_{r_0,2r_0}^+)}\\
\quad &\leq C \left( r_j^{-(\kappa_0+(n+1)/2-\epsilon/64)}\|w\|_{L^2(A_{r_j,2r_j}^+)}+{\bar r}^{-(\kappa_0+(n+1)/2-\epsilon/64)}\|w\|_{L^2(A_{\bar r, 2\bar r}^+)}\right).
\end{align*}
In the limit $r_j\rightarrow 0$ the first term on the right hand side of the previous inequality vanishes due to Lemma~\ref{lem:growthimp}. Thus, together with \eqref{eq:bar_r} and enlarging the exponent to absorb the logrithmic term, we obtain
\begin{align*}
r_0^{-(\kappa_0+\frac{n+1}{2}-\frac{3}{2}\frac{\epsilon}{64})}\|w\|_{L^2(A_{r_0,2r_0}^+)}&\leq C\bar r^{\frac{5\epsilon}{64}}.
\end{align*}
Using a summation argument as in Remark \ref{rmk:vanishing_order} further yields 
\begin{align*}
r_0^{-(\kappa_0+\frac{n+1}{2}-\frac{3}{2}\frac{\epsilon}{64})}\|w\|_{L^2(B_{2r_0}^+)}&\leq C\bar r^{\frac{5\epsilon}{64}}.
\end{align*}
Recalling $\bar r=\sqrt{r_0r_1}$, implies
\begin{align*}
\|w\|_{L^2(B_{2r_0}^+)}&\leq C r_0^{\kappa_0+\frac{n+1}{2}-\frac{3}{2}\frac{\epsilon}{64}}\sqrt{r_0r_1}^{\frac{5\epsilon}{64}}\\
&< r_0^{\kappa_0+\frac{n+1}{2}+\frac{\epsilon}{64}},
\end{align*}
if $R$ is sufficiently large or $r_1$ is sufficiently small. This is a contradiction with \eqref{eq:poly_nondeg}.

\emph{Proof of (ii):}
Note that for given $\epsilon>0$, \eqref{eq:poly_nondeg} is satisfied for all $r\leq r_\epsilon(w)$. Thus (ii) follows from a contradiction argument and (i).
\end{proof}

Using the result of Lemma \ref{lem:almosthom} then yields the desired homogeneity result along certain subsequences:

\begin{prop}[Homogeneous blow-ups]\label{lem:homo}
Let $w:B_{1}^+ \rightarrow \R$ be a solution of the variable coefficient thin obstacle problem (\ref{eq:varcoef}). Let $x_0 \in \Gamma_w$. Then there exists a sequence of radii $\{r_j\}_{j\in \N}$ with $r_j \rightarrow 0$ as $j\rightarrow \infty$, which depends on $x_0$, i.e. $r_j = r_j(x_0)$, such that the associated ($L^2$-)blow-up sequence $\{w_{r_j,x_0}\}_{j\in \N}$ converges against a homogeneous solution, $w_{x_0}$, of a constant coefficient equation of the type (\ref{eq:constcoef}) (however not necessarily with $a^{ij}= \delta^{ij}$). The homogeneity of $w_{x_0}$ corresponds to the degree of vanishing, $\kappa_{x_0}$, of $w$ at $x_0$. 
\end{prop}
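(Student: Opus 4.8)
The plan is to reduce to $x_0=0$ and then to combine the blow-up procedure of Proposition~\ref{prop:blowup} (in the form of Remark~\ref{rmk:blowup}) with the almost-homogeneity statement of Lemma~\ref{lem:almosthom}(ii). We may assume $\kappa_0:=\kappa_{x_0}<\infty$, since otherwise the doubling inequality of Proposition~\ref{prop:doubling} and the blow-up of Proposition~\ref{prop:blowup} are not available. After a translation (together with the change of variables of Proposition~\ref{prop:change} and a harmless rescaling) we may assume that $x_0=0$, that $w$ solves~\eqref{eq:varcoef} near the origin, and that (by (A1)) the constant matrix $a^{ij}(0)$ is block-diagonal, though in general not the identity matrix --- which is the reason for the wording of the conclusion. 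Note that $\kappa_0>0$, since $w\in C^{1,\alpha}$ vanishes (together with its gradient) at the free boundary point $0$, so that Lemma~\ref{lem:almosthom} applies.

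First I would invoke Lemma~\ref{lem:almosthom}(ii) to produce radii $r_j\to 0$ and $\epsilon_j\to 0$ with
\begin{equation*}
\left\| x\cdot\nabla w-\kappa_0 w\right\|_{L^2(A_{r_j,2r_j}^+)}\le \epsilon_j\left\| w\right\|_{L^2(A_{r_j,2r_j}^+)},
\end{equation*}
and then carry out the blow-up along precisely this sequence, i.e. set $w_{r_j}(x):=w(r_j x)/d_j$ with $d_j:=r_j^{-(n+1)/2}\|w\|_{L^2(B_{r_j}^+)}$. By the doubling inequality of Proposition~\ref{prop:doubling} and the Caccioppoli estimate used in the proof of Proposition~\ref{prop:blowup}, $\{w_{r_j}\}$ is bounded in $H^1(B_R^+)$ for every fixed $R$; hence, along a subsequence, $w_{r_j}\to w_0$ strongly in $L^2(B_R^+)$ and weakly in $H^1(B_R^+)$ for every $R$, with $\|w_0\|_{L^2(B_1^+)}=1$ and $w_0$ a weak solution of a constant coefficient thin obstacle problem of the type~\eqref{eq:constcoef}. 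Using the scaling identities $x\cdot\nabla w_{r_j}(x)=(y\cdot\nabla w)(r_jx)/d_j$ and $w_{r_j}(x)=w(r_jx)/d_j$, the estimate above rescales to
\begin{equation*}
\left\| x\cdot\nabla w_{r_j}-\kappa_0 w_{r_j}\right\|_{L^2(A_{1,2}^+)}\le \epsilon_j\left\| w_{r_j}\right\|_{L^2(A_{1,2}^+)}\le \epsilon_j\left\| w_{r_j}\right\|_{L^2(B_2^+)}\le C\epsilon_j,
\end{equation*}
where the last step uses the doubling bound $\|w_{r_j}\|_{L^2(B_2^+)}\le C\|w_{r_j}\|_{L^2(B_1^+)}=C$. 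Thus $x\cdot\nabla w_{r_j}-\kappa_0 w_{r_j}\to 0$ strongly in $L^2(A_{1,2}^+)$; combining this with the strong $L^2$-convergence $w_{r_j}\to w_0$ and the weak $H^1$-convergence $\nabla w_{r_j}\rightharpoonup\nabla w_0$ (so that $x\cdot\nabla w_{r_j}\rightharpoonup x\cdot\nabla w_0$ in $L^2$), uniqueness of weak limits forces $x\cdot\nabla w_0=\kappa_0 w_0$ a.e. in $A_{1,2}^+$.

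It then remains to upgrade homogeneity on an annulus to homogeneity on all of $B_1^+$. Here I would set $v:=x\cdot\nabla w_0-\kappa_0 w_0$. Since $w_0$ is harmonic in the connected open half-ball $\{x_{n+1}>0\}\cap B_R$ and $\Delta(x\cdot\nabla w_0)=x\cdot\nabla\Delta w_0+2\Delta w_0=0$ there, the function $v$ is harmonic in this set; it is moreover continuous on $B_R^+$, because $w_0$, being a solution of~\eqref{eq:constcoef}, is $C^1$ up to $\{x_{n+1}=0\}$ (\cite{ACS08}). Since $v$ vanishes on the nonempty open subset $(\{x_{n+1}>0\}\cap B_R)\cap A_{1,2}^+$, unique continuation (real-analyticity of harmonic functions on a connected domain) gives $v\equiv 0$ on $\{x_{n+1}>0\}\cap B_R$, hence $v\equiv 0$ on $B_R^+$ by continuity. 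Therefore $x\cdot\nabla w_0=\kappa_0 w_0$ on $B_R^+$, i.e. $w_0$ is homogeneous of degree $\kappa_0$; being nontrivial ($\|w_0\|_{L^2(B_1^+)}=1$), its vanishing order at $0$ equals $\kappa_0=\kappa_{x_0}$, which is the assertion.

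The step I expect to be the main obstacle is the bookkeeping in the second paragraph: one has to perform the blow-up along exactly the radii furnished by Lemma~\ref{lem:almosthom}(ii), check that the almost-homogeneity estimate rescales correctly onto the \emph{fixed} annulus $A_{1,2}^+$, and use doubling to pass from the normalization on $B_1^+$ to control on $B_2^+$. The passage to the limit then needs only weak lower semicontinuity and uniqueness of weak limits, not strong convergence of the gradients, and the final unique continuation argument is soft.
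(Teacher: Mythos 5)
Your proposal is correct and follows essentially the same route as the paper: apply Lemma~\ref{lem:almosthom}(ii) to obtain radii along which the Euler-operator defect is small, perform the $L^2$-normalized blow-up along those radii, pass to the limit to obtain $x\cdot\nabla w_0=\kappa_0 w_0$ on a fixed annulus, and upgrade to global homogeneity via interior real-analyticity. The only differences are cosmetic: you rescale to the annulus $A_{1,2}^+$ and invoke doubling to bound $\|w_{r_j}\|_{L^2(B_2^+)}$, whereas the paper rescales by $2r_j$ so that the annulus lands in $A_{1/2,1}^+\subset B_1^+$ where the normalization already controls the norm; and you pass to the limit via strong $L^2$ plus weak $H^1$ convergence (which is slightly more economical than the paper's invocation of $C^{1,\alpha}_{\mathrm{loc}}$ convergence, and sufficient for the argument).
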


\begin{proof}
Without loss of generality we assume that $x_0$ is the origin. By Lemma \ref{lem:almosthom} (ii) there is a sequence $r_j \rightarrow 0$ such that
\begin{equation*}
\lim_{j\rightarrow \infty} \frac{\|x\cdot \nabla w-\kappa_0w\|_{L^2(A^+_{r_j, 2 r_{j}})}}{\|w\|_{L^2(A^+_{r_j, er_j})}}=0.
\end{equation*}
As $\left\| w \right\|_{L^2(A^+_{r_j, 2 r_{j}})} \leq \left\| w \right\|_{L^2(B_{2 r_j}^+)} $, this condition, in particular, enforces
\begin{equation*}
\lim_{j\rightarrow \infty} \frac{\|x\cdot \nabla w-\kappa_0w\|_{L^2(A^+_{r_j, 2 r_{j}})}}{\|w\|_{L^2(B_{2 r_j}^+)}}=0.
\end{equation*}
In terms of the blow-up sequence $w_{2  r_j}$ the above equation can then be rewritten as 
\begin{equation}\label{eq:homogeneous1}
\lim_{j\rightarrow \infty} \|x\cdot \nabla w_{2 r_{j}}-\kappa _0 w_{2  r_j}\|_{L^2(A_{1/2,1}^+)}=0.
\end{equation}
By Proposition~\ref{prop:blowup}, up to a subsequence, $w_{2 r_j}\rightarrow w_0$ in $C^{1,\alpha}_{loc}(\R^{n+1})$, where $w_0$ is a global solution to the thin obstacle problem \eqref{eq:constcoef}. Hence, \eqref{eq:homogeneous1} implies that 
\begin{equation*}
x\cdot \nabla w_0-\kappa_0 w_0=0\quad \text{ in } A_{1/2,1}^+.
\end{equation*}
Therefore $w_0$ is homogeneous in $A_{1/2,1}^+$. By analyticity (of solutions of the constant coefficient equation in the interior of $\R^{n+1}_+$), $w_0$ is homogeneous in $\R^{n+1}$. 
\end{proof}

Next we discuss the lowest possible homogeneities which appear in the previously described blow-up process. For this we recall (a slight modification of) Proposition 9.9 in \cite{PSU}, which characterizes global homogeneous solutions of (\ref{eq:constcoef}).

\begin{prop}[Proposition 9.9 in \cite{PSU}]
\label{prop:homo2}
Let $w_0$ be a homogeneous global solution of the thin obstacle problem \eqref{eq:constcoef} with $\kappa_0\in (1,2)$.
Then $\kappa_0=3/2$ and
\begin{equation*}
w_0(x)=\Ree(x_{n}+ix_{n+1})^{3/2}
\end{equation*}
up to multiplication by a constant and a rotation in $\R^n$.
\end{prop}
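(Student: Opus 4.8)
The plan is to reduce, by a Federer-type dimension-reduction argument, to the two-dimensional case, which I would then settle by an elementary ODE analysis on the unit circle. Throughout I would use that $\kappa_0\in(1,2)$ forces $w_0$ to be $C^{1,\alpha}$ up to the thin space (Uraltseva's theorem, or assumption~(W)), so that $\nabla w_0$ is continuous, homogeneous of degree $\kappa_0-1\in(0,1)$, and vanishes at the origin. Writing $\Omega:=\{w_0(\cdot,0)>0\}$ and $\Lambda:=\{w_0(\cdot,0)=0\}$, both cones, I would first record that $w_0\not\equiv 0$ forces $\Omega\ne\emptyset$ and $\mathrm{int}\,\Lambda\ne\emptyset$: if $\Lambda$ were all of $\{x_{n+1}=0\}$, the odd reflection of $w_0$ across the thin space would be harmonic and homogeneous of the non-integer degree $\kappa_0$ on $B_1$, hence $\equiv 0$; and if $\mathrm{int}\,\Lambda=\emptyset$, then $\Omega$ is dense, $\partial_{n+1}w_0$ (which vanishes on $\Omega$ by complementarity and is continuous) vanishes on all of $\{x_{n+1}=0\}$, and the even reflection of $w_0$ is again harmonic and homogeneous of non-integer degree, hence $\equiv 0$.

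Next I would dispose of the case $n=1$. Here $w_0=r^{\kappa_0}g(\theta)$ on $[0,\pi]$ with $g''+\kappa_0^2g=0$, so $g(\theta)=A\cos(\kappa_0\theta)+B\sin(\kappa_0\theta)$; by the previous paragraph (after a reflection) I may take $\theta=0\in\Omega$, $\theta=\pi\in\Lambda$. Translating the complementarity conditions into polar coordinates gives $g'(0)=0$, $g(\pi)=0$, $g(0)\ge 0$, $g'(\pi)\ge 0$, whence $B=0$, $A>0$, $\cos(\kappa_0\pi)=0$, so $\kappa_0\in\tfrac12+\mathbb Z$; the constraint $\kappa_0\in(1,2)$ isolates $\kappa_0=3/2$, and then $g'(\pi)=\tfrac32A>0$ holds. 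Thus $w_0=A\,r^{3/2}\cos(\tfrac32\theta)=A\,\Ree(x_1+ix_2)^{3/2}$, the claimed form (a rotation in $\R^1$ being a reflection).

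For $n\ge 2$ I would induct on the dimension. Fixing a free boundary point $y\in\Gamma_{w_0}\setminus\{0\}$, homogeneity puts the whole ray $\R_+y$ into $\Gamma_{w_0}$, and I claim $\nabla w_0(y)=0$: $\partial_{n+1}w_0(y)=0$, since otherwise continuity would give $\partial_{n+1}w_0\ne 0$ on a thin-space neighbourhood of $y$, forcing it into $\Lambda$ by complementarity and contradicting $y\in\partial\Lambda$; and the tangential gradient vanishes at $y$, else $w_0$ would change sign on the thin space near $y$. With $w_0\in C^{1,\alpha}$ this gives $\kappa_y>1$, while $\kappa_y\le\kappa_0<2$ by upper semicontinuity of the vanishing order (Proposition~\ref{prop:semi_cont}) along $\R_+y\to 0$. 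By Proposition~\ref{lem:homo} there is a blow-up of $w_0$ at $y$ that is a homogeneous solution of a constant-coefficient problem \eqref{eq:constcoef} of degree $\kappa_y$, and, as the blow-up of a homogeneous function at a non-vertex point, it is translation-invariant in the (thin) direction $y$; dividing out that direction it becomes a homogeneous global Signorini solution in one ambient dimension less, of degree $\kappa_y\in(1,2)$, which by the induction hypothesis is $\Ree(\,\cdot\,+ix_{n+1})^{3/2}$ up to rotation and constant. Finally I would propagate this rigidity back to $w_0$, concluding that after a rotation in the thin space $w_0$ depends only on $(x_n,x_{n+1})$, at which point the case $n=1$ above applies verbatim.

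The hard part will be this last propagation step — turning the translation invariance of the blow-ups at the non-vertex free boundary points into a genuine translation invariance of $w_0$ (the analogue of Federer's dimension reduction), which requires understanding how the invariance subspace of $w_0$ meets the thin space and the vertex. If one prefers, this step can be bypassed by a spherical-symmetrization (Friedland--Hayman) argument for the mixed Signorini eigenvalue problem on $S^n_+$; the dimension-reduction route has the merit of staying symmetrization-free, in line with the rest of the paper.
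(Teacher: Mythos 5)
Your two-dimensional base case and the preliminary observations (the reflection arguments showing $\Omega\neq\emptyset$ and $\mathrm{int}\,\Lambda\neq\emptyset$, the verification that $\nabla w_0$ vanishes along $\Gamma_{w_0}$ so $\kappa_y>1$, and the use of upper semicontinuity to get $\kappa_y\le\kappa_0<2$) are all sound, and the idea of a Federer-type dimension reduction is a genuinely different route from the one taken in the paper. However, the final step is a real gap, not merely a detail: you never actually supply an argument that converts the \emph{local} rigidity at non-vertex free boundary points (each blow-up there is the $3/2$-homogeneous profile, translation-invariant in the direction $y$) into \emph{global} translation invariance of $w_0$. Knowing that all non-vertex points of $\Gamma_{w_0}$ are regular does not by itself force the cone $\Gamma_{w_0}$ to be a half-hyperplane nor $w_0$ to depend on only two variables; making that deduction rigorous would in effect require a free-boundary regularity theorem for regular points (of the type proved in \cite{ACS08} and in the companion paper \cite{KRSI}), which is far more machinery than this lemma should need and is circular in spirit, since those results themselves rest on classifying the $3/2$-homogeneous blow-up. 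You acknowledge this yourself when you label it ``the hard part'' and offer to bypass it via Friedland--Hayman; but that concession means the dimension-reduction route as written is incomplete, and the fallback is precisely the symmetrization argument the paper was trying to avoid.

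For comparison, the paper's proof stays entirely at the level of one solution and is short: it shows directly that for any tangential direction $e$ the derivative $\p_e w_0$ cannot change sign. If it did, the restriction $f$ of (the even extension of) $\p_e w_0$ to $S^n$ would be a sign-changing eigenfunction of $\D_{S^n}$ on $S^n\setminus\Lambda$ (vanishing on $\Lambda\cap S^n$) with eigenvalue $\lambda=(\kappa_0-1)(\kappa_0+n-2)$. A sign-changing eigenfunction must sit at or above the second eigenvalue, and domain monotonicity gives $\lambda_2(S^n\setminus\Lambda)\ge\lambda_2(S^n)$, which is attained by the coordinate functions of homogeneity $1$; hence $\kappa_0-1\ge 1$, contradicting $\kappa_0<2$. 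Once every tangential derivative has a fixed sign, $w_0$ depends on two variables up to a rotation, and the polar-coordinate analysis you did in the $n=1$ case finishes the argument. The spectral step is the ingredient your proposal is missing; if you want to keep the dimension-reduction flavour, you would need to replace your ``propagation'' sentence with an actual argument, and it is not clear one exists at comparable cost.
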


\begin{rmk}
We remark that the condition $\kappa_0 >1$ is always satisfied if $w_0$ is a blow-up limit, i.e. if there exists a blow-up sequence $w_{r_j}$, where $w$ solves \eqref{eq:varcoef} with $0\in \Gamma_w$, such that $w_{r_j}\rightarrow w_0$ in $C^{1,\alpha}_{loc}(\R^{n+1}_+)$. Indeed, since $0\in \Gamma_{w_{r_j}}$, we infer $w_{r_j}(0)=|\nabla w_{r_j}(0)|=0$ by the Signorini condition and the $C^{1,\alpha}_{loc}$ regularity of $w_{r_j}$. Here the identity $|\nabla' w_{r_j}(0)|=0$ follows by noting that the function $x' \mapsto w(x',0)$ is $C^{1,\alpha}(B_{1/2}')$ regular and attains a minimum at $x'=0$. The vanishing of the normal component of the gradient follows by approaching the free boundary point $0\in \Gamma_{w}$ from the interior of the open set $\Omega_{w_{r_j}}$ in combination with the $C^{1,\alpha}(B_1^+)$ regularity of $w_{r_j}$. Thus, passing to the limit and using the $C^{1,\alpha}_{loc}$ convergence of $w_{r_j}$ to $w_0$, yields $w_0(0)=|\nabla w_0(0)|=0$. This together with the $C^{1,\alpha}$ regularity of $w_0$ implies that $\kappa_0>1$ (more precisely $\kappa_0\geq 1+\alpha$). 
\end{rmk}

\begin{proof}
As the proof of Proposition \ref{prop:homo2} is well-known, we only give a sketch of it. Here we slightly deviate from the strategy presented in \cite{PSU}.\\
We recall that the key in the proof of Proposition 9.9 in \cite{PSU} is to show that for an arbitrary but fixed tangential direction, $e\in S^n\cap\{x_{n+1}=0\}$, the associated directional derivative, $\p_e w_0$, does not change its sign if $\kappa_0\in (1,2)$. This then entails that any solution $w_0$ only depends on two variables, where one is a tangential direction in $S^n\cap\{x_{n+1}=0\}$ and the other is the normal direction, $x_{n+1}$. In \cite{PSU} the sign condition for tangential derivatives is shown by making use of the Alt-Caffarelli-Friedman monotonicity formula \cite{ACF84}. \\
Indeed, this fact can also follow directly from the characterization of the second eigenspace for the Laplace-Beltrami operator on $S^n$: Extending $w_0$ evenly and letting $f$ denote the restriction of $\p_e w_0$ onto $S^n$, $f$ is an eigenfunction (with $\lambda=(\kappa_0-1)(\kappa_0+n-2)$) for $\Delta_{S^n}$ in $S^n\setminus \Lambda$ with $f=0$ on $\Lambda\cap S^n$. Let $0<\lambda_1(S^n\setminus \Lambda)<\lambda_2(S^n\setminus \Lambda)\leq \ldots, $ be the spectrum of this operator. If $\p_e w_0$ changes its sign, then by the variational formulation of the second eigenvalue of $\D_{S^n}$
\begin{equation}\label{eq:eigen}
\lambda\geq \lambda_2(S^n\setminus\Lambda)\geq \lambda_2(S^n).
\end{equation}
Since $\lambda_2(S^n)$ is realized by the affine functions $x_i$ with homogeneity $\kappa=1$, \eqref{eq:eigen} implies that $\kappa_0-1\geq \kappa=1$, i.e. $\kappa_0>2$, which yields a contradiction.
As a consequence, we infer that if $\kappa_0\in (1,2)$, then $w_0$ only depends on two variables, which, up to a rotation, we may assume to be $x_n$ and $x_{n+1}$. Recalling the complete characterization of homogeneous two-dimensional eigenfunctions (c.f. \cite{PSU}) of (\ref{eq:constcoef}), we hence infer that $w_0(x) = \Ree(x_{n}+ix_{n+1})^{3/2}$ up to a multiplicative constant. 
\end{proof}

Combining Proposition~\ref{lem:homo} and Proposition~\ref{prop:homo2} yields the following classification of the blow-up limits at free boundary points with vanishing order (strictly) less than two:

\begin{prop}\label{prop:homo3}
Let $w$ be a solution of \eqref{eq:varcoef} in $B_1^+$. Let $x_0\in B_{1}'$ be a free boundary point with $\kappa_{x_0}\in [0,2)$. Then $\kappa_{x_0}=3/2$ and there exists an ($L^2$) blow-up sequence $w_{r_j,x_0}$ such that $w_{r_j,x_0}\rightarrow w_{x_0}$ with $w_{x_0}(x)=w_{3/2}(B^{-1}(x_0)x)$. Here $w_{3/2}(x)= C_n Re(x\cdot \nu+ix_{n+1})^{3/2}$ for some unit vector $\nu\in S^{n-1}$; $B$ satisfies $A(x)=(a_{ij}(x))=B(x)B^t(x)$ and maps $\{x_{n+1}=0\}$ onto itself. 
\end{prop}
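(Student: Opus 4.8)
The plan is to read off the conclusion from two results already at our disposal — the existence of a homogeneous blow-up at $x_0$ (Proposition \ref{lem:homo}) and the classification of homogeneous global solutions of the \emph{constant} coefficient problem (Proposition \ref{prop:homo2}) — after ``unfreezing'' the frozen geometry $A(x_0)=(a^{ij}(x_0))$ by a linear change of coordinates.

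First I would apply Proposition \ref{lem:homo} at $x_0$ to obtain a sequence $r_j\to 0$ for which the $L^2$-rescalings $w_{r_j,x_0}$ converge to a nontrivial function $w_{x_0}$ with $\|w_{x_0}\|_{L^2(B_1^+)}=1$, which is homogeneous of degree $\kappa_{x_0}$ and solves a constant coefficient thin obstacle problem of the type \eqref{eq:constcoef}. By the construction in Remark \ref{rmk:blowup}, and since $a^{ij}\in W^{1,p}\hookrightarrow C^{0,\gamma}$ gives $a^{ij}(x_0+\sigma\,\cdot)\to a^{ij}(x_0)$ locally uniformly, the constant coefficients of the limiting problem are exactly $A(x_0)$; moreover the convergence $w_{r_j,x_0}\to w_{x_0}$ holds in $C^{1,\alpha}_{loc}$ (the rescaled metrics converge to $A(x_0)$ in $W^{1,p}$ and stay uniformly elliptic, so Uraltseva's estimates apply uniformly, exactly as in the proof of Proposition \ref{lem:homo}). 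Since $x_0\in\Gamma_w$, the $C^{1,\alpha}_{loc}$ convergence together with the Signorini conditions forces $w_{x_0}(0)=|\nabla w_{x_0}(0)|=0$ (the argument in the remark following Proposition \ref{prop:homo2}), and a nontrivial $C^1$ homogeneous function with this property has homogeneity $>1$; combined with the hypothesis $\kappa_{x_0}<2$ we get $\kappa_{x_0}\in(1,2)$.

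Next I would unfreeze the geometry. By the off-diagonal assumption (A1) we have $a^{i,n+1}(x_0)=0$ for $i=1,\dots,n$, so $A(x_0)$ is block diagonal, $A(x_0)=\mathrm{diag}\big(A'(x_0),a^{n+1,n+1}(x_0)\big)$ with $A'(x_0)\in\R^{n\times n}_{sym}$ positive definite and $a^{n+1,n+1}(x_0)>0$ by (A2). Set $B(x_0):=\mathrm{diag}\big((A'(x_0))^{1/2},(a^{n+1,n+1}(x_0))^{1/2}\big)$, so $A(x_0)=B(x_0)B(x_0)^t$, and $B(x_0)$ maps $\{x_{n+1}=0\}$ onto itself, preserves the half-spaces $\{\pm x_{n+1}>0\}$, and acts on the $x_{n+1}$-axis by a positive dilation. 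Then $\tilde w(y):=w_{x_0}(B(x_0)y)$ is harmonic in the interior, is homogeneous of degree $\kappa_{x_0}$ (a linear change of variables preserves homogeneity), is nontrivial, and — since $\p_{n+1}\tilde w(y',0)$ is a positive multiple of $(\p_{n+1}w_{x_0})(B(x_0)(y',0))$ — satisfies the flat Signorini conditions $\tilde w\ge 0$, $-\p_{n+1}\tilde w\ge 0$, $\tilde w\,\p_{n+1}\tilde w=0$ on the thin set; by interior analyticity and homogeneity it is a homogeneous global solution of \eqref{eq:constcoef} with $\kappa_{x_0}\in(1,2)$.

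Proposition \ref{prop:homo2} applied to $\tilde w$ then yields $\kappa_{x_0}=3/2$ and $\tilde w(y)=c\,\Ree(y\cdot\nu+iy_{n+1})^{3/2}$ for some $c\neq 0$ and some unit vector $\nu\in S^{n-1}$ (the admissible rotation in $\R^n$); the normalization $\|w_{x_0}\|_{L^2(B_1^+)}=1$ fixes $c$ in terms of the universal constant $C_n$ and $\det B(x_0)$, i.e. $\tilde w=w_{3/2}$ in the notation of the statement. Undoing the change of variables gives $w_{x_0}(x)=\tilde w(B^{-1}(x_0)x)=w_{3/2}(B^{-1}(x_0)x)$, which is the claim. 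The one genuinely delicate point I would expect is the second step above: verifying that the blow-up limit solves the constant coefficient problem with \emph{precisely} the coefficients $A(x_0)$ and that the blow-up converges strongly enough ($C^{1,\alpha}_{loc}$, not merely $L^2$) to transfer the vanishing of the value and gradient at the free boundary point — both rest on the uniform regularity theory for the suitably rescaled, uniformly elliptic $W^{1,p}$ metrics; everything else is elementary linear algebra together with the citations to Propositions \ref{lem:homo} and \ref{prop:homo2}.
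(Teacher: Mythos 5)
Your proposal is correct and follows the same route the paper indicates (the paper gives no displayed proof; it simply states that Proposition~\ref{prop:homo3} follows by combining Proposition~\ref{lem:homo} with Proposition~\ref{prop:homo2}, and relegates an alternative argument via Friedland--Hayman to Remark~\ref{rem:homoblowup}). You supply exactly the missing glue: apply Proposition~\ref{lem:homo} to get a homogeneous blow-up limit $w_{x_0}$ solving the frozen-coefficient Signorini problem with matrix $A(x_0)$, note that $C^{1,\alpha}_{loc}$ convergence and $x_0\in\Gamma_w$ force $w_{x_0}(0)=|\nabla w_{x_0}(0)|=0$ and hence $\kappa_{x_0}\in(1,2)$, and then use (A1)/(A2) to build the block-diagonal square root $B(x_0)$ of $A(x_0)$ which reduces the frozen problem to the flat one without disturbing the complementarity conditions, so that Proposition~\ref{prop:homo2} applies to $\tilde w = w_{x_0}(B(x_0)\cdot)$. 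The small points you flag (uniform rescaled regularity to upgrade the mode of convergence, and the linear algebra of the half-space-preserving square root) are handled correctly; the only harmless imprecision is that the normalization constant in $w_{3/2}$ depends on $\det B(x_0)$ and is not purely dimensional, but this matches the paper's slightly loose notation.
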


\begin{rmk}
\label{rem:homoblowup}
It is possible to obtain Proposition~\ref{prop:homo3} directly from Proposition~\ref{prop:indep} using the Friedland-Hayman inequality \cite{FH76} as follows (c.f. also \cite{An}):
Without loss of generality let $x_0=0$. Choose $\epsilon>0$ small enough such that $\kappa_0+2\epsilon\in(1,2)$.  By Corollary~\ref{cor:growth1}, there exists $r_0=r_0(\epsilon,w)$ such that
\begin{equation*}
r^{\kappa_0+\epsilon}<\left(\fint_{B_{r}^+}w^2\right)^{1/2}<r^{\kappa_0-\epsilon} \text{ for any }r\in(0,r_0).
\end{equation*}
Then it is not hard to check that for every $R>1$ and $r_jR<r_0$, one has
$$\left(\fint _{B_R^+} w_{r_j}^2\right)^{1/2}\leq R^{\kappa_0+2\epsilon}.$$
This implies that every blow-up limit $w_0$ has a less than quadratic growth rate at infinity, i.e. 
$$\sup _{B_{R/2}^+}|w_0|\leq \left(\fint _{B_R^+} w_{0}^2\right)^{1/2}\leq R^{\kappa_0+2\epsilon}  \text{ for any } R>1.$$
By the $C^{1,\alpha}$ estimates for solutions of (\ref{eq:varcoef}), 
$$\sup_{B_{R/4}^+}|\nabla w_0|\leq R^{\kappa_0 -1 +2\epsilon}  \text{ for any } R>1.$$
Up to an affine change of coordinates, $(\partial_e w)^\pm$ with $e\in \R^n$ are global subharmonic functions with less than linear growth at infinity. By the Friedland-Hayman inequality, this implies that $\partial_e w$ has a sign. The remaining arguments are the same as in Proposition 9.9 of \cite{PSU}.  
\end{rmk}

Proposition \ref{prop:homo3} immediately implies the following corollary:
\begin{cor}
\label{cor:kappa} Let $w:B_1^+ \rightarrow \R$ be a solution of (\ref{eq:varcoef}) and assume that $x_0\in \Gamma_w\cap B_{1}'$. Then $\kappa_{x_0}= \frac{3}{2}$ or $\kappa_{x_0}\geq 2$.
\end{cor}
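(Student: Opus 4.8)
The plan is to derive Corollary~\ref{cor:kappa} as an immediate dichotomy from Proposition~\ref{prop:homo3}. First I would recall that $x_0 \in \Gamma_w$ forces $\kappa_{x_0} > 1$: by the Signorini conditions together with assumption (W) (the $C^{1,\alpha}$ regularity), one has $w(x_0) = 0$ and $|\nabla w(x_0)| = 0$ (the tangential part vanishes because $x' \mapsto w(x',0)$ is $C^{1,\alpha}$ and attains a minimum at $x_0'$, the normal part vanishes by approaching $x_0$ through the interior of $\Omega_w$), so $w$ vanishes at $x_0$ to order at least $1 + \alpha > 1$; hence $\kappa_{x_0} \geq 1 + \alpha > 1$. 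In particular, $\kappa_{x_0} \notin [0,1]$.

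Next I would split into two cases according to whether $\kappa_{x_0} < 2$ or $\kappa_{x_0} \geq 2$. If $\kappa_{x_0} \geq 2$, there is nothing more to prove, so assume $\kappa_{x_0} < 2$. Combined with the lower bound just established, this puts $\kappa_{x_0} \in (1,2)$; a fortiori $\kappa_{x_0} \in [0,2)$, so Proposition~\ref{prop:homo3} applies and yields $\kappa_{x_0} = 3/2$. This exhausts the possibilities: either $\kappa_{x_0} = 3/2$ or $\kappa_{x_0} \geq 2$.

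The only subtlety — and thus the ``main obstacle,'' though it is really a bookkeeping point rather than a genuine difficulty — is ensuring that the hypotheses of Proposition~\ref{prop:homo3} are met, namely that $\kappa_{x_0}$ is finite (so that the blow-up and homogeneity machinery of Propositions~\ref{prop:blowup} and~\ref{lem:homo} is available) and lies in $[0,2)$. Finiteness is automatic in the case $\kappa_{x_0} < 2$, and the reduction to $[0,2)$ uses the free-boundary lower bound $\kappa_{x_0} > 1$ recalled above. One should note that Proposition~\ref{prop:homo3} is itself stated for $\kappa_{x_0} \in [0,2)$, so strictly speaking the intermediate range $[0,1]$ is formally covered by that statement as well, even though it cannot occur at a free boundary point; invoking Proposition~\ref{prop:homo3} directly is therefore legitimate and the case analysis above is complete.

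\begin{proof}
If $\kappa_{x_0} \geq 2$, there is nothing to prove, so assume $\kappa_{x_0} < 2$. Since $x_0 \in \Gamma_w$, the Signorini boundary conditions together with the $C^{1,\alpha}$ regularity of $w$ (assumption (W)) imply $w(x_0) = 0$ and $|\nabla w(x_0)| = 0$: the tangential part of the gradient vanishes because $x' \mapsto w(x',0)$ is $C^{1,\alpha}(B_{1/2}')$ and attains a minimum at $x_0'$, while the normal part vanishes upon approaching $x_0$ through the interior of $\Omega_w$. Hence $w$ vanishes at $x_0$ to order at least $1+\alpha$, so $\kappa_{x_0} \geq 1 + \alpha > 1$; in particular $\kappa_{x_0}$ is finite and $\kappa_{x_0} \in (1,2) \subset [0,2)$. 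Proposition~\ref{prop:homo3} is therefore applicable and yields $\kappa_{x_0} = 3/2$. Combining the two cases, we conclude $\kappa_{x_0} = 3/2$ or $\kappa_{x_0} \geq 2$.
\end{proof}
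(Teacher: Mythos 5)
Your proof is correct and takes the same route as the paper, which derives the corollary as an immediate consequence of Proposition~\ref{prop:homo3} (``Proposition~\ref{prop:homo3} immediately implies the following corollary''). Your additional step establishing $\kappa_{x_0} > 1$ via the Signorini conditions and assumption (W) is exactly the argument the paper records in the remark following Proposition~\ref{prop:homo2}, so you are simply making explicit the bookkeeping needed to check $\kappa_{x_0} \in [0,2)$, which the paper leaves implicit.
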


\subsection{Almost optimal regularity}
In this section we follow a strategy described in the book by Petrosyan, Shahgholian and Uraltseva \cite{PSU} in order to obtain an almost optimal regularity result from our previous estimates: We begin by proving a in the free boundary points \emph{uniform} (almost optimal) $L^{\infty}$ growth estimate and then translate this into the desired (almost optimal) regularity result, c.f. Proposition \ref{prop:almost}.\\

As a corollary of Lemma \ref{lem:growthimp} we obtain the following growth estimate in an $R_0>0$ neighbourhood of the free boundary:

\begin{lem}[Growth at the free-boundary]
\label{lem:freeb_growth}
Let $w$ be a solution of the variable coefficient thin obstacle problem (\ref{eq:varcoef}) in $B_1^+(0)$. Consider $x\in B_{1/2}^+$ with $\dist(x,\Gamma_w)<\frac{1}{2}$. Then
\begin{align*}
 |w(x)| \leq C \dist(x,\Gamma_w)^{3/2 }\ln(\dist(x,\Gamma_w))^2,
\end{align*}
where $C=C(n, \left\|a^{ij}\right\|_{W^{1,p}}, p)$.
\end{lem}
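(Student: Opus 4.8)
The plan is to derive Lemma~\ref{lem:freeb_growth} from the uniform upper growth bound of Lemma~\ref{lem:growthimp} by a standard interior-ball / distance-to-free-boundary argument. Fix $x\in B_{1/2}^+$ with $d:=\dist(x,\Gamma_w)<\tfrac12$ and pick a free boundary point $x_0\in\Gamma_w$ with $|x-x_0|=d$ (such a nearest point exists since $\Gamma_w$ is closed). Applying Lemma~\ref{lem:growthimp} at $x_0$ with a fixed choice $\bar\kappa=3/2$ gives a constant $C=C(n,p,\|\nabla a^{ij}\|_{L^p(B_1^+)})$ and a radius $R_0=R_0(n,p,\|\nabla a^{ij}\|_{L^p(B_1^+)})$ such that
\begin{align*}
\sup_{B_r^+(x_0)}|w|\leq C r^{\min\{\kappa_{x_0},3/2\}}|\ln r|^{2}=C r^{3/2}|\ln r|^{2}
\end{align*}
for all $0<r\leq R_0$, where we used Corollary~\ref{cor:kappa} to conclude $\min\{\kappa_{x_0},3/2\}=3/2$ (indeed $\kappa_{x_0}=3/2$ or $\kappa_{x_0}\geq 2$ for a free boundary point). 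Choosing $r=2d\leq R_0$ (which is legitimate once $d$ is small enough; for $d$ bounded below by a fixed fraction of $R_0$ the estimate is trivial after adjusting the constant using $\|w\|_{L^2(B_1^+)}=1$ and the $L^2$--$L^\infty$ bound) and noting $x\in B_{2d}^+(x_0)$, we get $|w(x)|\leq C(2d)^{3/2}|\ln(2d)|^2\leq C' d^{3/2}|\ln d|^2$, which is the claim.

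The only genuinely delicate point is matching the hypotheses of Lemma~\ref{lem:growthimp}: that lemma is stated for $x_0\in\Gamma_w\cap B_{1/2}$, so I must ensure the nearest free boundary point $x_0$ still lies in $B_{1/2}'$. Since $x\in B_{1/2}^+$ and $d<1/2$, one has $|x_0|\leq |x|+d<1$, so $x_0\in B_1'$; to land inside $B_{1/2}'$ one simply restricts attention to $x$ in a slightly smaller half-ball (say $B_{1/4}^+$ with $d<1/4$) and rescales, or absorbs the mild loss into the constant — the statement as phrased has $x\in B_{1/2}^+$ with $\dist(x,\Gamma_w)<1/2$, and a harmless shrinking of the domain plus covering argument handles the boundary case. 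The other routine ingredient, converting the $L^\infty$ bound on the half-ball to the pointwise bound at $x$, is immediate since $x\in B_{2d}^+(x_0)$.

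I would also remark on the two easy edge cases so the proof reads cleanly. First, if $\dist(x,\Gamma_w)$ is comparable to $R_0$ (bounded away from $0$), then $|w(x)|\leq \sup_{B_{1/2}^+}|w|\leq C(n)\|w\|_{L^2(B_1^+)}=C(n)$ by the $L^2$--$L^\infty$ estimate of Uraltseva~\cite{U87} together with normalization (A0), and $d^{3/2}|\ln d|^2$ is bounded below on that range, so the inequality holds after enlarging $C$. Second, the dependence of the constant is exactly as claimed: $C=C(n,p,\|\nabla a^{ij}\|_{L^p(B_1^+)})$, since $R_0$ and the constant in Lemma~\ref{lem:growthimp} depend only on these quantities (with $\bar\kappa=3/2$ now a fixed universal choice rather than a free parameter), and $\|\nabla a^{ij}\|_{L^p}\leq \|a^{ij}\|_{W^{1,p}}$. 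The main obstacle, such as it is, is purely bookkeeping — making sure $\bar\kappa$ is fixed to $3/2$ and invoking Corollary~\ref{cor:kappa} to kill the $\min$; there is no analytic difficulty beyond what is already contained in Lemma~\ref{lem:growthimp}.
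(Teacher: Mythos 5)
Your proof is correct and follows essentially the same route as the paper's: the paper's proof is a one-sentence citation of Corollary~\ref{cor:kappa}, Lemma~\ref{lem:growthimp} with $\bar\kappa = 3/2$, and the $L^\infty$--$L^2$ estimate, and your write-up simply unwinds those references into explicit steps (nearest free boundary point, choosing $r \sim d$, and handling the edge cases where $d$ is not small or $x_0$ leaves $B_{1/2}'$).
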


\begin{proof}
This is an immediate consequence of the classification of the lowest possible vanishing order at free boundary points (Corollary \ref{cor:kappa}), the $L^{\infty}-L^2$ estimate and the uniformity of Lemma \ref{lem:growthimp} (applied with $\bar{\kappa}= 3/2$) in the free boundary points.
\end{proof}

With the growth estimate at hand we can infer an almost optimal regularity result which then also proves Proposition \ref{prop:almost_opti1} from the introduction:

\begin{prop}[Almost optimal regularity]
\label{prop:almost}
Let $w$ be a solution of (\ref{eq:varcoef}) in $B_{1}^+$.
\begin{itemize}
\item[(i)] If $a^{ij}\in W^{1,p}$ with $p\geq 2(n+1)$, then we have 
\begin{align*}
\left| \nabla w(x^1) - \nabla w(x^2) \right| \leq C \left| x^1-x^2\right|^{1/2}\ln(|x^1-x^2|)^2 
\end{align*}
$\mbox{ for all } x^1,x^2 \in B_{1/2}^+$.
\item[(ii)] If $a^{ij}\in W^{1,p}$ with $p\in (n+1,2(n+1))$, then
$w\in C^{1,\gamma}(B_{1/2}^+ \cup B_{1/2}')$ with
\begin{align*}
\left| \nabla w(x^1) - \nabla w(x^2) \right| \leq C(\gamma) \left| x^1-x^2\right|^{\gamma} \mbox{ for all } x^1,x^2 \in B_{1/2}^+,
\end{align*}
where $\gamma = 1-\frac{n+1}{p}$. 
\end{itemize}
Apart from the dependence on $\gamma$ in the second estimate, the constants $C>0$ in (i) and (ii) are functions of $n,\left\| w \right\|_{L^2(B_1^+)}, \left\|a^{ij}\right\|_{W^{1,p}}, p$.
\end{prop}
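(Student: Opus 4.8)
The plan is to combine the uniform growth estimate at the free boundary (Lemma~\ref{lem:freeb_growth}) with interior elliptic estimates, following the scheme in \cite{PSU}. The key point is that the growth bound $|w(x)| \le C\,\dist(x,\Gamma_w)^{3/2}\ln(\dist(x,\Gamma_w))^2$ controls $w$ not just in $L^\infty$ but, via rescaled Schauder / Calder\'on--Zygmund estimates on balls that do not touch the obstacle, also controls $\nabla w$ and its oscillation. First I would fix $x^1, x^2 \in B_{1/2}^+$ and set $d = |x^1 - x^2|$. I would distinguish two regimes according to how $d$ compares with $\rho := \max\{\dist(x^1,\Gamma_w),\dist(x^2,\Gamma_w)\}$.

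In the regime $d \gtrsim \rho$ (the points are close to the free boundary relative to their separation), I would pick a free boundary point $z \in \Gamma_w$ with $|x^1 - z| = \dist(x^1,\Gamma_w)$, note that $x^1, x^2 \in B_{Cd}^+(z)$, and reduce the estimate to a growth bound for $\nabla w$ on $B_{Cd}^+(z)$. For this I would use that on a half-ball $B_{r}^+(z)$ centered at a free boundary point, after the rescaling $w_{r,z}$ from Remark~\ref{rmk:blowup}, interior and up-to-the-thin-boundary $C^{1,\alpha}$ estimates (Uraltseva, assumption (W)) together with the $L^\infty$ bound $\|w\|_{L^\infty(B_r^+(z))} \le C r^{3/2}\ln(r)^2$ give $\|\nabla w\|_{L^\infty(B_{r/2}^+(z))} \le C r^{1/2}\ln(r)^2$ and, since $\nabla w(z) = 0$, $|\nabla w(x^1) - \nabla w(x^2)| \le |\nabla w(x^1)| + |\nabla w(x^2)| \le C d^{1/2}\ln(d)^2$. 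Here one must be a little careful: $\dist(x^i,\Gamma_w)$ and $d$ may differ, so one takes $r \sim d$ and absorbs the $\ln$ factor, which is where the $\ln(\cdot)^2$ on the right-hand side originates.

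In the regime $d \ll \rho$, say $x^1$ realizes $\rho = \dist(x^1,\Gamma_w)$, the segment $[x^1,x^2]$ lies in $B_{\rho/2}^+(x^1)$, a region where $w$ solves a genuine (two-sided, once one reflects across $\{x_{n+1}=0\}$ using the boundary conditions and the normalization (A1)) uniformly elliptic divergence-form equation with $W^{1,p}$ coefficients and no obstacle. Interior Calder\'on--Zygmund estimates then give $\nabla w \in C^{0,\gamma}$ on $B_{\rho/4}^+(x^1)$ with $\gamma = 1 - \frac{n+1}{p}$ (this is where the dichotomy $p \lessgtr 2(n+1)$ enters), and the rescaled version of this estimate, applied on the ball $B_{\rho/2}^+(x^1)$, combined with the growth bound $\|w\|_{L^\infty(B_{\rho}^+(x^1))} \le C \rho^{3/2}\ln(\rho)^2$, yields
\begin{align*}
|\nabla w(x^1) - \nabla w(x^2)| &\le C \rho^{-1-\gamma}\|w\|_{L^\infty(B_\rho^+(x^1))}\, d^{\gamma} \\
&\le C \rho^{1/2 - \gamma}\ln(\rho)^2\, d^{\gamma}.
\end{align*}
When $p \ge 2(n+1)$ one has $\gamma \ge 1/2$, so $\rho^{1/2-\gamma} \le d^{1/2-\gamma}$ (using $d \le \rho$), giving $|\nabla w(x^1)-\nabla w(x^2)| \le C d^{1/2}\ln(d)^2$ as claimed in (i); when $p < 2(n+1)$ one has $\gamma < 1/2$, so $\rho^{1/2-\gamma} \le C$ and one gets the clean $C^{1,\gamma}$ bound of (ii). The main obstacle is the bookkeeping at the interface between the two regimes and the correct handling of the logarithm: one has to check that the powers of $\rho$ and $d$ combine so that no spurious negative power of $d$ or $\rho$ survives, and that the $\ln$ losses from Lemma~\ref{lem:freeb_growth} propagate exactly to $\ln(|x^1-x^2|)^2$ and not worse. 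The dependence of the constants on $n, \|w\|_{L^2(B_1^+)}, \|a^{ij}\|_{W^{1,p}}, p$ is then inherited from Lemma~\ref{lem:freeb_growth} and the (rescaled) elliptic estimates.
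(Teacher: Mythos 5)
Your proposal is correct and follows essentially the same strategy as the paper's proof: combine the uniform free-boundary growth estimate from Lemma~\ref{lem:freeb_growth} with rescaled interior elliptic estimates (after reflecting across $\{x_{n+1}=0\}$ using (A1)), decomposing into a ``close to the free boundary relative to $|x^1-x^2|$'' regime (bound each gradient separately by $Cd^{1/2}\ln(d)^2$) and a ``far'' regime (apply the scaled $C^{1,\gamma}$ estimate on a ball that avoids $\Gamma_w$). The dichotomy $p\lessgtr 2(n+1)$ enters exactly where you place it, through $\gamma=1-\frac{n+1}{p}$ and the power $\rho^{1/2-\gamma}$.

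The one nonessential difference: in the first regime you estimate on a half-ball $B^+_{Cd}(z)$ centered at a free boundary point $z$, which forces you to invoke the full Signorini $C^{1,\alpha}$ regularity (assumption (W)) since that ball straddles $\Gamma_w$. The paper instead works on $B_{d(x^i)}(x^i)$, which by definition of $d(x^i)$ never meets $\Gamma_w$; after an even or odd reflection one is in a pure interior elliptic setting and only needs interior gradient estimates. This keeps the argument self-contained in the sense of relying on assumption (W) only through the growth estimate, but the dependence of the constants is the same either way, so both routes are fine. You should also note that the paper restricts at the outset to $|x^1-x^2|\leq R_0/8$ (with a trivial ``far-from-the-free-boundary'' Case~1 treated separately); in your bookkeeping the case $\rho\gtrsim R_0$ needs the same trivial treatment, since Lemma~\ref{lem:freeb_growth} is only stated for $\dist(x,\Gamma_w)<1/2$.
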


\begin{rmk}
\begin{itemize}
\item The statement (i) in particular implies that if $a^{ij}\in W^{1,p}$ with $p\geq 2(n+1)$, then for any $\epsilon\in (0,1/2)$ the function $w$ is $C^{1,1/2-\epsilon}(B_{1/2}^+ \cup B_{1/2}')$ regular (with a logarithmic loss as $\epsilon \rightarrow 0$).
\item The constant $C(\gamma)>0$ in (ii) diverges as $\gamma \searrow 0$ and as $\gamma \nearrow \frac{1}{2}$.
\end{itemize}
\end{rmk}

Up to the logarithmic loss, Proposition \ref{prop:almost} provides the optimal regularity for solutions of the thin obstacle problem. This follows, as on the one hand the regularity cannot exceed the regularity predicted by interior regularity estimates with $W^{1,p}$ coefficients. On the other hand $w_{3/2}(x):= \Ree(x_n + i x_{n+1})^{3/2}$ is a solution of the thin obstacle problem. 

\begin{proof}
We begin with the proof of (i) and follow the ideas in \cite{PSU}. For any $x\in B^+_{1/2}$ let $d(x)= \dist(x,\Gamma_w)$. Note that $B_{d(x)}(x)\cap \{x_{n+1}=0\}$ must be fully contained in either $\{w(\cdot, 0)=0\}$ or $\{w(\cdot, 0)>0\}$. 
Considering two points $x_1, x_2 \in B_{1/2}^+$ with $|x^1 -x^2|\leq \frac{R_0}{8}$, where $R_0>0$ is the radius from Lemma \ref{lem:freeb_growth}, we aim at showing
\begin{align}
\label{eq:est}
|\nabla w(x^1)- \nabla w(x^2)| \leq C|x^1-x^2|^{\frac{1}{2}}\ln(|x^1 - x^2|)^2.
\end{align}

For this purpose, we distinguish three cases: 
\begin{itemize}
\item[Case 1:] $d(x^1)\geq \frac{R_0}{4}$. Then $w$ satisfies an elliptic equation with Hölder coefficients in $B_{\frac{1}{4}}(x_1)$ with either Dirichlet or Neumann zero boundary conditions in the whole ball. Reflecting the metric, $a^{ij}$, and the solution, $w$, evenly or oddly with respect to $\{x_{n+1}=0\}$ and using that the assumption $a^{n+1j}(x',0)=0$ for all $j\in \{1,\dots,n\}$, permits us to work with interior elliptic estimates (as the resulting metric remains Hölder continuous). Hence, the desired estimate (\ref{eq:est}) is a direct consequence of this. In particular, the regularity of $w$ is determined by the regularity of the coefficients $a^{ij}$. In this case there is no logarithmic loss.

\item[Case 2:] $d(x^2)\leq d(x^1)\leq \frac{R_0}{4}$ and $|x^1-x^2|\geq \frac{d(x^1)}{2}$. Then $w$ solves an elliptic boundary value problem in the balls $B_{d(x^i)}(x^i)$, $i\in \{1,2\}$ (with either Dirichlet or Neumann data in the whole of  $B_{d(x^i)}(x^i)\cap B_{1}'$). Again we reflect both the solution and the metric evenly or oddly while preserving its Hölder regularity. By Lemma \ref{lem:freeb_growth} we obtain
\begin{align*}
|w(x)|\leq C d(x^i)^{\frac{3}{2}}\ln(d(x^i))^2 \mbox{ in } B_{d(x^i)}(x^i) \mbox{ for } i\in \{1,2\}.
\end{align*}
As a consequence, interior gradient estimates for elliptic equations with Hölder continuous coefficients imply
\begin{align*}
|\nabla w(x^i)| \leq C d(x^i)^{\frac{1}{2}}\ln(d(x^i))^2 \mbox{ for } i\in \{1,2\}.
\end{align*}
Thus,
\begin{align*}
|\nabla w(x^1)- \nabla w(x^2)| &\leq |\nabla w(x^1)| + |\nabla w(x^2)| \leq2C d(x^1)^{\frac{1}{2}}\ln(d(x^1))^2 \\
&\leq C|x^1- x^2|^{\frac{1}{2}}\ln(|x^1-x^2|)^2.
\end{align*}
\item[Case 3:] $d(x^2)\leq d(x^1)\leq \frac{R_0}{4}$ and $|x^1 - x^2|\leq \frac{d(x^1)}{2}$. As before we combine the growth estimate from Lemma \ref{lem:freeb_growth} with interior elliptic regularity estimates (after an appropriate reflection). In fact, from Lemma~\ref{lem:freeb_growth} we know
\begin{align*}
\sup_{B_{d(x^1)}(x^1)}|w(x)| &\leq Cd(x^1)^{\frac{3}{2}}\ln(d(x^1))^2.
\end{align*}
Thus by the interior elliptic regularity estimates 
\begin{align*}
\left\| w\right\|_{C^{1,\alpha}(B_{\frac{d(x^1)}{2}}(x^1))} 
&\leq C d(x^1)^{-1-\alpha}\left\|w \right\|_{L^\infty(B_{d(x^1)}(x^1))}\\
&\leq Cd(x^1)^{\frac{1}{2}-\alpha}\ln(d(x^1))^2,
\end{align*}
for $\alpha \in (0,1)$. We chose $\alpha=\frac{1}{2}$ (which is possible as $p\geq 2(n+1)$) and use $|x^1-x^2|<1$ (combined with the monotone decay of $\ln^2(t)$ for $0<t\leq 1$) to infer
\begin{align*}
|\nabla w(x^1) - \nabla w(x^2)| &\leq \left\| w\right\|_{C^{1,\alpha}(B_{\frac{d(x^1)}{2}}(x^1))}|x^1-x^2|^{\alpha}\\
& \leq C|x^1-x^2|^{\frac{1}{2}}\ln(|x^1-x^2|)^2.
\end{align*}
\end{itemize}
This proves the claimed inequality for (i). \\
For (ii) we argue analogously, noting that we can only apply interior regularity up to a $C^{1,\gamma}$ threshold. In the argument of case 2, we notice that in $B_{1/2}^+$ we have $|x^1-x^2|^{1/2}\ln(|x^1-x^2|)^2 \leq C(\gamma)|x^1-x^2|^{\gamma}$. In case 3 we consider Hölder estimates with $\alpha = \gamma$ and notice that $d(x^1)^{\frac{1}{2}-\gamma}\ln(d(x^1))^2 \leq C(\gamma)$. This completes the proof of Proposition \ref{prop:almost}.
\end{proof}

\bibliography{citations}
\bibliographystyle{alpha}
\end{document}